\documentclass[12pt, reqno, a4paper]{amsart}
\usepackage[utf8]{inputenc}
\usepackage{amsmath,amssymb,amsthm, todonotes,url,mathtools}
\usepackage{hyperref,todonotes}
\usepackage[lmargin=35mm,rmargin=35mm,bmargin=30mm,tmargin=30mm]{geometry}
\usepackage[foot]{amsaddr}
\usepackage{subcaption}
\usepackage[inline]{enumitem}

\newtheorem{theorem}{Theorem}[section]

\newtheorem{lemma}[theorem]{Lemma}
\newtheorem{corollary}[theorem]{Corollary}
\newtheorem{proposition}[theorem]{Proposition}
\newtheorem{observation}[theorem]{Observation}

\newtheorem*{claim*}{Claim}
\newtheorem{problem}[theorem]{Problem}

\makeatletter
\newenvironment{claimproof}[1][\proofname]{\par\pushQED{\hfill$\lozenge$}\normalfont \topsep6\p@\@plus6\p@\relax \trivlist \item\relax{\itshape#1\@addpunct{.}}\hspace\labelsep\ignorespaces}{\popQED\endtrivlist\@endpefalse}
\makeatother

\makeatletter
\newtheorem*{rep@theorem}{\rep@title}
\newcommand{\newreptheorem}[2]{
\newenvironment{rep#1}[1]{
 \def\rep@title{#2 \ref{##1}}
 \begin{rep@theorem}}
 {\end{rep@theorem}}}
\makeatother

\newtheorem{conjecture}[theorem]{Conjecture}
\newtheorem*{conjecture*}{Conjecture}
\newreptheorem{conjecture}{Conjecture}

\theoremstyle{definition}                    

\theoremstyle{remark}   

\newtheorem*{remark*}{Remark}

\newtheorem*{assump*}{\bf{Standard notation}}

\numberwithin{equation}{section}

\newcommand{\floor}[1]{\left\lfloor #1 \right\rfloor}

\DeclareMathOperator{\Span}{span}
\DeclareMathOperator{\ord}{ord}
\newcommand{\F}{\mathbb{F}}
\newcommand{\Z}{\mathbb{Z}}
\newcommand{\N}{\mathbb{N}}
\newcommand{\red}{\mathrm{red}}
\newcommand{\green}{\mathrm{green}}
\newcommand{\blue}{\mathrm{blue}}
\usepackage{dsfont}

\DeclareMathAlphabet{\mathantt}{OT1}{antt}{m}{it}
\newcommand{\e}{\mathantt{e}} 

\newcommand{\dju}{\amalg}

\title{Cyclically covering subspaces in $\mathbb{F}_2^\MakeLowercase{n}$}
\date{}
\author{James Aaronson}
\address{James Aaronson, Mathematical Institute, University of Oxford}
\email{\{aaronson}

\author{Carla Groenland}
\address{Carla Groenland, Mathematical Institute, University of Oxford}
\email{groenland}

\author{Tom Johnston}
\address{Tom Johnston, Mathematical Institute, University of Oxford}
\email{thomas.johnston\}@maths.ox.ac.uk}

\makeatletter
\def\paragraph{\@startsection{paragraph}{4}%
  \z@\z@{-\fontdimen2\font}
  {\normalfont\bfseries}}
\makeatother

\begin{document}

\begin{abstract}
A subspace of $\mathbb{F}_2^n$ is called cyclically covering if every vector in $\mathbb{F}_2^n$ has a cyclic shift which is inside the subspace. Let $h_2(n)$ denote the largest possible codimension of a cyclically covering subspace of $\mathbb{F}_2^n$. We show that $h_2(p)= 2$ for every prime $p$ such that 2 is a primitive root modulo $p$, which, assuming Artin's conjecture, answers a question of Peter Cameron from 1991. We also prove various bounds on $h_2(ab)$ depending on $h_2(a)$ and $h_2(b)$ and extend some of our results to a more general set-up proposed by Cameron, Ellis and Raynaud.
\end{abstract}

\maketitle

\section{Introduction}
Let $q$ be a prime power. For $n \in \N$, let $\{e_0, e_1, \dots, e_{n-1} \}$ be the standard basis for $\F_q^n$. Throughout the paper, the indices of vectors in $\F_q^n$ will be taken modulo $n$ (in particular, we set $e_n = e_0$). Define the \emph{cyclic shift} operator $\sigma:\F_q^n\to \F_q^n$ by
\begin{align*}
\sigma \left(\sum_{i=0}^{n-1} x_i e_i\right) = \sum_{i=0}^{n-1} x_{i} e_{i+1}.
\end{align*}
We say that a subspace $U \leq \mathbb{F}_q^n$ is \emph{cyclically covering} if $\bigcup_{i=0}^{n-1} \sigma^i (U) = \mathbb{F}_q^n$. For any $n \in \mathbb{N}$, let $h_q(n)$ denote the largest possible codimension of a cyclically covering subspace of $\F_q^n$. 

We will be primarily interested in cyclically covering subspaces of $\mathbb{F}_2^n$ and, in particular, the following problem posed (in an equivalent form) by Peter Cameron in 1991 (see \cite[Problem 190]{Cameron94}). 
\begin{problem}
Does $h_2(n)\to \infty$ as $n\to \infty$ over the odd integers or is $h_2(n)=2$ for infinitely many odd $n$?
\end{problem}
We note that for all odd $n \geq 5$, Cameron, Ellis and Raynaud \cite{CameronEllisRaynaud18} give an explicit construction for a cyclically covering subspace with codimension 2, which establishes the lower bound $h_2(n) \geq 2$. 
The motivation for the original formulation came from proving lower bounds for Isbell's conjecture (stated in \cite{CameronFranklKantor89}), and we refer the reader to the paper by Cameron, Ellis and Raynaud \cite{CameronEllisRaynaud18} for further discussion.

Cameron, Ellis and Raynaud \cite{CameronEllisRaynaud18} show that $h_2(mn)\geq \max\{h_2(m),h_2(n)\}$ for $m, n \in \N$. It follows that there are three possibilities for the behaviour of $h_2(n)$ as $n\to \infty$ over the odd integers: \begin{enumerate}
    \item either there is a prime $p > 2$ such that $h_2(p^n)$ is bounded over all $n \in \mathbb{N}$,
    \item or there is some $M \in \mathbb{N}$ such that there are infinitely many primes $p$ with $h_2(p) \leq M$,
    \item or $h_2(n) \to \infty$ over odd integers $n$. 
\end{enumerate}
In Section \ref{sec:multbounds}, we improve the stated lower bound by showing that $h_2(mn)\geq h_2(m)+h_2(n)$ for all $m,n\in \N$, which rules out the first case. The main result of our paper shows that, provided there are infinitely many primes with 2 as a primitive root, the correct case is the second one.
\begin{theorem}\label{thm:main_theorem}
Suppose that $p$ is a prime for which 2 is a primitive root. Then $h_2(p) = 2$.
\end{theorem}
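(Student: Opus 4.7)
The plan is to exploit the decomposition of $\mathbb{F}_2^p$ afforded by the hypothesis. Identifying $\mathbb{F}_2^p$ with $R := \mathbb{F}_2[x]/(x^p-1)$ sends the cyclic shift $\sigma$ to multiplication by $x$. Since $2$ is a primitive root modulo $p$, the polynomial $\Phi_p(x) = 1 + x + \dots + x^{p-1}$ is irreducible over $\mathbb{F}_2$, so the Chinese Remainder Theorem gives $R \cong \mathbb{F}_2 \oplus B$ with $B := \mathbb{F}_2[x]/(\Phi_p) \cong \mathbb{F}_{2^{p-1}}$, and $\sigma$ acts trivially on the $\mathbb{F}_2$-factor and as multiplication by a primitive $p$-th root of unity $\zeta \in B$ on $B$. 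The all-ones vector $\mathbf{1}$ is $\sigma$-fixed, hence must lie in every cyclically covering $U$, which forces the decomposition $U = \mathbb{F}_2 \oplus U_B$ for some $\mathbb{F}_2$-subspace $U_B \leq B$ of the same codimension; the covering condition then becomes $\bigcup_{i=0}^{p-1}\zeta^i U_B = B$, i.e.\ every multiplicative coset of $\langle\zeta\rangle$ in $B^*$ meets $U_B$. Combined with the lower bound $h_2(p)\geq 2$ established by Cameron, Ellis and Raynaud, the theorem thus reduces to showing that no such $U_B$ of codimension $d \geq 3$ can exist.

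For this main step I would write $U_B = \{y \in B : \mathrm{Tr}(c_j y) = 0,\ j = 1, \dots, d\}$ using the trace pairing, with $\mathbb{F}_2$-linearly independent $c_1, \dots, c_d$, and set $W := \langle c_1, \dots, c_d\rangle$. The $\mathbb{F}_2$-linear isomorphism $\psi : B \to V_1$ defined by $\alpha \mapsto (\mathrm{Tr}(\alpha\zeta^i))_i$ identifies $B$ with the even-weight code $V_1 \leq \mathbb{F}_2^p$ and intertwines multiplication by $\zeta$ with the cyclic shift. For each $b \in B$ the quantity $n_0(b) := |\{i : b\zeta^i \in U_B\}|$ equals the number of identically-zero coordinate positions of the $d$-dimensional subcode $\psi(bW) \leq V_1$, and a short double-counting argument yields the identity $n_0(b) = p - W(b)/2^{d-1}$, where $W(b) := \sum_{c \in W \setminus \{0\}}\mathrm{wt}(\psi(cb))$. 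The covering condition is therefore that $\psi(bW)$ has at least one zero column for every $b$; it fails precisely when some $b \in B^*$ realises $W(b) = 2^{d-1}p$, i.e.\ when $\psi(bW)$ has full support.

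The main obstacle is producing such a $b$ whenever $d \geq 3$. Averaging gives only $\mathbb{E}_b W(b) = (2^d-1)p/2$, which falls short of the threshold $2^{d-1}p$ by exactly $p/2$, and a second-moment bound recovers nothing better than the counting restriction $p \geq 2^d$; so the quantitative margin is too tight for off-the-shelf concentration estimates. My plan would be to run inclusion--exclusion on $|B \setminus \bigcup_i\zeta^i U_B| = \sum_{J}(-1)^{|J|}|\bigcap_{i\in J}\zeta^i U_B|$: in the generic regime where the subspaces $\zeta^{-i}W$ sit in general linear position, the intersections have codimensions $\min(d|J|,\,p-1)$ and the alternating sum evaluates, up to boundary corrections, to $2^{p-1}(1-2^{-d})^p = \tfrac{1}{2}((2^d-1)/2^{d-1})^p$, which is positive and indeed exponentially large once $d \geq 3$. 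The technical core would then be to show that any additive dependencies among the translates $\zeta^{-i}W$---which can only \emph{enlarge} the intersection subspaces---preserve the positivity of this alternating sum; here the simplicity of $B$ as an $\mathbb{F}_2[\zeta]$-module, a direct consequence of the irreducibility of $\Phi_p$, should be the key structural input, sharply constraining which dependencies can occur.
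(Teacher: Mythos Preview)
Your setup is correct and attractive: the CRT decomposition $R\cong\F_2\oplus B$, the reduction of the covering condition to $\bigcup_i \zeta^i U_B = B$, the trace description of $U_B$, and the column identity $n_0(b)=p-W(b)/2^{d-1}$ are all valid. But the argument stops precisely where the difficulty begins, and the plan you sketch for the ``technical core'' does not survive a sanity check.

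The inclusion--exclusion heuristic you propose makes no distinction between $d=2$ and $d\ge 3$: your generic estimate $2^{p-1}(1-2^{-d})^p$ is positive for every $d\ge 1$, yet we know (and you invoke) that codimension-$2$ cyclically covering subspaces \emph{do} exist for all odd $p\ge 5$, so the left-hand side of the inclusion--exclusion is exactly $0$ for suitable $U_B$ of codimension $2$. Hence the entire content of the theorem lies in the ``boundary corrections'' and the dependencies among the translates $\zeta^{-i}W$, and these corrections are not perturbative---they wipe out an exponentially large main term when $d=2$. Your remark that dependencies ``can only enlarge the intersection subspaces'' is true but useless for an alternating sum: enlarging the even-$|J|$ terms helps you and enlarging the odd-$|J|$ terms hurts you, and nothing you have said explains why the balance should tip differently for $d=3$ than for $d=2$. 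Invoking the simplicity of $B$ as an $\F_2[\zeta]$-module does not help either, since that structural fact holds identically in the $d=2$ case where your conclusion is false. In short, the proposal identifies the right target but supplies no mechanism that separates $d\ge 3$ from $d=2$.

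For comparison, the paper's proof is completely different in character and never attempts a global counting argument. It first uses the irreducibility of $\Phi_p$ only once, to show that if three independent vectors work together then one of them may be taken to be $\e=(0,1,\dots,1)$. From that point on the argument is combinatorial and works for \emph{arbitrary} primes: working with $\e$ is rephrased as a forbidden-subgraph condition on a Cayley digraph $G_v$ on $\Z/p\Z$; additive combinatorics (Cauchy--Davenport, Vosper, a Freiman-type theorem of Grynkiewicz) forces any non-symmetric $v$ working with $\e$ to have girth $4$ or $6$; and a three-coloured Cayley graph encoding of $(v,w)$ is analysed case-by-case on the girths of $v$, $w$, $v+w$ to locate a bad subgraph. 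The contrast is that the paper exploits the very specific structure of the vector $\e$ and of short cycles in $\Z/p\Z$, whereas your approach treats all codimension-$d$ subspaces on an equal footing---which is exactly why it cannot see the threshold at $d=3$.
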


Artin conjectured that there are infinitely many primes for which 2 is a primitive root; such primes are now known as Artin primes. More generally, Artin's conjecture states that for any $n \in \N$ which is not a perfect square, there are infinitely many primes $p$ such that $n$ is a primitive root modulo $p$. Artin's conjecture is widely believed; in particular, it follows as a consequence of the generalised Riemann hypothesis, as shown by Hooley \cite{Hooley67}.  While there are no values $n$ for which Artin's conjecture is known to hold, Heath-Brown \cite{HeathBrown86} has shown that the conjecture holds for at least one value of $n$ in $\{2,3,5\}$. In fact, he showed that the conjecture can only fail to hold for at most 2 primes.

Rather than considering cyclically covering subspaces directly, we will consider the equivalent problem in the orthogonal complement. We say that a vector $v \in \mathbb{F}_2^n$ \emph{works} if for every $x \in \mathbb{F}_2^n$ there is a $k$ (which we may take to be in $\{0,\dots, n-1\}$) such that $v \cdot \sigma^k x = 0$, and that the vectors $v^{(1)}, v^{(2)}, \dotsb , v^{(m)}$ \emph{work together} if, for every $x \in \mathbb{F}_2^n$, there is a $k \in \{0, \dots, n-1\}$ such that \[ v^{(1)} \cdot \sigma^k x = v^{(2)} \cdot \sigma^k x = \dotsb = v^{(m)} \cdot \sigma^k x = 0 .\] 

Suppose $U$ is a cyclically covering subspace of $\mathbb{F}_2^n$ and let $v^{(1)}, \dots, v^{(m)}$ be a basis for $U^\perp$, the orthogonal complement of $U$. Since $U$ is cyclically covering, for any $x \in \mathbb{F}_2^n$, there exists $u \in U$ and a $k$ such that $x = \sigma^k u$, so, by definition, \[  v^{(i)} \cdot \sigma^{n-k} x = v^{(i)} \cdot u  = 0 \quad \forall i\in[m]\] and the vectors $\{ v^{(1)}, \dots, v^{(m)}\}$ work together. 

Conversely, if we have a set $ V = \{ v^{(1)}, \dots, v^{(m)}\}$ of vectors which work together, then $U = \Span(V)^{\perp}$ is cyclically covering. Indeed, if $x \in \mathbb{F}_2^n$, then by definition there is a $k$ such that \[ v^{(1)} \cdot \sigma^k x = v^{(2)} \cdot \sigma^k x = \dotsb = v^{(m)} \cdot \sigma^k x = 0. \] Hence, $\sigma^k x \in U$, and $x \in \sigma^{n-k} U$, which shows $U$ is cyclically covering. 

This means that $h_2(n)$ is the largest value $m \in \Z$ such that there exist $v^{(1)},\dots,v^{(m)} \in \F_2^n$ which work together and are linearly independent.

In Section \ref{sec:multbounds} we use this formulation to prove the lower bound $h_2(mn) \geq h_2(m) + h_2(n)$, and to give an upper bound of $h_2(2n) \leq 2h_2(n)$. In this formulation, there is a direct correspondence between vectors that work and cyclically covering subspaces with codimension 1, and we give a precise characterisation of these vectors in Theorem \ref{thm:elts_that_work}.

Sections \ref{sec:rootsandconj}, \ref{sec:progress} and \ref{sec:proof} are all devoted to the proof of our main result, Theorem \ref{thm:main_theorem}.  Suppose towards a contradiction that we could find three linearly independent vectors that work together for $p$ a prime with 2 as a primitive root. In Lemma \ref{lem:irreducible} we use the fact that $\F_2[X]/\langle 1+X+\dots+X^{p-1}\rangle$ is a field to show that we can then also find such a triple for which one of the vectors is $\e=(0,1,\dots,1)$. In the remainder of the proof, we show for general primes $p$ that there are no linear independent $\e,v,w\in \F_2^p$ that work together. 

If $\e,v$ and $w$ work together, then in particular $\e$ and $v$ work together. Based on computer experiments, we have a very precise conjecture of which vectors $v$ work with $\e$.
We call a vector $v \in \F_2^n$ \emph{symmetric} if it is of the form $(v_0,v_1,v_2,\dots,v_2,v_1)$ (in other words, $v_i = v_j$ whenever $i \equiv - j \mod n$). 
\begin{conjecture}\label{conj:our_in_intro}
Suppose that $n\in \N$ is an odd number that is not divisible by 7. Let $v \in \F_2^n$, and suppose that $v$ and $\e$ work together. Then $v$ is symmetric.
\end{conjecture}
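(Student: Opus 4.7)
My plan is to assume $v \in \F_2^n$ works together with $\e$ and force $v$ to be symmetric by testing the condition against carefully chosen $x$. The first step is a reformulation: since $\e \cdot \sigma^k x = S(x) + x_{-k}$ where $S(x) = \sum_i x_i \pmod 2$, the pair $(v, \e)$ works together iff for every $x$ there exists $j \in \Z/n\Z$ with $x_j = S(x)$ and $\sum_i v_i x_{i+j} \equiv 0 \pmod 2$. Testing against $x = e_a$ forces $v_0 = 0$, while testing $x = \mathbf{1}$ (which has $S = 1$ since $n$ is odd) forces $v$ to have even weight.

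Next I would extract combinatorial constraints from higher-weight test vectors. The weight-two test $x = e_a + e_{a+d}$ yields: for every $d \neq 0$ there is some $m \notin \{0, -d\}$ with $v_m = v_{m+d}$; summing the $n-2$ equations $v_m + v_{m+d} = 1$ that the negation would require shows that when the condition fails one has $v_d + v_{-d} = 1$, so this constraint is automatic for symmetric $v$ but a genuine condition otherwise. For weight-three $x = e_0 + e_p + e_q$, the condition demands that at least one of $v_p + v_q$, $v_{-p} + v_{q-p}$, $v_{-q} + v_{p-q}$ vanish; these three sum to $0 \pmod 2$ when $v$ is symmetric, but not otherwise. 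The overall strategy is to assume toward a contradiction that $v$ is asymmetric, let $d^\ast$ be the smallest index with $v_{d^\ast} \neq v_{-d^\ast}$, exploit the already-established symmetry below $d^\ast$, and build from it an explicit bad $x$ using carefully chosen weight-three or weight-four tests.

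The hard part, and the reason for the $7 \nmid n$ hypothesis, is that this programme genuinely fails at $n = 7$: the asymmetric $v = e_1 + e_2$ works with $\e$ in $\F_2^7$, because the induced condition ``for every $x$ there exists $j$ with $x_j = S(x)$ and $x_{j+1} = x_{j+2}$'' is satisfied by every $x \in \F_2^7$, even though the analogous test $x = 11010$ already breaks the same condition when $n = 5$. By the bound $h_2(mn) \geq h_2(m) + h_2(n)$ established in Section~\ref{sec:multbounds}, this counterexample propagates to every multiple of $7$, which is exactly why the conjecture excludes such $n$. For admissible $n$ I expect the proof to split according to the factorisation of $J(X) = 1 + X + \cdots + X^{n-1}$ in $\F_2[X]$: when $J$ is irreducible (the setting of Theorem~\ref{thm:main_theorem}), Lemma~\ref{lem:irreducible}-style field-theoretic rigidity should make the witness-construction manageable, whereas the composite case will likely require induction on divisors or a Chinese-remainder-style decomposition combined with an explicit analysis of which asymmetric support patterns are realisable modulo each irreducible factor. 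Producing, for every asymmetric $v$ and every admissible $n$, a computable bad $x$ is the delicate kernel that has kept the statement conjectural.
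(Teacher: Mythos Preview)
The statement you are addressing is Conjecture~\ref{conj:our_in_intro}, and the paper does \emph{not} prove it; it is explicitly left open (``we unfortunately were unable to prove this conjecture''). So there is no paper proof to compare against, and your proposal is not a proof either---as you yourself concede in the final sentence. What you have written is a plan of attack consisting of the elementary observations ($v_0=0$, $|v|$ even, the weight-$2$ and weight-$3$ constraints) followed by a hope that a minimal-asymmetric-index argument will produce a bad $x$. The hope is the entire content of the conjecture; nothing in your outline indicates how to carry it out, and in particular your suggested case split on the factorisation of $1+X+\cdots+X^{n-1}$ is speculative rather than a concrete reduction.

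It is worth noting that the paper's substantial partial progress (Section~\ref{sec:progress}) proceeds along a completely different line from yours. Rather than testing low-weight $x$ directly, the paper reformulates ``$v$ works with $\e$'' as the absence of certain induced subgraphs in the Cayley digraph $G_v$ on $\Z/n\Z$ (Proposition~\ref{prop:graph_theory_formulation}), and then brings in additive combinatorics---Cauchy--Davenport, Vosper, Hamidoune--R{\o}dseth, and a Freiman-type theorem of Grynkiewicz---to constrain the generating set $A=\{i:v_i=1\}$. This yields, for primes $p$, that any small non-symmetric $v$ working with $\e$ must have girth $4$ or $6$ (Proposition~\ref{prop:girth_is_small}), which is what makes the proof of Theorem~\ref{thm:main_theorem} go through. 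None of this structure is visible from your weight-$\le 3$ tests, and conversely those tests do not obviously feed into the digraph picture. If you want to pursue the conjecture, the paper's reformulation as Conjecture~\ref{conj:second_formulation} is probably the more promising entry point.

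One small inaccuracy: the propagation of the $n=7$ counterexample to multiples of $7$ is not a consequence of Theorem~\ref{thm:sum_mult_bound} (which concerns $h_2$, not the specific pair $(\e,v)$). The paper notes in Section~\ref{sec:concl} that the non-symmetric examples at $n=21,35$ arise from the $n=7$ example ``in a systematic way'', but this is a separate, unstated construction.
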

In Lemma \ref{lem:nothing_works_with_sym} we prove that if $\e$, $v$ and $w$ work together and are linearly independent, neither $v$ nor $w$ can be symmetric. Thus, Conjecture \ref{conj:our_in_intro} would imply Theorem \ref{thm:main_theorem}. However, we unfortunately were unable to prove this conjecture.

In Section \ref{sec:progress}, we make partial progress towards the conjecture. To each vector $v\in \F_2^p$, we associate the Cayley digraph $G_v$ on vertex set $\Z/p\Z$ with the set $A$ of indices $i$ with $v_i=1$ as generating set (that is, $(i,j)$ is an arc if and only if $v_{j-i}=1$). This graph is simple\footnote{We say a digraph $(V,E)$ is simple if it has no self-loops and $(i,j)\in E$ if and only if $(j,i)\in E$.} if and only if $v$ is symmetric.
We show in Proposition \ref{prop:graph_theory_formulation} that $\e$ and $v$ work together if and only if $G_v$ has no induced subgraph on an odd number of vertices where each vertex has odd outdegree. This shows Conjecture \ref{conj:our_in_intro} is equivalent to the following conjecture.
\begin{conjecture}\label{conj:second_formulation}
Suppose that $n\in \N$ is an odd number that is not divisible by 7. Let $G$ be a Cayley digraph on $\Z/n\Z$. Then $G$ is a simple graph if and only if $G$ has no odd-sized induced subgraph where each vertex has odd outdegree. 
\end{conjecture}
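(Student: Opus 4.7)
The forward direction is essentially immediate: if $G$ is simple, every induced subgraph $H$ is undirected and loopless, so the sum of outdegrees in $H$ equals twice the number of undirected edges of $H$ and is therefore even, ruling out an odd-sized $H$ with all outdegrees odd.

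For the backward direction I would argue the contrapositive. Let $v\in\F_2^n$ be the indicator of the connection set so that $G = G_v$, and suppose $G$ is not simple. If $v_0 = 1$, then (since $G_v$ is vertex-transitive) every vertex carries a self-loop, and any single vertex is an odd-sized induced subgraph in which the unique vertex has outdegree $1$, which is odd. So assume henceforth that $v_0 = 0$ and that $v_i \neq v_{-i}$ for some $i$. Working in the group algebra $R = \F_2[X]/(X^n - 1)$, and writing $v^*(X) = v(X^{-1})$ and $\chi_S = \sum_{s\in S} X^s$, a short computation shows that the outdegree of $s\in S$ in the induced subgraph $G[S]$ equals the coefficient of $X^s$ in $v^*\chi_S$. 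The task therefore reduces to producing $\chi_S \in R$ of odd Hamming weight such that $(v^* + 1)\chi_S$ has support disjoint from $\mathrm{supp}(\chi_S)$.

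My plan would be to exploit the Chinese Remainder decomposition. Since $n$ is odd, $X^n - 1$ is separable over $\F_2$ and $R$ decomposes as a product of finite fields $R \cong \prod_j \F_{2^{d_j}}$ indexed by the $2$-cyclotomic orbits of $n$-th roots of unity. The involution $X \mapsto X^{-1}$ acts on this product, and by the assumption that $v$ is non-symmetric the images of $v$ and $v^*$ disagree in at least one factor. The parity of the weight of $\chi_S$ corresponds to the image of $\chi_S$ in the trivial factor $\F_2[X]/(X-1) = \F_2$ via the augmentation map, so that coordinate should be set to $1$. In each other factor one can try to use the freedom afforded by invertibility of $v^* + 1$ (if it holds) to push $(v^* + 1)\chi_S$ into the complement of $\mathrm{supp}(\chi_S)$, handling separately the factors in which $v^* + 1$ is zero or a zero-divisor.

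The main obstacle, and presumably the reason the conjecture remains open, is that ``support'' and ``disjointness of supports'' are not algebraic notions and are not respected by the CRT decomposition: specifying the image of $\chi_S$ in each factor tells one essentially nothing about the set of coordinates on which $\chi_S$ and $(v^* + 1)\chi_S$ are non-zero. Bridging this gap --- perhaps by a more combinatorial construction of $S$ from the arc structure of $G_v$, or by an averaging or counting argument over candidate subsets --- is where the real work lies. The exception for $n$ divisible by $7$ is highly suggestive of the structural subtlety involved: over $\F_2$, $\Phi_7$ factors into two Galois-conjugate cubic polynomials which are interchanged by $X \mapsto X^{-1}$, so the corresponding pair of fields is swapped by the involution, producing precisely the kind of asymmetric component in which non-symmetric $v$ can evade the required obstruction. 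Any proof must therefore identify why no analogous asymmetric component can arise when $7 \nmid n$, and control all remaining algebraic components uniformly.
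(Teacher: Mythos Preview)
This statement is a \emph{conjecture} in the paper, not a theorem; the authors write explicitly that they were unable to prove it and list it as open in the concluding section. There is therefore no ``paper's own proof'' to compare against.

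Your forward direction is correct and is exactly what the paper says: it is the Handshaking Lemma (see the remark following Proposition~\ref{prop:graph_theory_formulation}).

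For the backward direction, you correctly dispose of the case $v_0 = 1$, and your reformulation in the group algebra $R = \F_2[X]/(X^n-1)$ is accurate: finding the desired bad subgraph is equivalent to producing some $\chi_S$ of odd weight with $\mathrm{supp}\bigl((v^*+1)\chi_S\bigr)$ disjoint from $\mathrm{supp}(\chi_S)$. But, as you yourself acknowledge, the CRT decomposition gives no control over supports, so the approach stalls and you have not closed the gap. The paper's partial progress (Section~\ref{sec:progress}) is entirely combinatorial: it works directly with the Cayley digraph and uses additive combinatorics (Cauchy--Davenport, Vosper, a Freiman-type theorem of Grynkiewicz) to constrain the girth of $G_v$, rather than proceeding via the group algebra.

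One concrete error in your proposal: your diagnosis of why $7$ is exceptional is wrong. You attribute it to $\Phi_7$ factoring over $\F_2$ into two cubics interchanged by $X \mapsto X^{-1}$. But exactly the same phenomenon occurs for $\Phi_{23}$, which factors into two degree-$11$ irreducibles swapped by $X \mapsto X^{-1}$ (since $-1$ is not a power of $2$ modulo $23$), and likewise for $\Phi_{31}$ and many other $n$. The paper reports that the conjecture has been verified for all odd $n \le 43$ with $7 \nmid n$, including these cases, so whatever makes $7$ special cannot be this algebraic feature. Any genuine proof must isolate something specific to $7$ that does not recur for such $n$.
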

Note that one of the directions follows from the Handshaking Lemma. For non-symmetric $v$, the goal is now to find a `bad' subgraph in the graph $G_v$ in order to show $\e$ and $v$ do not work together. We define the \emph{girth} of $v$ to be the length of the shortest directed cycle in $G_v$. In Proposition \ref{prop:girth_is_small}, we reduce to the case in which the girth of $v$ is $4$ or $6$. For this, we use several additive results. The main observation is that the generating set $A\subseteq\Z/p\Z$ has nice additive properties if $G_v$ has no small cycles. This then allows us to conclude that $A$ is contained in a small arithmetic progression, which gives us enough control on the edges of $G_v$ to find the `bad' subgraph.

Even though we cannot prove Conjecture \ref{conj:our_in_intro}, we make enough progress in Section \ref{sec:progress} to push through the proof of Theorem \ref{thm:main_theorem} in Section \ref{sec:proof} via a case analysis on the girth of the vectors $v,w$ and $v+w$.

\smallskip

In Section \ref{sec:hqn}, we consider the problem of bounding $h_q(n)$. This was studied by Cameron, Ellis and Raynaud \cite{CameronEllisRaynaud18} and, amongst many other results, they show that the lower bound $h_2(mn) \geq \max\{h_2(m), h_2(n)\}$ mentioned earlier holds even when $2$ is replaced by any prime power $q$. We show how our lower bound also holds in $\F_q^n$ and extend the upper bound given in Theorem \ref{thm:mult2} to this setting as well. Finally, we show that $h_q(p) = 0$ when $q$ is a prime and $p > q$ is a prime with $q$ as a primitive root.

Our conjecture, and many other interesting questions outlined in Section \ref{sec:concl}, are left open.

\subsection{Notation}
We use the notation $[n]=\{1,\dots,n\}$. For a vector $x \in \mathbb{F}_q^n$, we denote the $i$th coordinate by $x_i \in \mathbb{F}_q$ and we write $|x|=\sum_{i=1}^n x_i \in \mathbb{F}_q$ for the sum of the coordinates (over $\mathbb{F}_q$). For vectors $v\in \F_q^m$ and $w\in \F_q^n$, we will write $(v,w)\in \F_q^{m+n}$ for their concatenation.

For two sets $A$ and $B$, we use the standard notation $A+B=\{a+b:a\in A,b\in B\}$ and use $nA$ for the sum of $n$ copies of $A$. 

\section{Multiplicative bounds}
\label{sec:multbounds}
Cameron, Ellis and Raynaud \cite[Lemma 3]{CameronEllisRaynaud18} proved that for any $m, n \in \N$, $h_2(mn)\geq \max\{h_2(n),h_2(m)\}$. We offer the following improvement to this lower bound.
\begin{theorem}\label{thm:sum_mult_bound}
For any $m,n \in \N$, 
\[ h_2(mn) \geq h_2(m) + h_2(n) \]
\end{theorem}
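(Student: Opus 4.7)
The plan is to exhibit $a + b$ linearly independent vectors in $\F_2^{mn}$ that work together, where $a = h_2(m)$ and $b = h_2(n)$. Fix linearly independent $u^{(1)}, \ldots, u^{(a)} \in \F_2^m$ and $w^{(1)}, \ldots, w^{(b)} \in \F_2^n$ that work together in their respective spaces, and define two different lifts into $\F_2^{mn}$: let $U^{(i)}_k = u^{(i)}_{k \bmod m}$ be the periodic repetition of $u^{(i)}$, and let $W^{(j)}_k = w^{(j)}_{\lfloor k/m \rfloor}$ be the blocked lift in which each entry of $w^{(j)}$ is repeated $m$ times consecutively. The asymmetry between these two embeddings is what makes the argument go through in the non-coprime case.

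For linear independence, note that the $U^{(i)}$ lie in the subspace of period-$m$ vectors in $\F_2^{mn}$ while the $W^{(j)}$ lie in the subspace of block-constant vectors, and the two subspaces intersect exactly in the line spanned by $\mathbf{1}$. Every element of $\Span(u^{(1)}, \ldots, u^{(a)})$ is itself a vector that works (being a sum of vectors which work together), whereas $\mathbf{1}_m$ does not work (the quantity $\mathbf{1}_m \cdot \sigma^k e_0$ equals $1$ for every $k$); so $\mathbf{1}_m \notin \Span(u^{(1)}, \ldots, u^{(a)})$, and consequently $\mathbf{1}_{mn}$ is not in the span of the $U^{(i)}$. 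The same argument shows $\mathbf{1}_{mn}$ is not in the span of the $W^{(j)}$, so the combined collection of $a + b$ vectors is linearly independent.

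For the working-together property, fix $x \in \F_2^{mn}$ and define $\bar{x} \in \F_2^m$ by $\bar{x}_r = \sum_{s=0}^{n-1} x_{sm + r}$. A direct computation gives $U^{(i)} \cdot \sigma^k x = u^{(i)} \cdot \tau^k \bar{x}$, where $\tau$ is the shift on $\F_2^m$, so the $U^{(i)}$ simultaneously vanish on $\sigma^k x$ precisely when $k \bmod m$ lies in the nonempty set $A(\bar{x}) := \{a' : u^{(i)} \cdot \tau^{a'} \bar{x} = 0 \text{ for all } i\}$. The analogous identity for the blocked vectors, obtained by splitting the shift $k = sm + q$ with $q \in \{0, \ldots, m-1\}$ into a block shift by $s$ and an intra-block shift by $q$, is
\[
W^{(j)} \cdot \sigma^{sm + q} x \;=\; w^{(j)} \cdot \rho^s C^q,
\]
where $\rho$ is the shift on $\F_2^n$ and $C^q \in \F_2^n$ is the sliding-window vector defined by $C^q_t = \sum_{i=0}^{m-1} x_{(tm - q + i) \bmod mn}$.

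With these two identities in hand, fix any $a_0 \in A(\bar{x})$ and apply the working-together property of the $w^{(j)}$ to the vector $C^{a_0} \in \F_2^n$ to obtain some $s_0$ with $w^{(j)} \cdot \rho^{s_0} C^{a_0} = 0$ for all $j$. Then $k := s_0 m + a_0$ satisfies $k \equiv a_0 \pmod{m}$, so every $U^{(i)} \cdot \sigma^k x$ vanishes by the choice of $a_0$, while every $W^{(j)} \cdot \sigma^k x$ vanishes by the displayed identity. The main obstacle is establishing that identity: a shift by $q < m$ causes each summand in $W^{(j)} \cdot x$ to split its support across two adjacent blocks, and one has to track carefully that these contributions reassemble into a single cyclically shifted window of length $m$ starting at position $tm - q$.
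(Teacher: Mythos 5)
Your proposal is correct and follows essentially the same route as the paper: the same two lifts (one factor embedded periodically, the other block-constant, with the roles of $m$ and $n$ swapped relative to the paper), and the same two-stage choice of shift, first fixing the residue of $k$ modulo one factor via a residue-sum vector and then choosing the block shift via a window-sum vector. The only real divergence is in the linear-independence step, where you intersect the period-$m$ and block-constant subspaces and exclude $\mathbf{1}$ by noting that everything in the span of a working family works while $\mathbf{1}$ does not, whereas the paper finds a block on which any common element must vanish; both arguments are valid and of comparable length.
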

Theorem \ref{thm:sum_mult_bound} immediately tells us that $h_2(n_i) \rightarrow \infty$ for many sequences $n_i$.
\begin{corollary}
$h_2(m^n) \to \infty$ as $n \to \infty$ for any $m$ for which $h_2(m)>0$.
\end{corollary}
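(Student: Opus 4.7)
The plan is to deduce this directly from Theorem \ref{thm:sum_mult_bound} by induction on $n$. Writing $m^n = m \cdot m^{n-1}$ and applying the theorem, we obtain
\[
h_2(m^n) \geq h_2(m) + h_2(m^{n-1}).
\]
Iterating (or formally arguing by induction on $n$, with the base case $h_2(m^1) = h_2(m)$), this yields the lower bound $h_2(m^n) \geq n \cdot h_2(m)$.

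Since by assumption $h_2(m) > 0$, i.e.\ $h_2(m) \geq 1$, this gives $h_2(m^n) \geq n \to \infty$ as $n \to \infty$. There is no real obstacle here: the entire content lies in Theorem \ref{thm:sum_mult_bound}, and the corollary is just a one-line iteration plus the observation that $h_2(m)$ is a positive integer. The only (trivial) check is the base case of the induction, for which the $n=1$ inequality is an equality.
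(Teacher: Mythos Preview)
Your proof is correct and matches the paper's argument exactly: the paper simply notes that Theorem \ref{thm:sum_mult_bound} gives $h_2(m^n) \geq n\, h_2(m)$, from which the corollary is immediate.
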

Indeed, Theorem \ref{thm:sum_mult_bound} tells us that $h_2(m^n) \geq nh_2(m)$. Note that the condition $h_2(m)>0$ is not very restrictive: Cameron, Ellis and Raynaud \cite{CameronEllisRaynaud18} proved that $h_2(m)=0$ if and only if $m$ is a power of 2. We will provide an alternative proof of this fact later in this section.

We remark that in the proof Theorem \ref{thm:sum_mult_bound} below, we may replace $2$ by any prime $q$ to give the more general variant stated in Theorem \ref{thm:sum_mult_bound_q}.
\begin{proof}[Proof of Theorem \ref{thm:sum_mult_bound}]
Let $v^{(1)}, \dots, v^{(a)}\in \F_2^m$ be a set of $a = h_2(m)$ linearly independent vectors which work together for $m$, and $w^{(1)}, \dots, w^{(b)} \in\F_2^n$ a set of $b = h_2(n)$ linearly independent vectors which work together for $n$. To prove the theorem we now give a family of $a + b$ linearly independent vectors in $\F_2^{mn}$ which work together for $mn$.

Firstly, for each $i \in [a]$ and $j\in [b]$, define the vectors $\widetilde{v}^{(i)} \in \mathbb{F}_2^{mn}$ and $\widetilde{w}^{(j)} \in \mathbb{F}_2^{mn}$ as 
\begin{align*}
\widetilde{v}^{(i)}&= (\underbrace{v^{(i)}_0,\dots, v^{(i)}_0}_{n \text{ copies}},\dots, \underbrace{v^{(i)}_{m-1},\dots, v^{(i)}_{m-1}}_{n \text{ copies}}),\\
\widetilde{w}^{(j)}&= (\underbrace{w^{(j)},\dots, w^{(j)}}_{m \text{ copies}}).\\
\end{align*}
We claim that the family of vectors $\widetilde{v}^{(1)}, \dots, \widetilde{v}^{(a)}, \widetilde{w}^{(1)}, \dots, \widetilde{w}^{(b)}$ all work together and are linearly independent, which will imply that $h_2(mn) \geq a + b$.

First, we prove that the vectors work together; in other words, we will prove that for any $x \in \F_2^{mn}$, there is a $k\in \Z$ such that $\widetilde{v}^{(i)} \cdot \sigma^k x = \widetilde{w}^{(j)} \cdot \sigma^k x = 0$ for all $i\in [a]$ and $j\in [b]$. 

Suppose $\widetilde{x} \in \mathbb{F}_2^{mn}$. Let $y\in \F_2^n$ be the vector defined by 
\[
y=\left(\sum_{r=0}^{m-1} \widetilde{x}_{rn},\sum_{r=0}^{m-1} \widetilde{x}_{rn+1}, \dots, \sum_{r=0}^{m-1} \widetilde{x}_{rn+n-1}\right).
\]
This means that $\widetilde{w}^{(j)} \cdot \sigma^{k} \widetilde{x} = w^{(j)} \cdot \sigma^{k} y$ for all $k\in \Z$. Since the $w^{(j)}$ work together in $\F_2^n$, there is some choice of $k \in \Z$ such that $w^{(j)} \cdot \sigma^{k}y = 0$ for each $j$, which means that $\widetilde{w}^{(j)} \cdot \sigma^{k} \widetilde{x} = 0$ for each $j$. We may assume that $k=0$ by replacing $\widetilde{x}$ with $\sigma^k\widetilde{x}$.

Let $z \in \F_2^m$ be given by 
\[
z = \left(\widetilde{x}_0+\dots+\widetilde{x}_{n-1},\widetilde{x}_n+\dots+\widetilde{x}_{2n-1},\dots,\widetilde{x}_{mn-n}+\dots+\widetilde{x}_{mn-1} \right).
\]
Then
\[
\widetilde{v}^{(i)} \cdot \sigma^{n\ell} \widetilde{x} = v^{(i)} \cdot \sigma^{\ell} z \text{ for all }i\in[a] \text{ and }\ell \in\Z.
\]
Since $v^{(1)}, \dots, v^{(a)}$ work together for $m$, there is some choice of $\ell \in[m]$ such that $v^{(i)} \cdot \sigma^\ell  z = 0$ for every $i \in [a]$. We conclude
\begin{align*}
     \widetilde{v}^{(i)} \cdot \sigma^{\ell n} \widetilde{x} &= v^{(i)} \cdot \sigma^\ell z = 0 \quad \forall i \in [a],\\
    \widetilde{w}^{(j)} \cdot \sigma^{\ell n} \widetilde{x} &= w^{(j)} \cdot  y = 0\quad \forall j \in [b].
\end{align*}
This shows the vectors $  \widetilde{v}^{(1)}, \dots,   \widetilde{v}^{(a)}, \widetilde{w}^{(1)}, \dots, \widetilde{w}^{(b)}$ all work together, and it only remains to show that the vectors are linearly independent in $\F_2^{mn}$. 

First, observe that the vectors $\widetilde{v}^{(i)} \in \F_2^{mn}$ for $i \in [a]$ are linearly independent because the vectors $v^{(i)} \in \F_2^m$ are linearly independent. Similarly, the family $\left\{\widetilde{w}^{(j)} | j \in [b] \right\}$ is linearly independent because the family $\left\{w^{(j)}| j \in [b]\right\}$ is.

To show that the vectors $\widetilde{v}^{(1)}, \dots,   \widetilde{v}^{(a)}, \widetilde{w}^{(1)}, \dots, \widetilde{w}^{(b)}$ are linearly independent, it suffices to show that any vector that is in the span of the $\widetilde{v}^{(i)}$ and also in the span of the $\widetilde{w}^{(j)}$ is the zero vector. Suppose that $\widetilde{u}$ is such a vector.

By considering the standard basis vector $x=e_0$ in the definition of working together for $v^{(1)}, \dots, v^{(a)}$, it follows that there must be an $r$ such that $v^{(i)}_{r} = 0$ for all $i \in [a]$. In particular, $\widetilde{u}_{rn+s} = 0$ for each $s \in \{0,1,\dots,n-1\}$.

However, the fact that the $w^{(j)}$ are linearly independent means that the only linear combination $\widetilde{w}$ of the $\widetilde{w}^{(j)}$ for which $\widetilde{w}_{rn+s} = 0$ for each $s \in \{0,1,\dots,n-1\}$ is zero. Thus, $\widetilde{u} = 0$, which means that the vectors $\widetilde{v}^{(1)}, \dots, \widetilde{v}^{(a)}, \widetilde{w}^{(1)}, \dots, \widetilde{w}^{(b)}$ are linearly independent, as required.
\end{proof}
We also obtain the following rough upper bound.
\begin{theorem}
\label{thm:mult2}
For any $n \geq 0$, 
\begin{equation}\label{eqn:mult2}
    h_2(2n) \leq 2h_2(n).
\end{equation}
\end{theorem}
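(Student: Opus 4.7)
The plan is to decompose $\mathbb{F}_2^{2n}$ using a canonical $\sigma$-invariant subspace and bound the codimensions of the two resulting pieces by $h_2(n)$ separately. Let $U \leq \mathbb{F}_2^{2n}$ be cyclically covering with codimension $m = h_2(2n)$, and set
\[
F = \ker(\sigma^n - I) = \{v \in \mathbb{F}_2^{2n} : v_i = v_{i+n} \text{ for all } i\} = \{(a,a) : a \in \mathbb{F}_2^n\},
\]
which has dimension $n$ and is preserved by $\sigma$ (since $\sigma$ commutes with $\sigma^n$). I would identify $F \cong \mathbb{F}_2^n$ via $(a,a) \mapsto a$, and $\mathbb{F}_2^{2n}/F \cong \mathbb{F}_2^n$ via $(a,b) + F \mapsto a+b$, and then check that under each of these isomorphisms the induced action of $\sigma$ agrees with the cyclic shift on $\mathbb{F}_2^n$. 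The first is immediate; the second requires a brief computation using the explicit formula $\sigma(a,b) = (\sigma a + \delta e_0, \sigma b + \delta e_0)$ with $\delta = a_{n-1} + b_{n-1}$, noting that over $\mathbb{F}_2$ the sum of the two halves is exactly $\sigma(a+b)$.

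Next I would verify that $U_1 := U \cap F$ is cyclically covering in $F$ and $U_2 := (U+F)/F$ is cyclically covering in $\mathbb{F}_2^{2n}/F$. Both statements follow directly from $U$ being cyclically covering in $\mathbb{F}_2^{2n}$. For $U_1$: given $y \in F$, there is some $k$ with $\sigma^k y \in U$; the $\sigma$-invariance of $F$ forces $\sigma^k y \in F$, hence $\sigma^k y \in U_1$. For $U_2$: given $\bar x \in \mathbb{F}_2^{2n}/F$, lift to $x \in \mathbb{F}_2^{2n}$ and pick $k$ with $\sigma^k x \in U$; then $\sigma^k \bar x = \overline{\sigma^k x}$ lies in $(U+F)/F = U_2$.

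Finally, the standard identity $\dim(U \cap F) + \dim(U + F) = \dim U + \dim F$ translates directly into
\[
\mathrm{codim}_F(U_1) + \mathrm{codim}_{\mathbb{F}_2^{2n}/F}(U_2) = \mathrm{codim}_{\mathbb{F}_2^{2n}}(U) = m.
\]
Since $U_1$ and $U_2$ are cyclically covering in spaces isomorphic, as $\sigma$-modules, to $\mathbb{F}_2^n$, both summands on the left are bounded above by $h_2(n)$, yielding $m \leq 2 h_2(n)$ as required. The only mildly delicate step is confirming that the induced shift on $\mathbb{F}_2^{2n}/F$ really is the cyclic shift on $\mathbb{F}_2^n$; once that identification is in place the rest is routine subspace bookkeeping.
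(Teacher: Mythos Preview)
Your proof is correct, and the underlying decomposition is the same one the paper uses --- splitting via the diagonal $F=\{(a,a)\}$ --- but you work on the primal side (cyclically covering subspaces) while the paper works on the dual side (vectors that work together). Concretely, the paper writes each basis vector of $U^\perp$ as $v^{(i)}=(u^{(i)}+w^{(i)},w^{(i)})$; the $u^{(i)}$ are exactly the images of the $v^{(i)}$ under the map $(c,d)\mapsto c+d$ dual to your inclusion $F\hookrightarrow\mathbb{F}_2^{2n}$, and the $v^{(i)}$ with $u^{(i)}=0$ are exactly $U^\perp\cap F$, dual to your quotient $(U+F)/F$ (note $F^\perp=F$). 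Your packaging is cleaner: the single identity $\dim(U\cap F)+\dim(U+F)=\dim U+\dim F$ replaces the paper's explicit basis manipulation, and the $\sigma$-equivariance checks replace the paper's coordinate computations with test vectors $(y,y)$ and $(0,y)$. The paper's version, on the other hand, stays entirely within the ``working vectors'' language used throughout the rest of the article, so no translation between formulations is needed.
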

\begin{proof}
Suppose $\{v^{(1)}, \dots, v^{(a)}\}\subseteq \mathbb{F}_2^{2n}$ is a collection of $a = h_2(2n)$ linearly independent vectors which work together for $2n$ and write $v^{(i)} = (u^{(i)} + w^{(i)}, w^{(i)})$ for $u^{(i)}, w^{(i)} \in \F_2^n$. The vectors $u^{(1)}, \dots, u^{(a)}\in \F_2^n$ work together. To see this, note that for any $y \in \F_2^n$, there is a $k$ such that \[ u^{(i)} \cdot \sigma^k y = v^{(i)} \cdot \sigma^k (y,y) = 0\] for every $i$.

Without loss of generality $u^{(1)}, \dots, u^{(\ell)}$ is a maximal linearly independent subset of $\{u^{(1)},\dots,u^{(a)}\}$. Then for $j > \ell$ we can find $\lambda_1, \dots, \lambda_\ell$ such that $u^{(j)} = \lambda_1 u^{(1)} + \dotsb + \lambda_\ell u^{(\ell)}$. We may replace $v^{(j)}$ with $\lambda_1 v^{(1)} + \dotsb +\lambda_\ell v^{(\ell)} + v^{(j)}$ without changing the span of the $v^{(i)}$ or the fact that they work together. This will give the vector \[ \left( \lambda_1 w^{(1)} + \dotsb + \lambda_\ell w^{(\ell)} + w^{(j)}, \lambda_1 w^{(1)} + \dotsb + \lambda_\ell w^{(\ell)} + w^{(j)} \right).\]
Doing this for all $j > \ell$, we can assume that $u^{(1)}, \dots, u^{(\ell)}$ are linearly independent (hence $\ell\leq h_2(n)$) and that $u^{(i)}=0$ for $i>\ell$. 

In particular, for each $i > \ell$, we have $v^{(i)} = (w^{(i)}, w^{(i)})$. We claim that the vectors $w^{(i)}$ with $i > \ell$ are linearly independent and work together. This implies that $h_2(2n) - h_2(n) \leq h_2(n)$, which gives the desired result.

The linear independence of the $w^{(i)}$ follows from the linear independence of the $v^{(i)}$. To show that the $w^{(i)}$ work together we must prove that, for any $y \in \F_2^n$, there is some $k$ such that $w^{(i)} \cdot \sigma^k y = 0$ for all $i > \ell$.

Since the $v^{(i)}$ work together, there must exist a $k < 2n$ such that $v^{(i)} \cdot \sigma^k(0,y) = 0$ for each $i > \ell$. But we also have that $v^{(i)} = (w^{(i)}, w^{(i)})$, so
\[ v^{(i)} \cdot \sigma^k y = (w^{(i)},w^{(i)}) \cdot \sigma^k(0,y) = w^{(i)} \cdot \sigma^k y.\]
Thus, for this choice of $k$, we have that $w^{(i)} \cdot \sigma^k y = 0$ for all $i > \ell$, as required.
\end{proof}

It is unlikely that \eqref{eqn:mult2} is tight in general; indeed, if $n$ is such that $h_2(n) > \frac{1}{2} \log_2(n) + 1$, then the tightness of \eqref{eqn:mult2} would contradict the upper bound $h_2(2n) \leq \log_2(n) + 1$. However, we may combine Theorem \ref{thm:mult2} with the fact that $h_2(1) = 0$ to deduce the following.
\begin{corollary}
\label{cor:powersoftwo}
$h_2(2^i)=0$ for every $i\in \N$.
\end{corollary}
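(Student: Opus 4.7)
The plan is a straightforward induction on $i$, leveraging Theorem \ref{thm:mult2} and the trivial base case. The key observation in the hint preceding the corollary already spells out the two ingredients: $h_2(1)=0$ and the doubling inequality $h_2(2n) \leq 2 h_2(n)$.

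For the base case $i=0$, I would argue that $h_2(1)=0$ directly: the space $\F_2^1$ has only two vectors, and the only cyclic shift is the identity, so the only cyclically covering subspace is the whole space $\F_2^1$, which has codimension $0$. (Equivalently, in the ``works together'' formulation, the only nonzero vector $v=1\in \F_2^1$ fails to work since $v\cdot \sigma^k(1) = 1$ for all $k$, so no nonzero vector works.)

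For the inductive step, suppose $h_2(2^i) = 0$. Applying Theorem \ref{thm:mult2} with $n = 2^i$ gives
\[
h_2(2^{i+1}) = h_2(2 \cdot 2^i) \leq 2 h_2(2^i) = 0.
\]
Since $h_2(\cdot)$ is a non-negative integer (being the codimension of a subspace, or equivalently the size of a linearly independent set of vectors that work together, possibly empty), we conclude $h_2(2^{i+1})=0$, completing the induction.

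There is no genuine obstacle here; the only minor care needed is the justification of the base case $h_2(1)=0$, which I would state explicitly rather than appeal to outside, and the observation that $h_2$ takes non-negative integer values so that the inequality $h_2(2^{i+1}) \leq 0$ forces equality.
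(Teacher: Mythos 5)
Your proof is correct and matches the paper's approach exactly: the paper also deduces the corollary by combining Theorem \ref{thm:mult2} with the base case $h_2(1)=0$, iterating the inequality $h_2(2^{i+1})\leq 2h_2(2^i)$. Your explicit justification of the base case and of non-negativity is a reasonable elaboration of what the paper leaves implicit.
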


Let $W(n)$ be the set of $v\in \F_2^n$ which work. The fact that $h_2(2^i)=0$ implies that no non-zero vector can work whenever $n$ is a power of 2 (the zero vector always works). We determine the exact structure of $W(n)$ whenever $n$ is not a power of 2, which in particular implies $h_2(n)\geq 1$ when $n$ is not a power of 2.
\begin{theorem}
\label{thm:elts_that_work}
For $n$ odd, the vectors that work are given by
\[
W(n)=\{v\in \F_2^n:|v| =  0\}.
\]
For $a$ odd and $b\in \N$,
\[
W(a2^b)= \left\{\left(v_0^{(1)},\dots ,v_0^{(2^b)},v_1^{(1)},\dots ,v_1^{(2^b)},\dots ,v_{a-1}^{(1)},\dots ,v_{a-1}^{(2^b)}\right):v^{(1)},\dots,v^{(2^b)}\in W(a)\right\}.
\]
\end{theorem}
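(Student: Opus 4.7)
The plan is to prove the odd case first using the all-ones test vector, then to reduce the general case to it by restricting attention to the subspace of \emph{periodic} test vectors, so that Corollary~\ref{cor:powersoftwo} handles the ``$2^b$-part''.

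For $n$ odd, note that $\sigma^k(1,1,\dots,1) = (1,1,\dots,1)$, so if $v$ works then $v \cdot (1,\dots,1) = |v|$ must vanish. Conversely, if $|v| = 0$ then for any $x \in \F_2^n$,
\[\sum_{k=0}^{n-1} v \cdot \sigma^k x = \sum_i v_i \sum_k x_{i-k} = |v|\,|x| = 0 \text{ in } \F_2,\]
so an even number of the $n$ shifts give inner product $1$; since $n$ is odd, at least one shift gives $0$, and hence $v$ works.

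For $n = a \cdot 2^b$ with $a$ odd, let $H = 2^b\Z/n\Z$ and $C_j = j + H$ for $j \in \{0,\dots,2^b-1\}$ be its cosets. For $v \in \F_2^n$, let $v^{(j+1)} := (v_j, v_{2^b+j}, \dots, v_{(a-1)2^b+j}) \in \F_2^a$ be its restriction to $C_j$, and set $V_j := |v^{(j+1)}| \in \F_2$. By the odd case, $v^{(j+1)} \in W(a)$ iff $V_j = 0$, so the theorem reduces to showing that $v$ works iff $V_j = 0$ for all $j$.

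For the ``if'' direction, restrict to shifts $k = 2^b \ell$ (which preserve each coset): writing $X_j := \sum_{i \in C_j} x_i \in \F_2$, a direct computation gives
\[\sum_{\ell=0}^{a-1} v \cdot \sigma^{2^b\ell} x = \sum_i v_i \sum_\ell x_{i - 2^b\ell} = \sum_{j=0}^{2^b-1} V_j X_j = 0,\]
so (as $a$ is odd) some $\ell$ yields $v \cdot \sigma^{2^b\ell} x = 0$. For ``only if'', test $v$ against the \emph{periodic} vectors $x_i = y_{i \bmod 2^b}$ parametrised by $y \in \F_2^{2^b}$; grouping $i$ by residue mod $2^b$ yields $v \cdot \sigma^k x = V \cdot \sigma^k y$, where the right-hand shift is in $\F_2^{2^b}$ and $V = (V_0,\dots,V_{2^b-1})$. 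Since $v$ works, $V \in W(2^b)$, but $W(2^b) = \{0\}$ by Corollary~\ref{cor:powersoftwo}, so $V = 0$. The key insight is spotting that the periodic subspace of test vectors isolates exactly the ``$2^b$-part'' of the obstruction; both directions are then short calculations.
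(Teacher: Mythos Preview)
Your proof is correct and follows essentially the same approach as the paper's: the odd case is identical, and for $n=a\cdot 2^b$ both arguments use shifts by multiples of $2^b$ for one direction and periodic test vectors (lifting from $\F_2^{2^b}$) together with Corollary~\ref{cor:powersoftwo} for the other. The only superficial difference is that the paper phrases the ``only if'' direction contrapositively (if $V\neq 0$, pick a bad $z\in\F_2^{2^b}$ and lift it), whereas you phrase it directly ($v$ works $\Rightarrow$ $V$ works $\Rightarrow$ $V=0$); the underlying computation $v\cdot\sigma^k x = V\cdot\sigma^{k\bmod 2^b} y$ is the same.
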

\begin{remark*}
Since $W(1) = \{0\}$, this shows $W(2^b) = \{0\}$ for all $b \geq 0$, which agrees with Corollary \ref{cor:powersoftwo}.
\end{remark*}
\begin{proof}
Suppose first that $v \in \F_2^n$ (where $n$ is odd) is a vector that works. There must be some $k$ such that $v \cdot \sigma^k (1,\dots,1) = 0$. However, for every shift $k\in[n]$,
\[
v\cdot \sigma^k (1,\dots,1)=|v|.
\]
Hence any vector which works must satisfy $|v| =  0$. 

Conversely, suppose that $|v| = 0$. Given $x \in \F_2^n$, the number of shifts $k$ with $v \cdot \sigma^k x = 1$ is given by
\[
|\{k:v\cdot \sigma^k x = 1 \mod 2\}| = v\cdot \sum_{k=0}^{n-1}\sigma^k x = |v||x|.
\] 
Since $|v| = 0$, this shows the number of shifts $k$ with $v \cdot \sigma^k x = 1$ is even. Since $n$ is odd,  there are an odd number of shifts $k$ with $v \cdot \sigma^k x = 0$. In particular, there is at least one such $k$, proving the first claim.

Suppose now that $n$ is of the form $a2^b$ where $a$ is odd. Let $v\in W(n)$ be given and split it into components $v^{(1)}, \dots, v^{(2^b)}$ by writing
\begin{equation}
\label{eq:vcomps}
v= (v_0^{(1)},\dots, v_0^{(2^b)},v_1^{(1)},\dots, v_1^{(2^b)},\dots, v_{a-1}^{(1)},\dots, v_{a-1}^{(2^b)}).
\end{equation}

Let $w\in \F_2^{2^b}$ be the vector with coordinates $w_j=|v^{(j)}|$. Suppose that $w\neq 0$.
By Corollary \ref{cor:powersoftwo}, we know that there is some vector $z\in \F_2^{2^b}$ for which all shifts fail, or in other words, such that
\[
w \cdot \sigma^k z = 1 ~~\forall k\in[2^b].
\]
Now consider the vector $x\in\F_2^n$ given by 
\[ x = (\underbrace{z, z, \dots,  z}_{\text{$a$ times}}) = \left( z_1, \dots, z_{2^b}, z_1, \dots, z_{2^b}, \dots, z_1, \dots z_{2^b}\right).\]
Then, noting that $a$ is odd,
\[
v \cdot \sigma^k x  = a(w \cdot \sigma^{k} z)  = 1 \quad \forall k\in[n].
\]
This contradicts the assumption that $v$ works. Hence we find that $w_j=|v^{(j)}|\equiv 0$ for all $j\in [2^b]$, which shows that $v^{(1)},\dots,v^{(2^b)}\in W(a)$.

Conversely, suppose $v$ is given in the form (\ref{eq:vcomps}) for $v^{(1)},\dots,v^{(2^b)}\in W(a)$. Let $x\in \F_2^n$ be arbitrary, and split it into components $x^{(1)},\dots,x^{(2^b)}$ in the same way. We will show that
\[
|\{k\in[a]: v \cdot \sigma^{2^b k}x = 0 \}|
\]
is odd.
Note that 
\begin{align*}
\sum_{k=1}^a v \cdot \sigma^{2^b k}x&= \sum_{k=1}^{a} \left(\sum_{j=1}^{2^b} v^{(j)}\cdot \sigma^{k} x^{(j)}\right)\\
&= \sum_{j=1}^{2^b} \left(\sum_{k=1}^a v^{(j)}\cdot \sigma^{k} x^{(j)}\right)\\
&= \sum_{j=1}^{2^b} |v^{(j)}||x^{(j)}| = 0.
\end{align*}
This shows that the number of $k\in[a]$ for which $v \cdot \sigma^{2^b k}x = 1$ has to be even. Since $a$ is odd, this proves our claim. 
\end{proof}

\section{Primitive roots and a structural conjecture}
\label{sec:rootsandconj}
In order to prove Theorem \ref{thm:main_theorem}, we must first think about the structure of $\F_2^n$. For this, it will be easiest to think about vectors as polynomials, as in \cite{CameronEllisRaynaud18}.

Given a vector $v \in \F_2^n$, let its corresponding polynomial be 
\[
f_v(X) = v_0 + v_1X + v_2X^2 + \dots + v_{n-1}X^{n-1} \in \mathbb{F}_2[X]/\langle X^n - 1 \rangle.
\]
We say a set of polynomials $f_{v^{(1)}}, \dots, f_{v^{(m)}}$ \textit{work together} if and only if, for every vector $x \in \mathbb{F}_2^n$, there is some $k$ such that the coefficient of $X^k$ in $f_{v^{(i)}}f_x$ is $0$ for each $i \in [m]$. The following proposition shows that the two definitions of working together are equivalent, and so we can use them interchangeably. 

\begin{proposition}\label{prop:think_about_polys}
The vectors $v^{(1)}, \dots, v^{(m)}$ work together if and only if the corresponding polynomials $f_{v^{(1)}}, \dots, f_{v^{(m)}}$ work together. \end{proposition}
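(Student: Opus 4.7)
The plan is to translate the inner product $v\cdot\sigma^kx$ directly into a coefficient of a product of polynomials and then exploit a coordinate-reversal bijection on $\F_2^n$ so that the two quantifications ``for every $x$, there exists $k$'' match up.

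First I would compute both sides explicitly. Since $\sigma$ shifts indices by $+1$, we have $(\sigma^k x)_i = x_{i-k}$, so
\[
v\cdot \sigma^k x = \sum_{i=0}^{n-1} v_i\, x_{i-k}.
\]
On the other hand, multiplying out in $\F_2[X]/\langle X^n-1\rangle$, the coefficient of $X^k$ in $f_v(X)f_x(X)$ is
\[
[X^k]\bigl(f_v f_x\bigr) \;=\; \sum_{i+j\equiv k \pmod n} v_i x_j \;=\; \sum_{i=0}^{n-1} v_i x_{k-i}.
\]
These two sums look alike but are \emph{not} identical: one has $x_{i-k}$, the other $x_{k-i}$. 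I would record this cleanly by introducing the reversal $\tilde x \in \F_2^n$ defined by $\tilde x_j = x_{-j}$ (indices mod $n$). Then $x_{k-i} = \tilde x_{i-k}$, and therefore
\[
[X^k]\bigl(f_v f_x\bigr) \;=\; v\cdot \sigma^k \tilde x.
\]

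Next, I would use the fact that the map $x\mapsto \tilde x$ is a bijection on $\F_2^n$ (it is an involution). Assume the polynomials $f_{v^{(1)}},\dots,f_{v^{(m)}}$ work together and let $x\in\F_2^n$ be arbitrary. Applying the polynomial condition to $\tilde x$ yields some $k$ with $[X^k](f_{v^{(i)}}f_{\tilde x})=0$ for every $i$, which by the identity above is exactly $v^{(i)}\cdot\sigma^k x=0$ for every $i$; hence the vectors work together. For the converse, given $x$, apply the vector condition to $\tilde x$ and use $v^{(i)}\cdot\sigma^k \tilde x = [X^k](f_{v^{(i)}}f_x)$ to obtain the required $k$.

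The only real obstacle is bookkeeping with index conventions: since the paper defines $\sigma$ to shift $e_i$ to $e_{i+1}$ (so indices of coordinates decrease), one must insert the reversal $\tilde x$ to reconcile the convolution in $f_vf_x$ with the dot product. Once this identification is made, the equivalence is immediate from the fact that reversal permutes $\F_2^n$. I would present the computation as a one-line identity and then give the two short implications above, without needing any further machinery.
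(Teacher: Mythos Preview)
Your proposal is correct and is essentially identical to the paper's proof: the paper also computes that the coefficient of $X^k$ in $f_v f_x$ equals $v\cdot\sigma^k(\text{rev}(x))$ (their $\text{rev}(x)$ is your $\tilde x$) and then implicitly uses that reversal is a bijection on $\F_2^n$ to match the two quantifiers. The only difference is that you spell out the bijection step more explicitly than the paper does.
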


\begin{proof}
By definition, $v^{(1)}, \dots, v^{(m)}$ work together if and only if, for every $x \in \mathbb{F}_2^n$, there is a $k$ such that $v^{(i)} \cdot \sigma^{k} x = 0$ for each $i \in [m]$.

Let $\text{rev}(x)$ be the reverse of $x$ (so $\text{rev}(x)_j = x_{-j}$ for each $j$). The constant coefficient of
\[
\left(v_0+v_1X+\dots+v_{n-1}X^{n-1}\right)
\left(x_0+x_1X+\dots+x_{n-1}X^{n-1}\right)\mod (X^{n}-1)
\]
is given by
\[
v_0x_0+v_1x_{n-1}+v_2x_{n-2}+\dots +v_{n-1}x_1=v\cdot \text{rev}(x).
\]
Similarly,
\[
v^{(i)} \cdot \sigma^k (\text{rev}(x)) = \sum_{j=0}^{n-1} v_j^{(i)} \text{rev}(x)_{j-k} = \sum_{j=0}^{n-1} v_j^{(i)} x_{k-j}
\] 
is the coefficient of $X^{k}$ in $f_{v^{(i)}} f_{x}$.

Thus, $v^{(i)} \cdot \sigma^{k} (\text{rev}(x)) = 0$ for every $i \in [m]$ if and only if the coefficient of $X^{k}$ of $f_{v^{(i)}}f_{x}$ is 0 for every $i \in [m]$. 
\end{proof}
Since $\alpha f_v+f_w=f_{\alpha v+w}$ for all $\alpha\in \F_2$ and $v,w\in \F_2^n$, a collection of vectors $v^{(1)},\dots,v^{(m)}$ is linearly independent if and only if the corresponding polynomials $f_{v^{(1)}},\dots,f_{v^{(m)}}$ are linearly independent.

\begin{remark*}
The $m = 1$ case of Proposition \ref{prop:think_about_polys} gives an alternative approach to prove Theorem \ref{thm:elts_that_work}. Indeed, Proposition \ref{prop:think_about_polys} tells us that the vector $v$ will work if and only if $f_v$ works. By definition, $f_v$ works if and only if, for every $x \in \F_2^n$, there is a $k$ with $k\in\{0,\dots,n-1\}$ such that the $X^k$ coefficient of $f_vf_x$ is 0; in other words, $f_v$ works if and only if $f_v$ does not divide $1 + X + \dots + X^{n-1}$ in $\F_2[X]/\langle X^{n} - 1\rangle$.

Since $(1+X)(1 + X + \dots + X^{n-1}) = X^n + 1$, note that the polynomial $f_v$ does not divide $1 + X + \dots + X^{n-1}$ exactly when $(1 + X)^r$ divides $f_v$, where $(1 + X)^r$ is the highest power of $1+X$ which divides $X^n + 1$. To see why, factor $X^n + 1$ as $(1+X)^r g(X)$ for some polynomial $g$ which is coprime to $(1 + X)$, and so $1 + X + \dots + X^{n-1} = (1+X)^{r-1} g(X)$. The polynomial $f_v$ divides $1 + X + \dots + X^{n-1}$ modulo $X^n + 1$ if and only if it divides $1 + X + \dots + X^{n-1}$ modulo $(1+X)^r$ and $g(X)$. Since $1 + X + \dots + X^{n-1}$ is 0 mod $g(X)$, this is equivalent to $f_v$ dividing $(1 + X)^{r-1}$ modulo $(1 + X)^r$, which is the same as $(1 + X)^{r}$ not dividing $f_v$.

Now, $r = 2^b$, where $2^b$ is the highest power of 2 which divides $n$. Thus, $f_v$ does not divide $1 + X + \dots + X^{n-1}$ exactly when $(1 + X)^{2^b}$ divides $f_v$, which is equivalent to the description given in Theorem \ref{thm:elts_that_work}.
\end{remark*}

This alternative formulation in terms of polynomials is particularly useful when $n = p$ is a prime with 2 as a primitive root. This is because of the following well-known results. 
\begin{lemma}[{\cite[Theorem 2.47 (ii)]{FiniteFieldsBook}}]\label{lem:factors_of_cyclo}
Let $p$ be an odd prime and let $\ord_p(2)$ be the multiplicative order of $2 \bmod p$. Then the polynomial $1 + X + \dots + X^{p-1}$ splits as a product of $t$ distinct irreducible factors in $\F_2[X]$, where $t = \frac{p-1}{\ord_p(2)}$.

In particular, if $2$ is a primitive root mod $p$, then $1 + X + \dots + X^{p-1}$ is irreducible in $\F_2[X]$.
\end{lemma}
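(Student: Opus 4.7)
The plan is to analyse the factorisation through the roots of $1+X+\cdots+X^{p-1}$ in an algebraic closure of $\F_2$. Since $(X-1)(1+X+\cdots+X^{p-1})=X^p-1$ and the derivative of $X^p-1$ over $\F_2$ is $pX^{p-1}=X^{p-1}$ (using that $p$ is odd, so $p\equiv 1\pmod 2$), this derivative is coprime to $X^p-1$, as it vanishes only at $0$, which is not a root of $X^p-1$. Hence $X^p-1$ is separable over $\F_2$, so its roots---and in particular the $p-1$ primitive $p$-th roots of unity that are the roots of $1+X+\cdots+X^{p-1}$---are all distinct. This already forces the irreducible factors of $1+X+\cdots+X^{p-1}$ to be pairwise distinct.

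Next, I would set $d=\ord_p(2)$ and identify the splitting field. The smallest field $\F_{2^k}$ whose multiplicative group contains an element of order $p$ is precisely the one with $p\mid 2^k-1$, i.e.\ $k=d$. Thus $\F_{2^d}$ is the splitting field of $1+X+\cdots+X^{p-1}$ over $\F_2$. The Galois group $\mathrm{Gal}(\F_{2^d}/\F_2)$ is cyclic of order $d$, generated by the Frobenius automorphism $\phi\colon\alpha\mapsto\alpha^2$. Standard Galois theory for finite fields then gives a bijection between the irreducible factors of $1+X+\cdots+X^{p-1}$ over $\F_2$ and the orbits of $\phi$ on the set $R$ of primitive $p$-th roots of unity, with the degree of each factor equal to the size of the corresponding orbit.

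To finish, I would compute these orbit sizes. For any $\zeta\in R$, the orbit $\{\zeta,\zeta^2,\zeta^4,\dots\}$ closes up at the smallest $k$ for which $\zeta^{2^k}=\zeta$, equivalently at the smallest $k$ with $2^k\equiv 1\pmod p$, which is $k=d$. So every orbit has size $d$, forcing $(p-1)/d=t$ orbits in total, and therefore $t$ distinct irreducible factors, each of degree $d$. The ``in particular'' clause then follows by specialisation: when $2$ is a primitive root mod $p$ we have $d=p-1$, hence $t=1$, and $1+X+\cdots+X^{p-1}$ is itself irreducible over $\F_2$.

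The only step with any real content is setting up the Frobenius-orbit correspondence; once that is in place the degree and count follow immediately from the definition of $\ord_p(2)$, and the separability argument at the start rules out repeated factors. No step presents a genuine obstacle, which is consistent with this being a classical textbook fact (as the authors indicate by citation).
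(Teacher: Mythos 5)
Your proof is correct and complete: the separability of $X^p-1$ over $\F_2$ (using that $p$ is odd), the identification of the splitting field as $\F_{2^{\ord_p(2)}}$, and the Frobenius-orbit computation showing every orbit on the primitive $p$-th roots of unity has size $\ord_p(2)$ together give exactly the claimed factorisation. The paper does not prove this lemma itself but cites it from Lidl--Niederreiter, and your argument is essentially the standard textbook proof of that cited result, so there is nothing further to compare.
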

\begin{lemma}[{\cite[Theorem 1.61]{FiniteFieldsBook}}]\label{lem:irreducible}
The quotient ring $\mathbb{F}_2[X]/\langle f\rangle$ is a field if and only if the polynomial $f$ is irreducible over $\mathbb{F}_2$.
\end{lemma}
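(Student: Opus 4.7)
The plan is to reduce the claim to two standard ring-theoretic facts: (a) for a commutative ring $R$ with identity and an ideal $I \trianglelefteq R$, the quotient $R/I$ is a field if and only if $I$ is a maximal ideal; and (b) in a principal ideal domain, a nonzero non-unit $f$ is irreducible if and only if the ideal $\langle f\rangle$ is maximal. Granting these, the lemma follows immediately, since $\mathbb{F}_2[X]$ is a principal ideal domain (being a polynomial ring in one variable over a field), so applying (b) to $f$ and then (a) to $\langle f\rangle$ gives the stated equivalence.

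I would prove (a) directly from the definitions. If $I$ is maximal and $\overline{a} \in R/I$ is nonzero, then $a \notin I$, so the ideal $I + \langle a\rangle$ strictly contains $I$ and hence equals $R$ by maximality. Writing $1 = x + ra$ with $x \in I$ shows $\overline{r}\cdot\overline{a} = \overline{1}$, so $\overline{a}$ has an inverse. Conversely, if $R/I$ is a field, any ideal $J$ with $I \subsetneq J \subseteq R$ descends to a nonzero ideal of the field $R/I$, which must be the whole field, forcing $J = R$.

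For (b), I would use the Euclidean structure of $\mathbb{F}_2[X]$ (division with remainder by degree) to show it is a PID. If $f$ is irreducible and $\langle f\rangle \subseteq \langle g\rangle$, then $g \mid f$, so $g$ is either a unit or an associate of $f$, giving $\langle g\rangle = \mathbb{F}_2[X]$ or $\langle g\rangle = \langle f\rangle$; hence $\langle f\rangle$ is maximal. Conversely, if $f$ factors nontrivially as $f = gh$ with neither factor a unit, then $\langle f\rangle \subsetneq \langle g\rangle \subsetneq \mathbb{F}_2[X]$, contradicting maximality.

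There is no real obstacle here, as this is a classical result appearing in any first course on ring theory (and indeed is cited from \cite{FiniteFieldsBook}); the only care needed is to notice that the statement is vacuous or trivial in degenerate cases (if $f = 0$ then $\mathbb{F}_2[X]/\langle f\rangle = \mathbb{F}_2[X]$ is not a field and $0$ is not irreducible, and if $f$ is a unit then the quotient is the zero ring, which by convention is not a field and units are not irreducible), so the equivalence holds in all cases.
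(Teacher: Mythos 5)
Your proof is correct. The paper does not prove this lemma itself but cites it as a standard result from Lidl--Niederreiter; your argument via the two classical facts (quotient by a maximal ideal is a field; irreducibles generate maximal ideals in a PID) is exactly the textbook route, and you have correctly handled the degenerate cases $f=0$ and $f$ a unit.
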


The following lemma is key to our approach to proving Theorem \ref{thm:main_theorem}, and is the only part of the proof which requires 2 to be a primitive root modulo $p$.
\begin{lemma}\label{lem:wlog_0all1}
Let $p$ be a prime with $2$ as a primitive root, and define the vector $\e$ to be the vector such that $\e_0 = 0$ and $\e_i = 1$ for $i \neq 0$. Suppose that the vectors $v^{(1)}, \dots, v^{(m)}$ work together and are linearly independent. Then there is a collection of $m$ linearly independent vectors which work together, and which contains $\e$.
\end{lemma}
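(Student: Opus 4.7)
The plan is to translate to the polynomial ring $R := \F_2[X]/\langle X^p - 1\rangle$ via Proposition \ref{prop:think_about_polys} and exploit the field structure that arises from $2$ being a primitive root modulo $p$. Each $v^{(i)}$ individually works (fix any $i$ in the definition of working together), so Theorem \ref{thm:elts_that_work} gives $|v^{(i)}| = 0$, i.e.\ $f_{v^{(i)}}(1) = 0$. Hence each $f_{v^{(i)}}$ lies in the ideal $I := (1+X)R$.

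Next, decompose $R$ via the Chinese Remainder Theorem. Since $X^p - 1 = (1+X)\Phi(X)$ where $\Phi(X) := 1 + X + \dots + X^{p-1}$ is irreducible over $\F_2$ by Lemma \ref{lem:factors_of_cyclo} (the sole place that uses $2$ being a primitive root mod $p$), Lemma \ref{lem:irreducible} yields an $\F_2$-algebra isomorphism
\[
R \;\cong\; \F_2 \times K, \qquad K := \F_2[X]/\langle \Phi(X)\rangle \cong \F_{2^{p-1}}.
\]
Under this isomorphism the ideal $I$ corresponds to $\{0\} \times K$, so each $f_{v^{(i)}}$ maps to a pair $(0, u_i)$ with $u_i \in K$, and the $\F_2$-linear independence of the $v^{(i)}$ transfers to $\F_2$-linear independence of $u_1, \dots, u_m$ in $K$. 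A direct computation using $f_\e = 1 + \Phi$ shows that $\e$ corresponds to $(0, 1)$, i.e.\ to the multiplicative identity of $K$ viewed inside $I$.

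The key observation is that working together is preserved under multiplication by any $g \in R$: if $f_1, \dots, f_m$ work together and $g \in R$, then for every $x \in \F_2^p$ we apply the hypothesis to the vector $y$ with $f_y = g f_x$ to get a shift $k$ with $[X^k](f_i f_y) = 0$ for all $i$, and since $f_i f_y = (g f_i) f_x$ the same $k$ works for $g f_1, \dots, g f_m$ applied to $x$.

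To finish, use that $K$ is a field: choose $g \in R$ corresponding under CRT to $(0, u_1^{-1})$. Then $g \cdot f_{v^{(1)}} \mapsto (0, 1) = f_\e$, and $g \cdot f_{v^{(i)}} \mapsto (0, u_1^{-1} u_i)$ for $i \geq 2$. Multiplication by $u_1^{-1}$ is an $\F_2$-linear bijection on $K$, so $\e, g v^{(2)}, \dots, g v^{(m)}$ are $\F_2$-linearly independent, and by the key observation above they work together. I expect the main conceptual step to be identifying that working together is $R$-invariant (so the property depends only on the ideal $I$, not on a specific basis within it); once this is recognized, the field structure on $I$ supplies the inverse $u_1^{-1}$ used to rescale $v^{(1)}$ to $\e$.
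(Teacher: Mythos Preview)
Your proof is correct and follows essentially the same route as the paper: reduce to polynomials, use the CRT splitting $R \cong \F_2 \times K$ coming from the irreducibility of $\Phi$, observe that each $f_{v^{(i)}}$ lands in $\{0\}\times K$, and then multiply by a suitable element to send $f_{v^{(1)}}$ to $f_\e$, checking that this preserves both working together and linear independence. The only cosmetic difference is that the paper multiplies by the unit $q \leftrightarrow (1,u_1^{-1})$ (so linear independence follows immediately from invertibility in $R$), whereas you multiply by $g \leftrightarrow (0,u_1^{-1})$ and instead argue linear independence via the bijectivity of multiplication by $u_1^{-1}$ on $K$; since every $f_{v^{(i)}}$ already lies in $\{0\}\times K$, the two choices produce the identical family of vectors.
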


\begin{proof}
Each polynomial $f_{v^{(i)}}$ works individually. By Theorem \ref{thm:elts_that_work}, this means $|v^{(i)}| = 0$, which means that $f_{v^{(i)}}$ is divisible by $1+X$ as $f_{v^{(i)}}(1) = 0$. Since none of the polynomials can be the zero polynomial, they must all be equivalent to a nonzero polynomial modulo $1 + X + \dots + X^{p-1}$.

By Lemma \ref{lem:factors_of_cyclo}, $X^p - 1$ factors into irreducibles as 
\[
X^p - 1 = (X+1)(1 + X + \dots + X^{p-1}).
\]

Thus, $\F_2[X]/(X^p - 1) \cong \F_2[X]/(X+1) \oplus \F_2[X]/(1 + X + \dots + X^{p-1})$, and Lemma \ref{lem:irreducible} tells us that the two factors are fields. In particular, the irreducibility of $1 + X + \dots + X^{p-1}$ tells us that each $f_{v^{(i)}}$ is invertible modulo $1 + X + \dots + X^{p-1}$.

Thus, we may use the Chinese Remainder Theorem to find a polynomial $q$ which is $1 \bmod (1+X)$ and an inverse to $f_{v^{(1)}}$ modulo $(1 + X + \dots + X^{p-1})$. 

Now, $qf_{v^{(1)}}$ is $0 \bmod (X+1)$ and $1 \bmod (1 + X + \dots + X^{p-1})$, so
\[
qf_{v^{(1)}}=X+\dots+X^{p-1}=f_{\e}.
\]

We claim that the polynomials $q f_{v^{(1)}}, \dots, q f_{v^{(m)}}$ still work together and are linearly independent. Taking the vectors in $\F_2^p$ corresponding to the given polynomials then gives the collection of vectors required by the statement of the lemma. 

Suppose first that $q f_{v^{(1)}}, \dots, q f_{v^{(m)}}$ do not work together; thus, there exists a vector $x \in \F_2^p$ such that, for any $k$, there is an $i \in [m]$ such that the $X^k$ coefficient of $f_{x} qf_{v^{(i)}}$ is 1. Writing $f_xq=f_{x'}$ for some $x'\in \F_2^p$, we find that, for any $k$, there is an $i \in [m]$ such that the $X^k$ coefficient of $f_{x'}f_{v^{(i)}}$ is 1. This contradicts the assumption that the $v^{(i)}$ work together.

Next, suppose that the $qf_{v^{(i)}}$ are not linearly independent. Then there is a relation
\[ \sum_i\lambda_i q f_{v^{(i)}}\equiv 0\bmod (X^p-1) \] for some choice of $\lambda_i\in \F_2$ not all 0. However, $q$ is invertible modulo $X^p - 1$ by construction, so \[ \sum_i\lambda_i f_{v^{(i)}}\equiv 0\bmod (X^p-1). \] This contradicts the linear independence of the $f_{v^{(i)}}$.
\end{proof}

Lemma \ref{lem:wlog_0all1} tells us that in order to show that $h_2(p) = 2$ for a prime $p$ with 2 as a primitive root, it suffices to show that there is no collection of three linearly independent vectors $\e, v, w$ which work together. 

We now turn to examining the structure of the set of vectors $v \in \F_2^n$ which work with $\e$. There is a fairly substantial collection of vectors $v$ which work with $\e$, as shown by the following proposition.
\begin{proposition}\label{prop:all_sym_work}
Let $n$ be any odd number, and suppose $v \in \F_2^n$ is a vector with $v_0 = 0$. Suppose further that $v$ is \emph{symmetric}, in that $v_i = v_{-i}$ for each $i$. Then $v$ works together with $\e$.
\end{proposition}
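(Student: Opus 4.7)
The plan is to prove the stronger statement that, for every $x \in \F_2^n$, the number
\[
N(x) := \big|\{k \in \{0, \dots, n-1\} : \e \cdot \sigma^k x = 0 \text{ and } v \cdot \sigma^k x = 0\}\big|
\]
of shifts witnessing that $\e$ and $v$ work together at $x$ is odd, and hence nonzero. Writing $a_k := \e \cdot \sigma^k x$ and $b_k := v \cdot \sigma^k x$ and using that $(1+y)(1+z) \in \F_2$ is the indicator of $y = z = 0$ for $y, z \in \F_2$, this reduces to verifying
\[
N(x) \equiv n + \sum_k a_k + \sum_k b_k + \sum_k a_k b_k \equiv 1 \pmod 2.
\]

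Three of the four terms are straightforward. Since $n$ is odd, $n \equiv 1$. The identity $\sum_k \sigma^k x = |x|(1, \dots, 1)$ gives $\sum_k a_k = |\e|\,|x| = (n-1)|x| \equiv 0$ and $\sum_k b_k = |v|\,|x|$.

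For the cross-term I expand $\sum_k a_k b_k = \sum_{i,j}\e_i v_j \sum_k x_{i-k} x_{j-k}$, swap the order of summation, and reindex with $d = j-i$ to obtain
\[
\sum_k a_k b_k = \sum_d t_d \sum_i \e_i v_{i+d}, \qquad \text{where } t_d := \sum_m x_m x_{m+d}.
\]
Since $\e_i = 1 - \1[i=0]$, the inner sum equals $|v| + v_d$, and the cross-term splits as $|v| \sum_d t_d + \sum_d v_d t_d$. The identity $\sum_d t_d = |x|^2 = |x|$ makes the first piece $|v|\,|x|$, cancelling $\sum_k b_k$. For the second piece, the symmetry hypothesis $v_d = v_{-d}$ combined with the automatic symmetry $t_d = t_{-d}$ means that for each pair $\{d, -d\}$ with $d \neq 0$ the contribution is $2 v_d t_d = 0$ in $\F_2$; because $n$ is odd, $d$ and $-d$ are distinct for $d \neq 0$, so this pairing exhausts all non-zero indices, leaving only the diagonal term $v_0 t_0 = 0$ by the hypothesis $v_0 = 0$. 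Assembling everything yields $N(x) \equiv 1 + 0 + |v|\,|x| + |v|\,|x| \equiv 1 \pmod 2$. The only substantive step is the cancellation in $\sum_d v_d t_d$, which is exactly where all three hypotheses (symmetry of $v$, $v_0 = 0$, and $n$ odd) are used; no further structural input is required.
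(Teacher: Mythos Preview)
Your proof is correct and follows essentially the same approach as the paper: both show that the number of good shifts is odd by a parity argument whose heart is the symmetry cancellation $\sum_d v_d t_d = 0$ (equivalently, the paper's $\sum_{(j_1,j_2):x_{j_1}=x_{j_2}=1} v_{j_2-j_1}=0$). The only organisational difference is that the paper first reduces to odd-weight $x$ and conditions on $\e\cdot\sigma^k x=0$ before summing, whereas you expand $(1+a_k)(1+b_k)$ directly and let the $|v||x|$ terms cancel, which is slightly cleaner since it avoids the case split.
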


\begin{proof}
Let $x \in \F_2^n$ be any vector. We must prove that there exists a $k$ such that $\e \cdot \sigma^k{x} = v \cdot \sigma^k{x} = 0$. 

First, observe that we may assume that $x$ has odd weight. If $x$ has even weight, we can add the vector $ z = (1, \dots, 1)$ of all ones; this gives a vector of odd weight. Now, $v \cdot \sigma^k{z} = \e \cdot \sigma^k{z} = 0$; to see why, observe that $v$ must have even weight by virtue of being symmetric and $\e$ must have even weight as $n$ is odd. Therefore, adding $z$ does not change the inner products $v \cdot \sigma^k x$ and $\e \cdot \sigma^k x$.

Now, $\e \cdot \sigma^k{x} = 0$ exactly when $x_{-k} = 1$, and for such a choice of $k$, 
\begin{equation}\label{eqn:v_dot_wsh}
v \cdot \sigma^k{x} = \sum_{i : x_{i-k} = 1} v_i.
\end{equation}
Summing (\ref{eqn:v_dot_wsh}) over all of the values of $k$ with $x_{-k} = 1$, we obtain
\begin{align*}
    \sum_{k : x_{-k} = 1} v \cdot \sigma^k{x} &= 
    \sum_{(k,i) : x_{-k}= x_{i-k} = 1} v_i \\
    &= \sum_{(j_1,j_2):x_{j_1} = x_{j_2} = 1} v_{j_2 - j_1}.
\end{align*}
Since $v_0 = 0$, and $v_{j_2 - j_1} + v_{j_1 - j_2} = 0$, this sum is 0.

Therefore, the sum of $v \cdot \sigma^k{x}$ over all $k$ such that $\e \cdot \sigma^k{x} = 0$ is zero. Since there are an odd number of such $k$ (as $x$ has odd weight), at least one of these $k$ must have $\e \cdot \sigma^k x = v \cdot \sigma^k{x} = 0$.
\end{proof}

Since $\e$ and $v$ work together whenever $v$ is symmetric, the following lemma will be necessary if we are to prove that $h_2(p) = 2$.
\begin{lemma}\label{lem:nothing_works_with_sym}
Let $p > 3$ be a prime, and suppose that $v \neq \e$ is a non-zero symmetric vector. Suppose further that $w \in \F_2^p$ is any vector such that $\e, v$ and $w$ work together. Then $w$ is contained in the subspace $\langle \e, v\rangle$ spanned by $\e$ and $v$.
\end{lemma}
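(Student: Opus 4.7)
The plan is to suppose $\e,v,w$ work together and extract enough linear constraints on $w$ from small-weight test vectors $x$ to force $w\in\langle \e,v\rangle$. First, the test vector $x=e_0$ has $k=0$ as the only shift with $\e\cdot\sigma^k e_0=0$, so the triple-equality forces $w_0=0$. Next, for each pair of distinct nonzero $a,b\in\Z/p\Z$ I would apply the triple to the weight-$3$ vector $x=e_0+e_a+e_b$. Using the symmetry of $v$, the parity argument behind Proposition~\ref{prop:all_sym_work} shows that $T(x):=\{k:\e\cdot\sigma^k x=v\cdot\sigma^k x=0\}$ has odd cardinality. A direct calculation identifies the three candidate shifts $\{0,-a,-b\}$ with the vertices of the ``triangle'' $\{0,a,b\}$: vertex $i$ lies in $T(x)$ iff the two edges incident to $i$ share the same ``$v$-value'' (the value of edge $\{i,j\}$ being $v_{j-i}$), so $|T(x)|\in\{1,3\}$, with $|T(x)|=1$ precisely when $v_a,v_b,v_{b-a}$ are not all equal. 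In the $|T(x)|=1$ case the unique admissible $k$ forces $w\cdot\sigma^k x=0$, which after relabelling the isolated vertex to $0$ reads $w_\alpha+w_\beta=0$ for the remaining two vertices $\alpha,\beta$.

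Consequently, whenever $\alpha,\beta\in\Z/p\Z\setminus\{0\}$ are distinct with $v_\alpha=v_\beta\ne v_{\alpha-\beta}$, we have $w_\alpha=w_\beta$. Writing $A:=\{i:v_i=1\}$ and $B:=\Z/p\Z\setminus(A\cup\{0\})$, these constraints stay entirely within $A$ or within $B$; so they produce two graphs, $\Gamma_1$ on $A$ (edges $\{\alpha,\beta\}$ iff $\alpha-\beta\notin A$) and $\Gamma_2$ on $B$ (edges iff $\alpha-\beta\in A$). If both graphs are connected, then $w$ is constant on each of $A$ and $B$; combined with $w_0=0$ this forces $w=\lambda\e+\mu v$ for some $\lambda,\mu\in\F_2$, as required.

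The main obstacle is the combinatorial statement that both $\Gamma_1$ and $\Gamma_2$ are connected whenever $A$ is symmetric, $0\notin A$, $\emptyset\ne A\ne \Z/p\Z\setminus\{0\}$, and $p>3$. For $\Gamma_2$, assume a proper nontrivial component $C\subsetneq B$. The equivalence relation is invariant under $\alpha\mapsto-\alpha$, so $-C=C$ and $|C|$ is even. Having no external edges gives $C+A\subseteq C\cup A\cup\{0\}$, and the symmetry $-A=A$ combined with $C\cap A=\emptyset$ rules out $0\in C+A$, whence $|C+A|\le|C|+|A|$. Cauchy--Davenport then leaves $|C+A|\in\{|C|+|A|-1,|C|+|A|\}$. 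In the Vosper-extremal case ($|C+A|=|C|+|A|-1$), $C$ and $A$ must both be arithmetic progressions of even length with a common difference, both symmetric around $0$; the sumset $C+A$ is then a symmetric AP of odd length centered at $0$, hence contains $0$, contradicting $C+A\subseteq A\cup C$ (neither of which contains $0$). The remaining case $|C+A|=|C|+|A|$ requires a short additional argument exploiting the forced symmetry of $C$, or can be handled by supplementing the weight-$3$ constraints with weight-$5$ test vectors of the form $x=e_0+e_j+e_{-j}+e_i+e_{-i}$ that yield further identities on sums $w_j+w_{-j}$. A parallel argument treats $\Gamma_1$, and the hypothesis $p>3$ is used to exclude the degenerate $|A|=2$ subcase. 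I expect this combinatorial/additive step to be the bulk of the work.
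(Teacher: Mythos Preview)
Your approach is essentially the paper's: weight-$3$ test vectors yield the constraint $w_\alpha=w_\beta$ whenever $v_\alpha=v_\beta\ne v_{\alpha-\beta}$, and then Cauchy--Davenport plus Vosper are used to force connectivity of the resulting graphs on $A$ and on $B$. The paper's Claim is exactly your edge relation, and its equivalence classes are your components.

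There are, however, two related gaps. First, the assertion ``the equivalence relation is invariant under $\alpha\mapsto-\alpha$, so $-C=C$'' does not follow: negation is an automorphism of $\Gamma_2$, so it permutes the components, but a given component $C$ may satisfy $C\cap(-C)=\emptyset$ rather than $C=-C$. Your Vosper-case argument uses $C=-C$ essentially, so this needs repair. Second, as you acknowledge, the case $|C+A|=|C|+|A|$ is left open, and neither the ``short additional argument'' nor the weight-$5$ idea is developed.

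Both issues disappear with a single trick the paper uses: adjoin $0$ to each set before applying Cauchy--Davenport. From $C+A\subseteq C\cup A$ one gets $C_0+A_0\subseteq C\cup A\cup\{0\}$ where $C_0=C\cup\{0\}$ and $A_0=A\cup\{0\}$, so $|C_0+A_0|\le |C|+|A|+1$; but Cauchy--Davenport gives $|C_0+A_0|\ge |C_0|+|A_0|-1=|C|+|A|+1$, forcing equality. Now Vosper (after disposing of the boundary case $|C_0+A_0|\ge p-1$ separately) makes $C_0$ and $A_0$ arithmetic progressions with the same common difference $d$. Here one uses the symmetry of $A_0$ (not of $C$): since $A_0=-A_0$ and $0\in A_0$, both $d$ and $-d$ lie in $A_0$; but $C_0$ is a progression through $0$ with step $d$, so it contains $d$ or $-d$, contradicting $C\subseteq B$ disjoint from $A$. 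This replaces your unresolved case and avoids any appeal to $C=-C$.
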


In order to prove this, we will rely on the Cauchy-Davenport inequality \cite[Theorem 5.4]{TaoVu} and Vosper's theorem \cite[Theorem 5.9]{TaoVu}, both of which we state here for convenience.
\begin{theorem}[Cauchy-Davenport Inequality]\label{thm:cauchy-davenport}
Suppose that $p$ is a prime, and $A$ and $B$ are two nonempty sets in $\Z/p\Z$. Then
\[
|A+B| \geq \min(|A| + |B| - 1, p).
\]
\end{theorem}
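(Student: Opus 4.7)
I would prove the contrapositive: assume $w \notin \langle \e, v \rangle$ and produce an $x \in \F_2^p$ for which no shift $k$ satisfies $\e \cdot \sigma^k x = v \cdot \sigma^k x = w \cdot \sigma^k x = 0$. The polynomial formulation of Proposition~\ref{prop:think_about_polys} together with the identity $f_\e f_x = f_x$ (when $|x|$ is even) or $f_\e f_x = f_x + \Phi_p$ (when $|x|$ is odd), where $\Phi_p = 1 + X + \cdots + X^{p-1}$, shows that the condition $\e \cdot \sigma^k x = 0$ becomes $k \notin \mathrm{supp}(x)$ in the even-parity case and $k \in \mathrm{supp}(x)$ in the odd-parity case. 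Since each of $v$, $w$ individually works with $\e$, Theorem~\ref{thm:elts_that_work} gives $|v|=|w|=0$, and the symmetry of $v$ then forces $v_0=0$, while testing $x = e_0$ forces $w_0=0$. Write $A = \mathrm{supp}(v)$ and $B = \mathrm{supp}(w)$, so $A = -A$, $0 \notin A \cup B$, and $\emptyset\neq A \neq [p-1]$; the hypothesis $w \notin \langle \e, v \rangle$ translates into $B \notin \{\emptyset,\, A,\, [p-1],\, [p-1]\setminus A\}$ (these being the four supports of elements of $\langle \e,v\rangle$ with $0$th coordinate zero).

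A central parity observation drives the argument. Let $E(x) = \{k : \e \cdot \sigma^k x = 0\}$ and $K(x) = \{k \in E(x) : v \cdot \sigma^k x = 0\}$. From the above, $|E(x)|$ equals $|\mathrm{supp}(x)|$ or $p - |\mathrm{supp}(x)|$, and both are odd for $p$ odd. Mimicking the proof of Proposition~\ref{prop:all_sym_work}, the contributions to $\sum_{k \in E(x)} v \cdot \sigma^k x$ pair off via the symmetry $v_i = v_{-i}$ (using $v_0 = 0$), so this sum is $0$ and an even number of $k \in E(x)$ have $v \cdot \sigma^k x = 1$; hence $|K(x)|$ is odd. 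To force failure on $x$ it therefore suffices to find $x$ with $w \cdot \sigma^k x = 1$ for \emph{every} $k \in K(x)$, since then no common zero of all three inner products exists.

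To construct such an $x$, I would analyse vectors with small or structured support (for example $x = e_i + e_j$, $x = v$, or $x$ supported on a translate of a subset of $A$). In each case the conditions $v \cdot \sigma^k x = 0$ and $w \cdot \sigma^k x = 0$ translate into combinatorial statements about sumsets $A + C$ and $B + C$ together with symmetric differences $A \triangle (A+m)$ and $B \triangle (B+m)$ in $\Z/p\Z$. Cauchy--Davenport (Theorem~\ref{thm:cauchy-davenport}) then ensures these sumsets are large, so in most configurations a failing $x$ can be produced directly; in the borderline cases where equality in Cauchy--Davenport is attained, Vosper's theorem forces both $A$ and $B$ to be arithmetic progressions sharing a common difference, and the symmetry $A = -A$ pins down $A$ as an AP centred at $0$. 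Matching this structural description with the hypothesis $B \notin \{\emptyset,\, A,\, [p-1],\, [p-1]\setminus A\}$ yields the required contradiction. The main obstacle I anticipate is precisely this extremal casework: for each admissible AP configuration of $A$ and $B$ (the Vosper cases), one must exhibit an explicit failing $x$ and check that the symmetry constraint on $A$, combined with non-membership of $w$ in $\langle \e,v\rangle$, rules out the odd-covering condition needed for $\e, v, w$ to work together.
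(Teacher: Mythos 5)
Your proposal does not address the statement in question. The statement is the Cauchy--Davenport inequality itself: for a prime $p$ and nonempty $A,B\subseteq \Z/p\Z$, one has $|A+B|\ge \min(|A|+|B|-1,p)$. What you have written is instead a proof sketch of Lemma~\ref{lem:nothing_works_with_sym} (that any $w$ working together with $\e$ and a nonzero symmetric $v\ne\e$ must lie in $\langle \e,v\rangle$). Indeed, your argument explicitly \emph{invokes} Theorem~\ref{thm:cauchy-davenport} and Vosper's theorem as tools, so even read charitably it cannot serve as a proof of Cauchy--Davenport: it would be circular. The paper itself offers no proof of this theorem --- it is quoted from the literature (Tao--Vu, Theorem 5.4) as a known black box --- so the only honest answers here are either to cite it or to supply one of the standard proofs.

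If you did want to prove Cauchy--Davenport from scratch, the two classical routes are: (i) the $e$-transform (Davenport transform), replacing $(A,B)$ by $(A\cup(B+e),\,B\cap(A-e))$ for a suitable $e$, which preserves $A+B\supseteq$ the new sumset and $|A|+|B|$ while strictly shrinking $B$ until $|B|=1$, at which point the bound is trivial; or (ii) Alon's Combinatorial Nullstellensatz applied to the polynomial $f(x,y)=\prod_{c\in C}(x+y-c)$ where $C\supseteq A+B$ is assumed to have size $|A|+|B|-2$, extracting a nonvanishing coefficient of $x^{|A|-1}y^{|B|-1}$ to contradict $f$ vanishing on $A\times B$. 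Neither idea appears in your proposal, so as a proof of the stated theorem it is a non-starter; as a sketch of Lemma~\ref{lem:nothing_works_with_sym} it is broadly aligned with the paper's actual argument for that lemma (the equivalence-class/claim structure, Cauchy--Davenport plus Vosper on $B_0+C_0$), but that is not the task at hand.
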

\begin{theorem}[Vosper's Theorem]\label{thm:vosper}
Suppose that $p$ is a prime, and $A$ and $B$ are two sets in $\Z/p\Z$. Suppose that $|A|, |B| \geq 2$ and $|A + B| \leq p-2$. Then $|A + B| = |A| + |B| - 1$ if and only if $A$ and $B$ are arithmetic progressions with the same common difference.
\end{theorem}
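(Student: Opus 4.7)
The plan is to argue by contradiction: suppose $w \notin \langle \e, v \rangle$, and exhibit a vector $x \in \F_2^p$ for which $\e, v, w$ admit no common zero-shift.

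The first step is setup. Since $\e, v, w$ work together, each of $v$ and $w$ must also work individually, so Theorem~\ref{thm:elts_that_work} gives $|v| = |w| = 0$ in $\F_2$. The symmetry of $v$ combined with the oddness of $p$ pairs $v_i$ with $v_{-i}$ for $i \neq 0$, so $|v| = v_0$, forcing $v_0 = 0$. Write $A = \mathrm{supp}(v)$ and $B = \mathrm{supp}(w)$ inside $\Z/p\Z$; then $A = -A$, $0 \notin A$, both $|A|$ and $|B|$ are even, and $\langle \e, v \rangle$ consists exactly of the four vectors whose supports are $\emptyset$, $A$, $\{1,\dots,p-1\}$, and $\{1,\dots,p-1\} \triangle A$. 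The hypothesis $w \notin \langle \e, v \rangle$ is that $B$ equals none of these four.

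The second step is the translation to additive combinatorics. I would evaluate the working-together condition on low-weight test vectors $x = e_a + e_b$. Here $\e \cdot \sigma^k x = 0$ forces $-k \notin \{a,b\}$, and then (as in the proof of Proposition~\ref{prop:all_sym_work}) $v \cdot \sigma^k x = 0$ is equivalent to $m := a-k$ lying in $S_d := \{m : v_m = v_{m+d}\}$ where $d = b - a$. Requiring $w \cdot \sigma^k x = 0$ for the same shift is $m \in T_d := \{m : w_m = w_{m+d}\}$, and the excluded shifts correspond to $m \notin \{2a, a+b\}$. The working-together condition therefore says $S_d \cap T_d \not\subseteq \{2a, a+b\}$ for every pair $(a,b)$ with $b - a = d$. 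Aggregating this local constraint over all $d \neq 0$ (and if needed supplementing with indicator tests $x = \mathbf{1}_C$ for small $C$) forces a sumset inequality relating $|A|$, $|B|$, and $|A+B|$ that is incompatible with Cauchy-Davenport unless the latter is tight.

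The third step is the additive-combinatorial finish. Cauchy-Davenport (Theorem~\ref{thm:cauchy-davenport}) gives $|A + B| \geq |A| + |B| - 1$, while step two will force $|A+B| \leq |A|+|B|-1$ with $|A|, |B| \geq 2$ and $|A+B| \leq p - 2$. Vosper's theorem (Theorem~\ref{thm:vosper}) then says $A$ and $B$ are arithmetic progressions with a common difference. The symmetry $A = -A$ pins $A$ down as a symmetric AP about $0$, of the form $\{\pm 1, \pm 2, \dots, \pm t\}$ after rescaling the common difference, and then $B$ being an AP with the same common difference and even length must coincide with one of $\emptyset$, $A$, $\{1,\dots,p-1\}\triangle A$, or $\{1,\dots,p-1\}$, yielding the desired contradiction.

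The principal obstacle is step two: the condition ``for every $x$ there exists a simultaneously good shift $k$'' is not a priori a sumset statement, and extracting the clean additive inequality needed to trigger Cauchy-Davenport and Vosper requires choosing the right test family and the right additive quantity. The parity structure coming from $\F_2$-linearity and the fact that $|A|$ is automatically even (because $A = -A$ with $0 \notin A$) is what makes the additive combinatorics over $\Z/p\Z$ applicable in the first place.
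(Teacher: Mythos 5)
The statement you were asked to prove is Vosper's Theorem itself: a classical result in additive combinatorics about when the Cauchy--Davenport bound $|A+B| = |A|+|B|-1$ is attained in $\Z/p\Z$. Your proposal does not address this statement at all. What you have written is an attempted proof of Lemma~\ref{lem:nothing_works_with_sym} (that any $w$ working together with $\e$ and a symmetric $v$ must lie in $\langle \e, v\rangle$), and in its third step it \emph{invokes} Vosper's Theorem as a black box. So relative to the assigned statement the argument is vacuous --- you cannot prove a theorem by citing it as an ingredient in the proof of a different result. For the record, the paper does not prove Vosper's Theorem either; it quotes it from Tao and Vu and uses it, exactly as you do, as a tool in the proofs of Lemma~\ref{lem:nothing_works_with_sym}, Lemma~\ref{lem:no_twoway_edges} and Lemma~\ref{lem:no_ssl}. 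A genuine proof would have to establish the sumset rigidity statement directly, e.g.\ via the $e$-transform/Davenport transform used to prove Cauchy--Davenport, together with a careful analysis of the equality case, or via the polynomial method; none of that machinery appears in your proposal.

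Even read charitably as a proof of Lemma~\ref{lem:nothing_works_with_sym}, the argument is incomplete: you acknowledge yourself that step two --- converting ``for every $x$ there is a good shift'' into a clean sumset inequality $|A+B| \leq |A|+|B|-1$ --- is not carried out, and it is in fact the entire content of that lemma's proof. (The paper handles it by testing against weight-$3$ vectors $e_0 + e_i + e_{-j}$, building an equivalence relation on $(\Z/p\Z)\setminus\{0\}$, and deriving the containment $B_0 + C_0 \subseteq B \dju C \dju \{0\}$ for a putative small equivalence class $C \subsetneq A$, to which Cauchy--Davenport and Vosper are then applied.) But the primary issue remains that you have proved, or sketched, the wrong statement.
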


We will often implicitly use the following observation, which gives a simple restriction on vectors $v$ that can work together with $\e$.
\begin{observation}\label{obs:0_in_slot_0}
Suppose that $v \in \F_2^n$ works with $\e$. Then $v_0 = 0$.
\end{observation}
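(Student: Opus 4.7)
The plan is to test the working-together condition on a single well-chosen vector $x$, namely the standard basis vector $x=e_0$. For this choice, the unique shift for which $\e \cdot \sigma^k x = 0$ will turn out to be $k=0$, and forcing $v\cdot \sigma^0 x = 0$ at that shift immediately reads off as $v_0 = 0$.

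More concretely, I would compute $\sigma^k e_0 = e_k$ (with indices modulo $n$), so that
\[
\e \cdot \sigma^k e_0 = \e_k,
\]
which by definition of $\e$ equals $1$ for every $k \in \{1,\dots,n-1\}$ and equals $0$ only when $k \equiv 0 \pmod n$. Hence the only shift that makes $\e \cdot \sigma^k e_0$ vanish is $k=0$. Since $v$ and $\e$ work together, applied to $x = e_0$ there must exist a shift $k$ for which both inner products vanish; the analysis above forces this shift to be $k=0$, and then the second condition gives
\[
0 = v \cdot \sigma^0 e_0 = v \cdot e_0 = v_0,
\]
as claimed.

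There is essentially no obstacle here: the statement is a direct instantiation of the definition of working together on the single test vector $e_0$, exploiting that $\e$ has a unique zero coordinate. The only thing to double-check is the indexing convention for $\sigma$, which the paper fixes via $\sigma(e_i)=e_{i+1}$, making $\sigma^k e_0 = e_k$ unambiguous.
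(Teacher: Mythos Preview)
Your proof is correct and matches the paper's own argument: both test the working-together condition on $x=e_0$, observe that the unique shift making $\e\cdot\sigma^k e_0=0$ is $k=0$, and conclude $v_0=0$.
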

This follows by considering the test vector $x = e_0$ of weight 1. Since $\sigma^kx \cdot \e$ is only zero if $k = 0$ and $\e$ works with $v$, we must have that $v_0=e_0 \cdot v = 0$.

\begin{proof}[Proof of Lemma \ref{lem:nothing_works_with_sym}]
Suppose that $v \in \F_2^p$ is symmetric, nonzero and not equal to $\e$. Let $A \subseteq \Z/p\Z$ consist of the values of $i$ for which $v_i = 1$, and let $B \subseteq (\Z/p\Z) \setminus \{0\}$ consist of the values of $i$ (except 0) for which $v_i = 0$. Observe that both $A = -A$ and $B = -B$ since $v$ is symmetric, and that $|A|,~|B| \geq 2$ as we assume that $v$ is non-zero and $v \neq \e$.

The space $\langle\e, v\rangle$ consists of the four vectors $u$ which satisfy both that $u_0 = 0$ and that $u$ is constant on both $A$ and $B$. Thus, if $w$ is a vector that works together with both $\e$ and $v$, it suffices to prove that $w$ is constant on both $A$ and $B$, since $w_0 = 0$ by Observation \ref{obs:0_in_slot_0}.

Consider the equivalence relation $\sim$ on $(\Z/p\Z)\setminus \{0\}$ generated as follows. We say that $i \sim j$ if $w_i = w_j$ for all $w$ which work together with $\e$ and $v$. Thus, it suffices to prove that the only two equivalence classes are $A$ and $B$. The following claim will help us to do so.
\begin{claim*}
Suppose that $i \in B$ and $j \in A$ are such that  $i+j \in A$. Then $j \sim i+j$. The same also holds with $A$ and $B$ reversed.
\end{claim*}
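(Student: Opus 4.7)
The plan is to exhibit a single test vector $x \in \F_2^p$ for which the joint constraint ``$\e \cdot \sigma^k x = 0$ and $v \cdot \sigma^k x = 0$'' pins down a unique shift $k$. Once only one shift survives, the fact that $\e, v, w$ work together will force $w \cdot \sigma^k x = 0$ at that shift, producing a linear equation whose content is exactly $w_j = w_{i+j}$.

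I would take the weight-$3$ test vector $x = e_0 + e_j + e_{i+j}$; note that $0$, $j$ and $i+j$ are distinct modulo $p$ since $i, j, i+j$ are all nonzero (they lie in $A \cup B$ and $i \neq 0$). Since $|x| = 3$ is odd and $\e \cdot \sigma^k x = |x| - x_{-k}$, the condition $\e \cdot \sigma^k x = 0$ is equivalent to $-k \in \{0, j, i+j\}$, i.e.\ to $k \in \{0, -j, -(i+j)\}$, and these three candidate shifts are pairwise distinct for the same reason.

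Next I would evaluate $v \cdot \sigma^k x = v_k + v_{k+j} + v_{k+i+j}$ at each candidate, using $v_0 = 0$ together with the symmetry $v_{-r} = v_r$:
\begin{align*}
k = 0:& \quad v_0 + v_j + v_{i+j}, \\
k = -j:& \quad v_{-j} + v_0 + v_i = v_j + v_i, \\
k = -(i+j):& \quad v_{-(i+j)} + v_{-i} + v_0 = v_{i+j} + v_i.
\end{align*}
Under the hypothesis $i \in B$, $j, i+j \in A$ (so $v_i = 0$, $v_j = v_{i+j} = 1$), the three values are $0$, $1$, $1$. Thus $k = 0$ is the unique shift satisfying both constraints, and the working-together assumption forces
\[ 0 = w \cdot \sigma^0 x = w_0 + w_j + w_{i+j} = w_j + w_{i+j},\]
where $w_0 = 0$ comes from Observation~\ref{obs:0_in_slot_0}. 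Hence $w_j = w_{i+j}$, which gives $j \sim i + j$.

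For the ``$A,B$ reversed'' version of the claim ($i \in A$, $j, i+j \in B$), I would use \emph{the same} test vector $x = e_0 + e_j + e_{i+j}$. Now $v_i = 1$ and $v_j = v_{i+j} = 0$, and the three values above become $0$, $0 + 1 = 1$, $0 + 1 = 1$ respectively, so $k = 0$ is again the unique surviving shift and the same computation yields $w_j = w_{i+j}$. I do not anticipate any real obstacle: the only thing to check carefully is that the three candidate shifts are distinct and that exactly one of them survives the second constraint, and both facts fall out immediately from $i, j, i+j \neq 0$ together with the symmetry of $v$.
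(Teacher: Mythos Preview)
Your proof is correct and follows essentially the same approach as the paper: a weight-$3$ test vector is chosen so that exactly one of the three shifts orthogonal to $\e$ is also orthogonal to $v$, forcing $w_j = w_{i+j}$. In fact your test vector $e_0 + e_j + e_{i+j}$ is precisely $\sigma^j$ of the paper's choice $e_0 + e_i + e_{-j}$, so the two arguments are the same up to a global shift.
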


\begin{claimproof}[Proof of Claim]
We only prove the first case as the second is almost identical. Consider the vector $x = e_0 + e_i + e_{-j}$ of weight 3. The only shifts of $x$ which are orthogonal to $\e$ are ${x}$, $\sigma^{-i}{x}$ and $\sigma^j{x}$, and by assumption neither ${x}$ nor $\sigma^{-i}{x}$ are orthogonal to $v$.

Thus the only shift of $x$ which is orthogonal to both $\e$ and $v$ is $\sigma^j{x}$, and this must therefore be orthogonal to $w$ for all vectors $w$ that work together with $\e$ and $v$. Hence $w_j = w_{i+j}$ for all such $w$, which means that $j \sim i+j$.
\end{claimproof}

Now, we prove that the only two equivalence classes are $A$ and $B$. We will prove that $A$ is an equivalence class; the proof that $B$ is an equivalence class is almost identical. Suppose that $C \subsetneq A$ is an equivalence class of $\sim$ which is not the whole of $A$; without loss of generality, we may assume that $C$ is the smallest such class, which means that $|C| \leq |A|/2$. 

Suppose that $i \in B$ and $j \in C$. Then $i+j \neq 0$, so either $j+ i \in B$, or $j +i \in A$. In the latter case, $j \sim j+i$, and so the claim tells us that $j+i \in C$. In particular, we always have  $j + i  \notin A \setminus C$. Thus, $B + C \subseteq B \dju C$.

Now, letting $B_0 = B \dju \{0\}$ and $C_0 = C \dju \{0\}$, we see that 
\begin{equation}\label{eqn:nwws_subset}
B_0 + C_0 \subseteq B \dju C \dju \{0\}.
\end{equation} 

Since $C \neq A$, it must be the case that $|B \dju C \dju \{0\}| < p$. Thus, by Cauchy-Davenport we have that $|B_0 + C_0| \geq |B_0| + |C_0| - 1 = |B| + |C| + 1$, and so we have equality here and in (\ref{eqn:nwws_subset}). Both of $B_0$ and $C_0$ have at least two elements, so we have two cases.

\textbf{Case 1: } $|B_0 + C_0| \geq p - 1$. Then $|B| + |C| + 1 = |B_0 + C_0| = p - 1$, and so $A \setminus C$ has only one element. By assumption, $|C| \leq |A|/2$, and so we must have $|A| = 2$ and $|C| = 1$.

Since $A$ is symmetric, it is of the form $\{-a,a\}$ with either $a$ or $-a$ the unique element of $C$; without loss of generality, we can assume that $C = \{1\}$ and $A = \{-1,1\}$. Since $p>3$, we must have $2 \in B$, and applying the Claim with $i=2$ and $j=-1$ gives $-1\sim 1$. This contradicts the fact that $C$ is an equivalence class.

\textbf{Case 2:} $|B_0 + C_0| \leq p-2$, and so we can apply Vosper's Theorem. This tells us that $B_0$ and $C_0$ are arithmetic progressions with the same common difference, which we may assume is 1 by scaling. Since $0$ is part of the arithmetic progression $B_0$ and $B_0 = -B_0$, both $1$ and $-1$ must be in $B_0$. However, $C_0$ is an arithmetic progression with common difference 1, so either $1$ or $-1$ must be in $C_0$. This contradicts the fact that $C \subseteq A$ and $A$ and $B$ are disjoint.

We conclude that the only two equivalence classes must be $A$ and $B$. Therefore, any vector $w$ which works with both $\e$ and $v$ must be constant on both $A$ and $B$, which means that $w$ is contained in the span of $\e$ and $v$, as required.
\end{proof}

As well as using that $n$ is odd, the proof of Proposition \ref{prop:all_sym_work} crucially relies on the fact that $v$ is symmetric. In view of this, it is tempting to conjecture that a converse is true; in other words, that the only vectors $v \in \F_2^n$ which work together with $\e$ are the symmetric vectors. This leads us to propose Conjecture \ref{conj:our_in_intro}. Indeed, if Conjecture \ref{conj:our_in_intro} were true, then Lemma \ref{lem:wlog_0all1} and Lemma \ref{lem:nothing_works_with_sym} would tell us immediately that $h_2(p) = 2$ for any prime $p$ with 2 as a primitive root.

Note that Conjecture \ref{conj:our_in_intro} cannot hold for $p=7$, as demonstrated by the following counterexample.
\begin{observation}\label{obs:counterexample_7}
In $\F_2^7$, the following two vectors work together:
\begin{align*}
    \e &= (0,1,1,1,1,1,1) \\
    v &= (0,1,1,0,0,0,0).
\end{align*}
\end{observation}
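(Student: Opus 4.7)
The plan is a direct finite verification. The claim concerns only $2^7 = 128$ vectors, so once the two conditions have been translated into pointwise form the proof is mechanical. Since $\e$ has a $0$ in coordinate $0$ and $1$'s elsewhere, one has (in $\F_2$) that $\e \cdot \sigma^k x = |x| + x_{-k}$, while $v \cdot \sigma^k x = x_{1-k} + x_{2-k}$. Hence a shift $k$ succeeds for $x$ precisely when
\[
    x_{-k} \equiv |x| \pmod 2 \quad \text{and} \quad x_{1-k} = x_{2-k}.
\]

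To cut the enumeration I would use two symmetries. First, cyclic invariance: if $k$ works for $\sigma^j x$ then $k+j$ works for $x$, so one representative per cyclic orbit suffices. Second, complementation: replacing $x$ by $x + (1,\dots,1)$ flips each of $|x|$, $x_{-k}$, $x_{1-k}$ and $x_{2-k}$, which preserves both conditions simultaneously; and since $7$ is odd, no cyclic orbit is fixed by this involution. A Burnside count gives $\tfrac{1}{7}(128 + 6 \cdot 2) = 20$ cyclic orbits of $\F_2^7$, which pair into $10$ classes under complement, so it is enough to verify one representative of weight in $\{0,1,2,3\}$ in each class.

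Finally I would display the check. The weight-$0$ and weight-$1$ cases are immediate ($k = 0$ works for $x = 0$, and $k = -i$ works for $x = e_i$). There are three weight-$2$ orbits (representatives $1100000$, $1010000$, $1001000$) and five weight-$3$ orbits (representatives $1110000$, $1101000$, $1100100$, $1100010$, $1010100$); for each of these eight vectors I would simply exhibit an explicit $k$ satisfying the two pointwise conditions above. The main ``obstacle'' here is purely bookkeeping --- there is no mathematical content beyond the translation and the two symmetry reductions --- and the remaining case check is short enough to complete by hand or by a one-line computer search.
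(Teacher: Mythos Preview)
Your proposal is correct. The paper does not supply a proof of this observation at all; it simply states it as a fact the reader may verify. Your direct finite check, together with the reductions by cyclic shift and complementation, is exactly the sort of verification intended, and your pointwise translation of the two orthogonality conditions is accurate.
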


We have verified Conjecture \ref{conj:our_in_intro} for $3 \leq n \leq 43$, and have some partial progress towards it for arbitrary primes. We will present our partial progress in the next section, and then show how we can use it to prove Theorem \ref{thm:main_theorem} in Section \ref{sec:proof}.

\section{Partial progress towards conjecture \ref{conj:our_in_intro}}
\label{sec:progress}
In this section, we show that any vector $v$ which is a counterexample to Conjecture \ref{conj:our_in_intro} must have certain properties.

\subsection{A further reformulation of the problem}
First, we provide a reinterpretation of the statement that $v$ works together with $\e$, which will help us to describe the structure of such a vector. 

To any vector $v\in \F_2^n$, we associate a Cayley (multi)digraph $G_v$ on vertex set $\Z/n\Z$, where we draw a directed edge from $i$ to $i+j$ exactly when $v_j = 1$. By Observation \ref{obs:0_in_slot_0}, we may restrict to vectors $v \in \F_2^n$ with $v_0 = 0$, which means that $G_v$ will have no self-loops. The relevance of $G_v$ to our problem is given by the following proposition.
\begin{proposition}\label{prop:graph_theory_formulation}
Let $n$ be odd. 
A vector $v\in \F_2^n$ with $v_0=0$ works together with $\e$ if and only if $G_v$ has no induced subgraph on an odd number of vertices where each vertex has odd outdegree.
\end{proposition}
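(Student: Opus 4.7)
The plan is to translate the joint-orthogonality condition into an outdegree statement by computing the two inner products explicitly in terms of the support $S = \{i : x_i = 1\}$ of a test vector $x$. Since $|\e| = n-1$ is even, one obtains $\e \cdot \sigma^k x \equiv |S| + x_{-k} \pmod 2$, so $\e \cdot \sigma^k x = 0$ precisely when $x_{-k} \equiv |S| \pmod 2$. A direct reindexing gives $v \cdot \sigma^{-t} x = \sum_{s \in S} v_{s-t}$, and when $t \in S$ this is exactly the outdegree of $t$ in the induced subgraph $G_v[S]$.

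For the direction \emph{a bad subgraph implies that $\e$ and $v$ fail to work together}, I would take $T$ with $|T|$ odd and all outdegrees in $G_v[T]$ odd, and let $x = \1_T$. Then $|S|=|T|$ is odd, so the only $k$ with $\e \cdot \sigma^k x = 0$ are $k = -t$ for $t \in T$; for each such $k$, the formula above shows $v \cdot \sigma^k x$ equals the outdegree of $t$ in $G_v[T]$, which is $1$. Hence no $k$ makes both inner products vanish.

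For the converse I would start with a counterexample $x$ and split on the parity of $|S|$. If $|S|$ is odd, the same calculation run in reverse shows that $S$ itself is a bad induced subgraph. If $|S|$ is even, the relevant $k$ are $k = -t$ for $t \notin S$, and the failure condition gives $\sum_{s \in S} v_{s-t} = 1$ for every such $t$. If $|v|$ is even, then for $t \in S^c$ the outdegree of $t$ in $G_v[S^c]$ equals $|v| - \sum_{s \in S} v_{s-t} = |v|-1$, which is odd, and since $|S^c| = n - |S|$ is odd, $S^c$ is a bad induced subgraph. If $|v|$ is odd, every vertex of $G_v$ already has odd outdegree $|v|$, so the full vertex set $\Z/n\Z$ (of odd size $n$) is itself a bad induced subgraph.

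The main obstacle is the even-$|S|$ case: one must notice that the counterexample's support need not itself be the bad subgraph, and that passing to the complement $S^c$ works when $|v|$ is even, while the entire vertex set $\Z/n\Z$ handles the subcase $|v|$ odd. Once this parity bookkeeping is in place the proposition follows from the elementary outdegree identity above.
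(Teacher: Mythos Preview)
Your proof is correct and follows essentially the same route as the paper. The paper handles the even-weight case more tersely: after observing that $|v|$ odd makes the whole of $G_v$ a bad subgraph, it simply asserts ``we may assume that $x$ has odd weight'' (implicitly by adding the all-ones vector, which leaves both inner products unchanged once $|v|$ is even). Your explicit passage to the complement $S^c$ is exactly this move spelled out, since replacing $x$ by $x + (1,\dots,1)$ swaps $S$ with $S^c$.
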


\begin{proof}
Suppose first that $v$ and $\e$ do not work together. Equivalently, there exists a vector $x \in \F_2^n$ such that any shift $\sigma^k{x}$ is non-orthogonal to at least one of $\e$ and $v$. Note that $v$ must have even weight, else the vertices of $G_v$ all have odd outdegree and $G_v$ itself is our required subgraph. In particular, we may assume that $x$ has odd weight. 

Let $A = \{i : x_i = 1\}$, and consider $G_v[A]$. We have that $\sigma^{-k}x$ is orthogonal to $\e$ exactly when $x_{k} = 1$. Thus, for each $k$ with $x_{k} = 1$, we must have $v \cdot \sigma^{-k}x = 1$. Now, note that 
\[
v \cdot \sigma^{-k}x = \sum_{i : x_i = 1} v_{i-k}= \sum_{i\in A} v_{i-k}=\sum_{i \in A: i, i-k\in A} 1,
\]
which is exactly the outdegree of $k$ in $G_v[A]$ modulo 2. Thus, $G_v[A]$ is our required subgraph.

Conversely, suppose that $A$ is a set with odd size such that every vertex in $G_v[A]$ has odd outdegree. Define the vector $x$ by $x = \sum_{i \in A} e_i$. As $A$ has odd size, $\e \cdot \sigma^{-k} x = 0$ if and only if $k \in A$. But for such a $k$, $v \cdot \sigma^{-k}x = 1$ as it is the outdegree of $k$ in $G_v[A]$ modulo 2. Thus, no shift of $x$ is orthogonal to both $\e$ and $v$, proving that $\e$ and $v$ do not work together.
\end{proof}

\begin{figure}
    \centering
    \begin{tikzpicture}
    \node[font=\bfseries] (0) at (90:2) {0};
    \node[font=\bfseries]  (1) at (90- 360/9:2) {1};
    \node[font=\bfseries]  (2) at (90 - 2*360/9:2) {2};
    \node (3) at (90 - 3*360/9:2) {3};
    \node[font=\bfseries] (4) at (90 - 4*360/9:2) {4};
    \node (5) at (90 - 5*360/9:2) {5};
    \node (6) at (90 - 6*360/9:2) {6};
    \node (7) at (90 - 7*360/9:2) {7};
    \node[font=\bfseries] (8) at (90 - 8*360/9:2) {8};
    
    \draw[->, thick] (0) -- (1);
    \draw[->] (0) -- (6);

    \draw[->, thick] (1) -- (2);
    \draw[->] (1) -- (7);

    \draw[->] (2) -- (3);
    \draw[->, thick] (2) -- (8);

    \draw[->] (3) -- (4);
    \draw[->] (3) -- (0);

    \draw[->] (4) -- (5);
    \draw[->, thick] (4) -- (1);

    \draw[->] (5) -- (6);
    \draw[->] (5) -- (2);

    \draw[->] (6) -- (7);
    \draw[->] (6) -- (3);

    \draw[->] (7) -- (8);
    \draw[->] (7) -- (4);

    \draw[->, thick] (8) -- (0);
    \draw[->] (8) -- (5);
    \end{tikzpicture}
    \caption{The Cayley graph $G_v$ for $v=(0,1,0,0,0,0,1,0,0)$. The graph induced on $\{0,1,2,4,8\}$ is of odd size and every vertex has odd outdegree so the vector $(1,1,1,0,1,0,0,0,1)$ shows that $\e$ and $v$ do not work together.}
    \label{fig:example_bad_subgraph}
\end{figure}
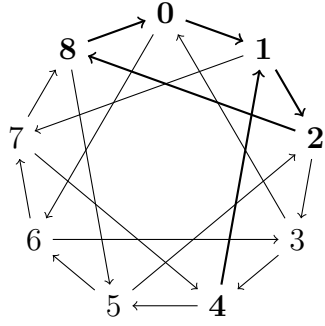

\begin{remark*}
Proposition \ref{prop:graph_theory_formulation} gives an easy proof of Proposition \ref{prop:all_sym_work}. Indeed, if $v$ is symmetric, then $G_v$ is a proper graph, and so, by the Handshaking Lemma, any subgraph with an odd number of vertices must have a vertex with even degree (in the subgraph).
\end{remark*}

By the following lemma, outdegree can also be replaced with indegree.
\begin{lemma}\label{lem:from_in_to_out}
The graph $G_v$ has an odd induced subgraph where each vertex has odd indegree if and only if it has an odd induced subgraph where each vertex has odd outdegree.
\end{lemma}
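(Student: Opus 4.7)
The plan is to exploit the fact that the negation map $\phi : \Z/n\Z \to \Z/n\Z$, $\phi(i) = -i$, carries the Cayley digraph $G_v$ to its reverse. Concretely, I would start with an odd-sized induced subgraph $G_v[S]$ in which every vertex has odd outdegree, and prove that the induced subgraph on $-S = \{-s : s \in S\}$ has odd size in which every vertex has odd indegree; the reverse direction is then the same argument applied to $-(-S) = S$.

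The heart of the argument is a one-line calculation. Let $A = \{j \in \Z/n\Z : v_j = 1\}$, so that $(k, \ell)$ is an arc of $G_v$ precisely when $\ell - k \in A$. For any $i \in S$, the indegree of the corresponding vertex $-i$ in $G_v[-S]$ is
$$
\bigl| \{ m \in -S : -i - m \in A\} \bigr| = \bigl| \{ j \in S : j - i \in A \} \bigr|,
$$
after the substitution $m = -j$. The right-hand side is exactly the outdegree of $i$ in $G_v[S]$. Since $|{-S}| = |S|$, the bijection $i \leftrightarrow -i$ thus transfers the "all outdegrees odd" condition on $G_v[S]$ to the "all indegrees odd" condition on $G_v[-S]$, and preserves the parity of the vertex count.

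There is no real obstacle here — the lemma is purely a structural symmetry of Cayley digraphs, reflecting the fact that $\phi$ is an isomorphism from $G_v$ onto $G_v^{\mathrm{rev}}$. The only thing to be careful about is matching the vertex $i$ in $S$ with the vertex $-i$ in $-S$, rather than trying to keep the same underlying set of vertices; once this is set up, both directions follow immediately.
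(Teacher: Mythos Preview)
Your proposal is correct and matches the paper's own argument essentially line for line: the paper also passes from the vertex set $S$ to $-S$ and observes that an edge from $i$ to $j$ in $G_v[S]$ corresponds to an edge from $-j$ to $-i$ in $G_v[-S]$, so that indegrees and outdegrees are interchanged while the vertex count stays odd. Your write-up is slightly more explicit in unpacking the indegree/outdegree computation, but the idea is identical.
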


\begin{proof}
If there is an edge from $i$ to $j$ in $G_v[A]$, then there is an edge from $-j$ to $-i$ in $G_v[-A]$. Hence, passing from $G_v[A]$ to $G_v[-A]$ swaps the indegrees and the outdegrees (and the number of vertices remains odd).
\end{proof}

We call an induced subgraph $H$ of $G_v$ a \textit{bad subgraph} if
\begin{enumerate}
    \item $H$ has an odd number of vertices and
    \item at least one of the following holds:
    \begin{enumerate}
        \item each vertex in $H$ has odd outdegree within $H$;
        \item each vertex in $H$ has odd indegree within $H$.
    \end{enumerate}
\end{enumerate}
In particular, an odd induced cycle is a bad subgraph.

Proposition \ref{prop:graph_theory_formulation} and Lemma \ref{lem:from_in_to_out} imply that for $v \in \F_2^n$ with $v_0=0$, $v$ works with $\e$ if and only if $G_v$ has no bad subgraphs. An example of a bad subgraph is given in Figure \ref{fig:example_bad_subgraph}.

In what follows, we will assume that certain non-symmetric vectors work together with $\e$ and will try to find a contradiction by finding a bad subgraph. 
\subsection{Restricting the edges of the digraph}
For the remainder of the argument, we will work in $\F_2^p$, where $p$ is an odd prime. We will be searching for a bad subgraph in $G_v$, where $v \in \F_2^p$ is a non-symmetric vector with $v_0 = 0$. Given such a $v$ that works with $\e$, $G_v$ is a directed graph which may include two-way edges. We will first show that we may assume that this does not happen, which means that $G_v$ is a proper digraph.
\begin{lemma}\label{lem:no_twoway_edges}
Suppose that $v \in \mathbb{F}_2^p$ is non-symmetric with $v_0 = 0$. Suppose that there is simultaneously $i$ with $v_i = v_{-i} = 1$ and $j\neq 0$ with $v_j = v_{-j} = 0$. Then $G_v$ has a bad subgraph.
\end{lemma}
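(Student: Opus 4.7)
The plan is to find a bad $3$-vertex induced subgraph in $G_v$. By vertex-transitivity of Cayley graphs, I may assume such a subgraph contains $0$. Two especially clean families of candidate bad $3$-sets arise from the structural assumptions: since $v_i=v_{-i}=1$ forces a two-way edge between $s$ and $s+i$, the set $\{0,s,s+i\}$ is bad iff $v_s\neq v_{s+i}$ and $v_{-s}=v_{-s-i}=0$; dually, since $v_j=v_{-j}=0$ forces \emph{no} edge between $s$ and $s+j$, the set $\{0,s,s+j\}$ is bad iff $v_s\neq v_{s+j}$ and $v_{-s}=v_{-s-j}=1$. Taking $s=\pm i$ in these criteria also shows that the $3$-sets $\{0,\pm i,\pm i\pm j\}$ between them yield a bad set whenever $v$ fails to be symmetric at $i+j$ or at $i-j$.

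Using the non-symmetry of $v$, I would fix $s_0$ with $v_{s_0}=1$ and $v_{-s_0}=0$, and consider the cyclic walk $s_0,\,s_0+i,\,s_0+2i,\ldots$ through $\Z/p\Z$. For each step $k$ I track the state $(v_{s_0+ki},v_{-s_0-ki})\in\{0,1\}^2$, which begins at $(1,0)$. At the step where $s_0+ki=j$ the state is $(0,0)$, so the state is forced to change at some earlier point; any transition directly from $(1,0)$ to $(0,0)$ immediately gives a bad $3$-set $\{0,\,s_0+(k-1)i,\,s_0+ki\}$ of the first family. If the first such state change is instead to $(0,1)$ or $(1,1)$, I would invoke the second family (with $j$ in place of $i$) and the $3$-sets $\{0,\pm i,\pm i\pm j\}$ above to either extract a bad set directly or obtain further constraints on $v$.

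Collecting all the constraints produced by the assumption that no bad $3$-set of any of the above forms exists, one finds that $v$ is symmetric at each of $i\pm j$, that both of $s_0\pm i$ lie in $A\cup(-A)$ (where $A=\{k:v_k=1\}$), and that neither of $s_0\pm j$ lies in $A\cap(-A)$. I would then iterate the walk argument with $s_0$ replaced by other points that the constraints force to be asymmetric in the same way, aiming to reduce to a contradiction with non-symmetry. The principal obstacle I expect is exactly this final combinatorial case analysis: carefully managing how the walk through $\Z/p\Z$ interacts simultaneously with the $\pm i$ and the $\pm j$ structures to show that the constraint system derived from ``no bad $3$-set of any of the considered forms exists'' is only satisfiable when $v$ is symmetric, contradicting the hypothesis of the lemma.
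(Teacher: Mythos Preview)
Your proposal correctly zeroes in on bad $3$-vertex subgraphs, and your two families (together with the special sets $\{0,\pm i,\pm i\pm j\}$) do capture the right objects. In fact, if you unwind your walk/state criteria for both families and for both the outdegree and indegree versions of ``bad'', you find that the combined statement ``no bad $3$-set of any of the considered forms exists'' is \emph{equivalent} to the single additive constraint
\[
A + B \;\subseteq\; A \cup B,
\]
where $A=\{k\neq 0: v_k=v_{-k}=1\}$ and $B=\{k\neq 0: v_k=v_{-k}=0\}$. (Your first family, stepping by $i\in A$, says one cannot pass directly between $B\cup\{0\}$ and the asymmetric set $C\cup(-C)$; your second family, stepping by $j\in B$, says the same with $A$ in place of $B\cup\{0\}$. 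Both rearrange to $a+b$ being symmetric for every $a\in A$, $b\in B$.) This is exactly the Claim in the paper's proof, and it is as far as bad $3$-sets alone will carry you.

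The genuine gap is the endgame. From $A+B\subseteq A\cup B$ with $A,B$ nonempty, symmetric, disjoint and $A\cup B\cup\{0\}\subsetneq\Z/p\Z$ (which non-symmetry guarantees), the paper finishes by an additive-combinatorics punch: adjoining $0$ to each set forces equality in Cauchy--Davenport, and then Vosper's theorem says $A\cup\{0\}$ and $B\cup\{0\}$ are arithmetic progressions with the same common difference $d$; symmetry then puts $\pm d$ into both, a contradiction. Your ``iterate the walk, replacing $s_0$ by other forced-asymmetric points'' plan is, in effect, an attempt to prove this special case of Vosper's theorem by hand, and you have not indicated how the iteration terminates or what invariant it drives down. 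Since the constraint system you derive is precisely $A+B\subseteq A\cup B$ and nothing more, any purely elementary finish would have to reprove the relevant instance of Vosper; absent that, the argument is incomplete. The missing idea is therefore not more case analysis on $3$-sets but the appeal to Vosper (or an equivalent structural result) to convert the sumset containment into a contradiction.
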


\begin{proof}
Suppose that $v \in \F_2^p$ works with $\e$; in other words, $G_v$ has no bad subgraphs. Define the sets $A = \{i : v_i = v_{-i} = 1\}$ and $B = \{j \neq 0 : v_j = v_{-j} = 0\}$. The condition on $v$ is therefore that both $A$ and $B$ are nonempty.

To prove Lemma \ref{lem:no_twoway_edges}, we will use the following claim.
\begin{claim*}
$A + B \subseteq A \dju B$.
\end{claim*}
\begin{claimproof}[Proof of Claim]
Suppose that $i \in A$ and $j \in B$. We must show that $i + j \in A \dju B$. Assume this is not the case and consider the subgraph of $G_v$ induced on $\{-i, 0, j\}$. By our assumptions, there is no edge between $0$ and $j$, a two-way edge between $0$ and $-i$ and a one-way edge between $-i$ and $j$. 

There are two cases, depending on the direction of the edge between $-i$ and $j$. If it is directed from $-i$ to $j$, then each vertex has indegree 1. Otherwise, each vertex has outdegree 1. In either case, $G_v[\{-i,0,j\}]$ is a bad subgraph.
\end{claimproof}

Now, we have $A + B \subseteq A \dju B$, which means that $A_0 + B_0 \subseteq A \dju B \dju \{0\}$, where $A_0 = A \dju \{0\}$ and $B_0 = B \dju \{0\}$. We know that $|A_0| > 1$ and $|B_0| > 1$ since $A$ and $B$ are assumed to be nonempty. Furthermore, $|A_0 + B_0| \leq |A| + |B| + 1 \leq p-2$, since otherwise $A \dju B \dju \{0\} = \Z/p\Z$ and $v$ is symmetric.

Thus, Vosper's Theorem allows us to conclude that $A_0$ and $B_0$ are both arithmetic progressions with the same common difference $d$. Since $A = -A$ and $B = -B$ are both symmetric, we deduce that $\{d, -d\}$ is contained in both $A_0$ and $B_0$, which contradicts the fact that they are disjoint.
\end{proof}

If $v$ is a vector such that there is at least one value of $i$ with $v_i = v_{-i} = 1$, we will say that $v$ is \emph{large}. Otherwise, $v$ is \emph{small}. In particular, $v$ is small if and only if $G_v$ is a proper digraph. Lemma \ref{lem:no_twoway_edges} tells us that if $v$ is non-symmetric and works with $\e$, either $v$ or $\e + v$ must be small. 

Consequently, we will only consider small vectors $v$. Given such a $v$, we will look for bad subgraphs of $G_v$. We will do this by looking at the shortest cycle in $G_v$ (throughout this paper, we use cycle to refer to a directed cycle). This cycle must have length at least 3 because $G_v$ is a proper digraph, and must be induced else $G_v$ would have a shorter cycle. We will say that the \emph{girth} of $G_v$, or (abusing notation) the \emph{girth} of $v$, is the length of the shortest cycle in $G_v$.

For the results below, we assume the following standard notation.
\begin{assump*}
We will use $v$ to denote a nonzero small vector in $\mathbb{F}_2^p$ with $v_0 = 0$ that works together with $\e$. Let $A = A^{(v)}$ be the set $\{i : v_i = 1\}$. Since $v$ is small, $A$ and $-A$ are disjoint. Let $A_0 = A^{(v)}_0$ be $A \dju \{0\}$ and let $k$ be the girth of $G_v$.
\end{assump*} 
We may immediately make some simple observations.

\begin{lemma}\label{lem:trivial_facts_about_k}
Using the standard notation, the following hold.
\begin{enumerate}
    \item $k$ is even.
    
    \item $0 \notin (k-2)A_0 + A$, or in other words
    \begin{equation}\label{eqn:bound_from_girth}
        |(k-2)A_0 + A| \leq p-1.
    \end{equation}
\end{enumerate}
\end{lemma}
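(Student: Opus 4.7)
For part (1), I would exploit the assumption that $G_v$ contains no bad subgraph. Let $C$ be a directed cycle of length $k$ in $G_v$; $C$ must be induced, since any additional edge between two of its vertices, in either direction, combines with a subpath of $C$ to produce a strictly shorter directed cycle. Hence the induced subgraph on $V(C)$ is $C$ itself, a cycle in which every vertex has outdegree exactly $1$. If $k$ were odd, this would be a bad subgraph on an odd number of vertices all of whose outdegrees are odd, contradicting the fact that $v$ works with $\e$. Thus $k$ must be even.

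For part (2), I would argue by contradiction: suppose there exist $a_1,\dots,a_{k-2}\in A_0$ and $a\in A$ with $a_1+\dots+a_{k-2}+a=0$. Setting $s_0=0$ and $s_j=a_1+\dots+a_j$, the sequence $s_0,s_1,\dots,s_{k-2},0$ is a ``closed walk with stays'' in $G_v$: each transition $s_{j-1}\to s_j$ either leaves the vertex fixed (when $a_j=0$) or traverses an edge of $G_v$ (when $a_j\in A$, since $G_v$ is the Cayley digraph with connection set $A$), and the final transition from $s_{k-2}=-a$ to $0$ is always an edge since $a\in A$. Collapsing the stays yields a genuine closed walk in $G_v$ of length $\ell+1$, where $\ell=|\{j : a_j\neq 0\}|\leq k-2$.

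If $\ell=0$, the walk is just $0\to 0$, forcing $a=0$, which contradicts $a\in A$. Otherwise we obtain a closed walk of length between $2$ and $k-1$, and the standard argument of locating the first repeated vertex and iterating extracts a simple directed cycle of length at most $k-1$, contradicting the definition of $k$ as the girth. The displayed inequality $|(k-2)A_0+A|\leq p-1$ then follows immediately, since $(k-2)A_0+A$ is a proper subset of $\Z/p\Z$. I do not anticipate any serious obstacle here; the key observation is simply that a representation of $0$ in $(k-2)A_0+A$ is the same thing as a short closed walk through $0$ in the Cayley digraph $G_v$, so the girth bound translates directly into a sumset bound.
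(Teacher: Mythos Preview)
Your proof is correct and follows essentially the same approach as the paper's own proof, which is very terse: for part (1) it simply notes that the shortest cycle is an induced subgraph with all outdegrees equal to $1$, hence must be even, and for part (2) it observes that a representation of $0$ as a sum of $k-1$ elements of $A_0$ not all zero yields a cycle of length at most $k-1$. You have supplied the details (why the shortest cycle is induced, how to extract a simple cycle from the closed walk) that the paper leaves implicit, but the underlying argument is the same.
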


\begin{proof}\leavevmode
\begin{enumerate}
    \item The shortest cycle in $G_v$ is an induced subgraph in which each vertex has indegree and outdegree 1. Thus, as $v$ and $\e$ work together, such a subgraph must have an even number of vertices, which means that $k$ must be even.
    
    \item If 0 can be written as a sum of $k-1$ elements of $A_0$, not all zero, then we get a  corresponding cycle of length at most $k-1$ in $G_v$, which we assume does not exist. The inequality (\ref{eqn:bound_from_girth}) follows immediately. \qedhere
\end{enumerate}
\end{proof}
The next proposition gives bounds on the size of $A$.
\begin{proposition}\label{prop:set_is_large}
Using the standard notation given above,
\begin{equation}\label{eqn:bounds_on_set_size} 
\frac{p}{k} < |A| < \frac{p}{k-1}.
\end{equation}
\end{proposition}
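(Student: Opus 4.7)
The upper bound $|A| < p/(k-1)$ will follow from an iterated application of the Cauchy--Davenport inequality together with Lemma \ref{lem:trivial_facts_about_k}(2). I claim by induction on $j$ that $|jA_0| \geq \min(p, j|A|+1)$: the base case $j=0$ is trivial, and the inductive step uses $|(j+1)A_0| = |jA_0 + A_0| \geq \min(p, |jA_0|+|A_0|-1) = \min(p, |jA_0|+|A|)$ by Cauchy--Davenport. Taking $j = k-2$ and then applying Cauchy--Davenport once more (with $A$ rather than $A_0$) gives $|(k-2)A_0 + A| \geq \min(p, (k-1)|A|)$. Since Lemma \ref{lem:trivial_facts_about_k}(2) guarantees $|(k-2)A_0 + A| \leq p-1 < p$, the minimum cannot be $p$, forcing $(k-1)|A| \leq p-1$ and hence $|A| < p/(k-1)$.

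For the lower bound $|A| > p/k$, my plan is to argue by contradiction: assume $k|A| \leq p$ and construct a bad subgraph of $G_v$, contradicting Proposition \ref{prop:graph_theory_formulation}. Since $k$ is even, $k+1$ is odd, so the natural target is a bad subgraph of the form $C \cup \{u\}$, where $C = \{s_0, s_1, \ldots, s_{k-1}\}$ is an induced $k$-cycle in $G_v$ (which exists because the girth equals $k$) and $u \in \mathbb{Z}/p\mathbb{Z}$ is a carefully chosen extra vertex.

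For $C \cup \{u\}$ to be bad, every vertex must have odd outdegree in the induced subgraph. Each $s_j$ already has outdegree $1$ in $G_v[C]$, so the outdegree stays odd in $G_v[C \cup \{u\}]$ precisely when $u \notin s_j + A$ for every $j$, equivalently $u \notin C+A$. Since $|C + A| \leq k|A| \leq p$, the set $\mathbb{Z}/p\mathbb{Z} \setminus (C+A)$ contains at least $p - k|A|$ candidate vertices. In addition, $u$ itself must have odd outdegree $|(u+A) \cap C|$ in the subgraph.

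The main obstacle is to verify that among the candidates $u \notin C+A$, at least one satisfies the parity condition that $|(u+A) \cap C|$ is odd. The natural tool is the double-counting identity $\sum_{u \in \mathbb{Z}/p\mathbb{Z}} |(u+A) \cap C| = |A|\cdot|C| = k|A|$, together with an analysis of how this mass is distributed between $C+A$ and its complement; the induced-cycle condition on $C$ (forcing many non-consecutive differences $s_j - s_i$ to avoid $A$) should give enough additive rigidity to produce a $u$ of the required parity. If this direct parity argument fails in an edge case (e.g.\ when $k|A|$ is close to $p$), I would fall back on a more delicate bad subgraph, for instance of size $k + 2r + 1$ built from several translates of $C$ together with extra vertices, or appeal to a Fourier-analytic argument on $\mathbb{F}_2^p$ to extract the required parity from the additive structure.
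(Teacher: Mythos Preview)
Your upper bound is correct and is exactly the paper's argument: iterate Cauchy--Davenport $k-2$ times on $A_0$ and then once more with $A$, then invoke Lemma~\ref{lem:trivial_facts_about_k}(2).

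For the lower bound, however, there is a genuine gap. Your plan is to adjoin a single vertex $u$ to a shortest $k$-cycle $C$ and make $C\cup\{u\}$ bad. This succeeds immediately whenever some $u$ has \emph{exactly one} edge (in either direction) to $C$: a single out-edge gives an odd-outdegree bad subgraph, and a single in-edge gives an odd-indegree one. The case you have not handled is when some $u$ has \emph{no} edges at all to $C$. Such a $u$ lies in your candidate set $\mathbb{Z}/p\mathbb{Z}\setminus(C+A)$ but has $|(u+A)\cap C|=0$, which is even, so it does not yield a bad subgraph; and your proposed fixes (the double-count $\sum_u |(u+A)\cap C|=k|A|$, Fourier analysis, larger configurations) do not rule this situation out. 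Indeed, once every vertex has at least two edges to $C$, the double count $2k|A|=\sum_u(\text{edges between }u\text{ and }C)\geq 2p$ finishes instantly; the zero-edge case is the entire difficulty.

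The paper isolates this as a separate lemma (Lemma~\ref{lem:at_least_two}) and dispatches the zero-edge case with a \emph{swapping} argument that your proposal is missing. Writing $C=\{0,a_1,a_1+a_2,\dots\}$ with $a_1+\dots+a_k=0$, one transposes adjacent edges $a_1\leftrightarrow a_2$, then $a_1\leftrightarrow a_3$, and so on; each swap changes at most one vertex of the cycle, hence changes the number of edges between $u$ and the cycle by at most one. After $k-1$ swaps the cycle has become the translate $C-a_1$, and since $p$ is prime the iterated translates $C-ja_1$ exhaust $\mathbb{Z}/p\mathbb{Z}$. So at some first swap the count goes from $0$ to $1$, producing a cycle to which $u$ has exactly one edge --- a bad subgraph after all. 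With this lemma in hand, the paper's double count gives $p\leq k|A|$, and strictness follows from $k\nmid p$.
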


The proof of Proposition \ref{prop:set_is_large} relies on the following lemma.
\begin{lemma}\label{lem:at_least_two}
Suppose that $C$ is a $k$-cycle in $G_v$ where $v$ and $k$ are as given in the standard notation. For any $i \not \in C$, there are at least two edges between $i$ and $C$ (ignoring direction).
\end{lemma}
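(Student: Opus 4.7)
The plan is to assume, for contradiction, that some $i \notin C$ has at most one edge to $C$, and to use the induced subgraph $H = G_v[C \cup \{i\}]$ as the main object. Since the girth $k$ is even (Lemma \ref{lem:trivial_facts_about_k}), $|V(H)| = k+1$ is odd, making $H$ a candidate to be a bad subgraph. I will then perform a careful parity analysis on the in- and out-degrees within $H$.

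Concretely, write $C = \{c_0,c_1,\dots,c_{k-1}\}$ with $c_j \to c_{j+1}$, and set
\[
e_{\text{out}} = |\{j : c_j - i \in A\}|, \qquad e_{\text{in}} = |\{j : i - c_j \in A\}|.
\]
Because $v$ is small, $A \cap (-A) = \emptyset$, so no index $j$ contributes to both counts; the total number of edges between $i$ and $C$ (ignoring direction) is $e_{\text{out}} + e_{\text{in}}$. In $H$, each $c_j$ has outdegree $1 + \mathbf{1}[c_j \to i]$ (the cycle successor, plus a possible edge to $i$) and indegree $1 + \mathbf{1}[i \to c_j]$, while $i$ itself has outdegree $e_{\text{out}}$ and indegree $e_{\text{in}}$.

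The main case is $e_{\text{out}} + e_{\text{in}} = 1$. If the unique edge is $i \to c_m$, then every $c_j$ has outdegree $1$ (no edges from $C$ to $i$) and $i$ has outdegree $1$; all $k+1$ outdegrees are odd and $H$ is bad, contradicting that $v$ works with $\e$. Symmetrically, if the edge goes $c_m \to i$, then every indegree in $H$ equals $1$, and the indegree clause of the definition of a bad subgraph is triggered. Either way, Proposition \ref{prop:graph_theory_formulation} yields the desired contradiction.

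The remaining case is $e_{\text{out}} = e_{\text{in}} = 0$, and this is the step I expect to be the main obstacle, since $H$ itself is no longer bad: the vertex $i$ becomes isolated in $H$ with both in- and outdegree $0$. My plan to handle this is to exploit the vertex-transitivity of the Cayley digraph $G_v$: the translate $C' = C + (i - c_0)$ is another $k$-cycle, now passing through $i$, and combining the edges between $C$, $C'$ and $\{i\}$ (or considering a larger induced subgraph of the form $C \cup \{i\}$ together with a carefully chosen auxiliary vertex disjoint from $C$) should, via a further parity argument, still produce an induced subgraph violating Proposition \ref{prop:graph_theory_formulation}. Identifying the right auxiliary vertex (or the right modification of $C$ that keeps it a $k$-cycle while forcing $i$ to become adjacent) is the delicate part, but given the density forced by the girth bound in Lemma \ref{lem:trivial_facts_about_k}, such a modification should be available.
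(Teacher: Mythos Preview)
Your treatment of the one-edge case ($e_{\text{out}}+e_{\text{in}}=1$) is correct and coincides with the paper's argument: $G_v[C\cup\{i\}]$ is a bad subgraph via the outdegree clause or the indegree clause according to the direction of the single edge.

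The zero-edge case, however, is a genuine gap. What you have written there is a plan, not a proof, and the specific ideas you suggest do not obviously close the gap. Translating $C$ so that it passes through $i$ gives no information about edges between $i$ and the \emph{original} $C$; and looking at $G_v$ on $C\cup C'\cup\{i\}$ produces a subgraph with many uncontrolled edges (between $C$ and $C'$) whose parities you have no handle on. Your fallback appeal to ``density forced by the girth bound'' is also problematic: Lemma~\ref{lem:trivial_facts_about_k} gives only an \emph{upper} bound on sumset sizes, while the \emph{lower} bound $|A|>p/k$ in Proposition~\ref{prop:set_is_large} is precisely what the present lemma is used to establish, so invoking it here would be circular.

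The paper's resolution of the zero-edge case is a different and sharper idea. Write $C=\{0,a_1,a_1+a_2,\dots,a_1+\dots+a_{k-1}\}$ with $a_j\in A$ and $\sum a_j=0$. Swapping $a_1$ with $a_2$ replaces the single vertex $a_1$ by $a_2$, yielding another $k$-cycle $C_1$ (still induced, by minimality of $k$). A single swap changes the edge count between $i$ and the cycle by at most one; if it became one, the already-proved one-edge case would give a bad subgraph. Hence the count stays at zero. Iterating the swaps $k-1$ times produces $C_{k-1}=C-a_1$, and repeating the whole procedure gives $C-ja_1$ for every $j$. Since $p$ is prime and $a_1\neq 0$, these translates cover all of $\Z/p\Z$, so $i$ would have no edges to any vertex at all, contradicting $A\neq\emptyset$. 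The key mechanism you are missing is this one-vertex-at-a-time sliding of the cycle, which lets the one-edge case do all the work.
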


\begin{proof}
If there is one edge from $i$ to $C$, then $C \dju \{i\}$ induces a bad subgraph as each vertex has outdegree 1. If there is one edge from $C$ to $i$, then $C \dju \{i\}$ induces a bad subgraph as each vertex has indegree 1. In either case, we find a bad subgraph, contradicting the assertion that $v$ and $\e$ work together.

Suppose instead that there are no edges at all between $i$ and $C$. Without loss of generality (since $G_v$ is vertex transitive), we may assume $0 \in C$ and write $C$ as $C = \{0, a_1, a_1 + a_2, \dots, a_1 + \dotsb + a_{k-1}\}$ for some $a_1, \dots, a_k \in A$ with $a_1 + \dotsb + a_k = 0$.

Consider doing a single \emph{swap} of $a_1$ with $a_2$; that is, we replace the order of the edges labelled $a_1$ and $a_2$ in the cycle $C$ to get a new cycle $C_1$. Thus, $C_1 = C \setminus \{a_1\} \cup \{a_2\}$. Observe that the subgraph induced on $C_1$ is also a $k$-cycle, as any other edges between vertices would create a shorter cycle, and this does not exist by the assumption that $C$ is a shortest cycle. Note that this swap changes the number of edges between $i$ and the cycle by at most one, since at most one vertex is changed. There cannot be an edge between the new vertex $a_2$ and $i$ since that would put us in the earlier case, so we find that $i$ is not connected to $C_1$. 

We can continue doing this; swap $a_1$ with $a_3$ to form $C_2 = \{0, a_2, a_2+a_3, a_1+a_2+a_3, \dots, a_1 + \dotsb + a_{k-1}\}$, and again $i$ can have no edges to or from $C_2$. After $k-1$ iterations, we form the cycle $C_{k-1} = \{0, a_2, a_2 + a_3, \dots, a_2 + a_3 + \dotsb + a_k\}$, and there are no edges between $i$ and $C_{k-1}$. Since $\sum_{i} a_i = 0$, we can rewrite this as $C_{k-1} = \{a_1 - a_1, a_1 + a_2 - a_1, \dots, a_1 + a_2 + \dotsb + a_{k-1} - a_1, 0 - a_1\}$ so $C_{k-1} = C - a_1$ (having shifted any fixed starting point).

If we consider $C_{k-1}$ as starting at $-a_1$, the first edge to $0$ again corresponds to adding $a_1$ and we can repeat this procedure, but starting with $C_{k-1}$. This gives the cycle $C_{2(k-1)} = C - 2 a_1$, and we can continue iterating. Since $p$ is prime and $a_1 \not = 0$, we will see the family of cycles $\{ C - j : j \in \Z/p\Z\}$, and in particular every point, is in some cycle. 

However, throughout this procedure, $i$ will never be connected to any point in any of these cycles. This contradicts the assertion that every element of $\Z/p\Z$ is contained in some cycle, in view of the fact that $i$ is connected to every element of $i + A$.

Thus, there must be at least two edges between $i$ and $C$.
\end{proof}

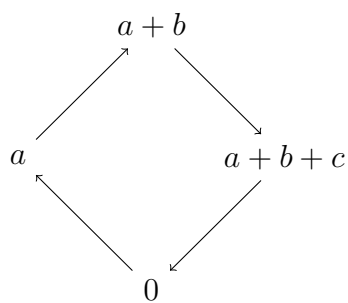
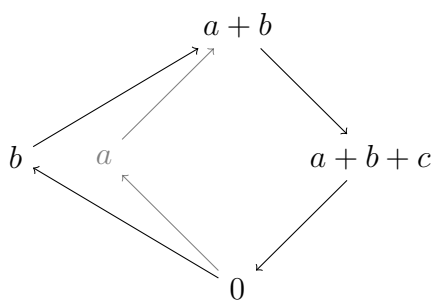
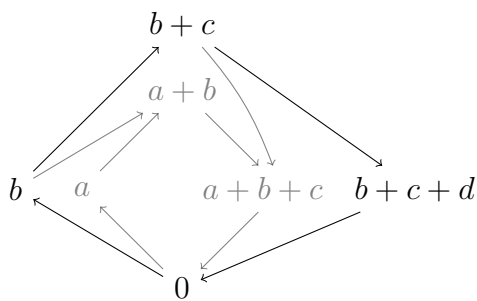
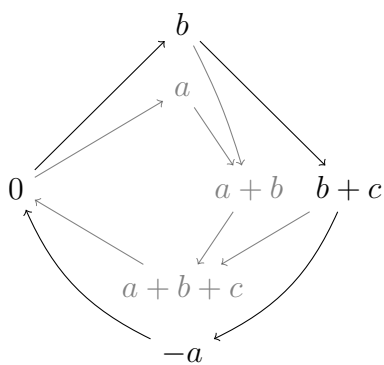
\begin{figure}
    \centering
    \begin{subfigure}[b]{0.5\textwidth}
        \begin{tikzpicture}[scale=\textwidth/6cm]
         \clip (-3, -2) rectangle (3, 3);
            \node (0) at (0,-1.5) {$0$};
            \node (a) at (-1.5,0) {$a$};
            \node (a+b) at (0,1.5) {$a+b$};
            \node (a+b+c) at (1.5,0) {$a+b+c$};
            
            \draw[->] (0) -- (a);
            \draw[->] (a) -- (a+b);
            \draw[->] (a+b) -- (a+b+c);
            \draw[->] (a+b+c) -- (0);
        \end{tikzpicture}
        \caption{The induced 4-cycle $C$.}
    \end{subfigure}
    \begin{subfigure}[b]{0.5\textwidth}
    \begin{tikzpicture}[scale=\textwidth/6cm]
     \clip (-3, -2) rectangle (3, 3);
        \node (0) at (0,-1.5) {0};
        \node (b) at (-2.5, 0) {$b$};
        \node[opacity=0.5] (a) at (-1.5,0) {$a$};
        \node (a+b) at (0,1.5) {$a+b$};
        \node (a+b+c) at (1.5,0) {$a+b+c$};
        
        \draw[->, opacity=0.5] (0) -- (a);
        \draw[->, opacity=0.5] (a) -- (a+b);
        \draw[->] (a+b) -- (a+b+c);
        \draw[->] (a+b+c) -- (0);
        \draw[->] (0) -- (b);
        \draw[->] (b) -- (a+b);
    \end{tikzpicture}
    \caption{The cycle $C_1$ formed by swapping $a$ with $b$.}
    \end{subfigure}
    \begin{subfigure}[b]{0.5\textwidth}
    \begin{tikzpicture}[scale=\textwidth/8cm]
     \clip (-3, -2) rectangle (5, 3);
        \node (0) at (0,-1.5) {0};
        \node (b) at (-2.5, 0) {$b$};
        \node[opacity=0.5] (a) at (-1.5,0) {$a$};
        \node[opacity=0.5] (a+b) at (0,1.5) {$a+b$};
        \node[opacity=0.5] (a+b+c) at (1.5,0) {\hspace{-5mm}$a+b+c$};
        \node (b+c) at (0,2.5) {$b+c$};
        \node (b+c+d) at (3.5,0) {$b+c+d$};
        
        \draw[->, opacity=0.5] (0) -- (a);
        \draw[->, opacity=0.5] (a) -- (a+b);
        \draw[->, opacity=0.5] (a+b) -- (a+b+c);
        \draw[->, opacity=0.5] (a+b+c) -- (0);
        \draw[->] (0) -- (b);
        \draw[->, opacity=0.5] (b) -- (a+b);
        \draw[->] (b) -- (b+c);
        \draw[->, opacity=0.5] (b+c) to [bend left=10] (a+b+c);
        \draw[->] (b+c) -- (b+c+d);
        \draw[->] (b+c+d) -- (0);
    \end{tikzpicture}
    \caption{The cycle $C_3 = C - a$.}
    \end{subfigure}
        \begin{subfigure}[b]{0.5\textwidth}
    \begin{tikzpicture}[scale=\textwidth/8cm]
     \clip (-4, -3) rectangle (4, 3);
        \node (0) at (-2.5,0) {0};
        \node (b) at (0, 2.5) {$b$};
        \node[opacity=0.5] (a) at (0,1.5) {$a$};
        \node[opacity=0.5] (a+b) at (1,0) {$a+b$};
        \node[opacity=0.5] (a+b+c) at (0,-1.5) {$a+b+c$};
        \node (b+c) at (2.5,0) {$b+c$};
        \node (b+c+d) at (0,-2.5) {$-a$};
        
        \draw[->, opacity=0.5] (0) -- (a);
        \draw[->, opacity=0.5] (a) -- (a+b);
        \draw[->, opacity=0.5] (a+b) -- (a+b+c);
        \draw[->, opacity=0.5] (a+b+c) -- (0);
        \draw[->] (0) -- (b);
        \draw[->, opacity=0.5] (b) to [bend left=5] (a+b);
        \draw[->] (b) -- (b+c);
        \draw[->, opacity=0.5] (b+c) -- (a+b+c);
        \draw[->] (b+c) to [bend left =20] (b+c+d);
        \draw[->] (b+c+d) to [bend left =20] (0);
    \end{tikzpicture}
    \caption{The cycle $C_3 = C - a$ after changing the point of view.}
    \end{subfigure}
    \caption{Various snapshots of $C$ throughout the swapping process.}
\end{figure}

\begin{proof}[Proof of Proposition \ref{prop:set_is_large}]
The upper bound follows immediately from (\ref{eqn:bound_from_girth}) and $k-2$ applications of the Cauchy-Davenport inequality.

For the lower bound, we will use Lemma \ref{lem:at_least_two}. Let $C$ be the vertex set of a cycle of length $k$ in $G_v$. We will count the number of pairs $(i, j)$ with $i \in \Z/p\Z$ and $j \in C$ such that there is an edge between $i$ and $j$. Each element of $C$ has $|A|$ outedges and $|A|$ inedges. Thus, there are $2|A|$ pairs for each $j \in C$, and so $2k|A|$ pairs in total. 

On the other hand, Lemma \ref{lem:at_least_two} tells us that there are at least 2 edges for each $i \in \Z/p\Z$, and so there are at least $2p$ edges. Therefore, $2p \leq 2k|A|$, which rearranges to give the lower bound claimed in Proposition \ref{prop:set_is_large}. 

To see the strict inequality, observe that $k$ is not a factor of $p$; indeed, if $k = p$ then $C = \Z/p\Z$ induces an odd cycle.
\end{proof}

\subsection{Eliminating large girth}
We will continue to restrict the possible small non-symmetric vectors $v$ that can work together with $\e$. Throughout this section we will assume the standard notation. Recall that, $A = \{i : v_i = 1\}$ and $A_0 = A \dju \{0\}$, and since $v$ is small, $A$ and $-A$ are disjoint. Denote again $G_v$ for the Cayley graph generated by $A$ as defined before Proposition \ref{prop:graph_theory_formulation}. The main result of this subsection is an upper bound on the girth $k$ of $G_v$. 
\begin{proposition}\label{prop:girth_is_small}
If a small $v \neq 0$ works together with $\e$, then the girth of $v$ is 4 or 6.
\end{proposition}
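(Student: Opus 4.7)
The plan is to argue by contradiction: assume the girth $k$ satisfies $k \geq 8$ and derive the existence of a bad subgraph in $G_v$, contradicting the assumption that $v$ works with $\e$. Since $k$ is even by Lemma \ref{lem:trivial_facts_about_k}(1), ruling out $k \geq 8$ leaves only $k \in \{4,6\}$.

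First, I would combine the two constraints already available. Lemma \ref{lem:trivial_facts_about_k}(2) gives $|(k-2)A_0 + A| \leq p-1$. Applying the Cauchy-Davenport inequality (Theorem \ref{thm:cauchy-davenport}) $k-2$ times (the first $k-3$ sums of $A_0$ with itself, then one last sum with $A$) yields
\[
|(k-2)A_0 + A| \;\geq\; \min\bigl((k-1)|A|,\; p\bigr),
\]
and hence $(k-1)|A| \leq p-1$. Together with the lower bound $|A| > p/k$ from Proposition \ref{prop:set_is_large}, this pins $|A|$ inside the narrow window $(p/k,\,(p-1)/(k-1)]$, in which the iterated Cauchy-Davenport bound is nearly tight at every step.

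Next, I would exploit this near-tightness to force arithmetic-progression structure on $A$. By an iterated application of Vosper's theorem (Theorem \ref{thm:vosper}) along the chain $A_0,\; 2A_0,\; \ldots,\; (k-2)A_0,\; (k-2)A_0+A$ — each of whose sizes is within an additive constant of the Cauchy-Davenport minimum, and none of which can be all of $\Z/p\Z$ since $0 \notin (k-2)A_0 + A$ — one concludes that $A_0$ lies in an arithmetic progression of length $|A|+O(1)$. Scaling $\Z/p\Z$ by the inverse of this common difference (which is a graph isomorphism of Cayley digraphs on $\Z/p\Z$ permuting the vertex set and preserving induced subgraphs, since $v$ is just relabelled), I may assume $A$ sits inside a short interval $I \subseteq \Z/p\Z$. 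Using that $v$ is small, so that $A$ and $-A$ are disjoint, after a further translation one may arrange $A \subseteq \{1, 2, \ldots, m\}$ for $m$ only slightly larger than $|A|$; equivalently, all edges of $G_v$ point ``forward'' by a small residue.

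Finally, with $A$ confined to a short interval of positive residues, $G_v$ is a highly restricted circulant digraph, and I would construct a bad subgraph explicitly. The idea is to choose the vertex set $S$ inside a subinterval $\{0,1,\ldots,N\} \subset \Z/p\Z$ with $N < p$, so that no wrap-around edges appear and $G_v[S]$ is automatically induced; then count out-degrees inside $S$ via the interval structure of $A$. By choosing $|S|$ odd and tuning its length relative to $m$ and $|A|$, one forces every vertex of $S$ to have a prescribed out-degree parity. The narrow size window $|A| \in (p/k,\,(p-1)/(k-1)]$ provided by Step 1 fixes the parity of $|A|$ (modulo a short case split) sufficiently to make all out-degrees in $S$ odd.

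The main obstacle is this last step: the additive combinatorics pins $v$ down very tightly, but actually producing the explicit bad subgraph requires careful bookkeeping involving the parity of $|A|$, the precise length of the enclosing interval, and the positions of the endpoints of $A$ inside it. I expect the argument to split into a small number of cases according to these parameters, each resolved by an explicit choice of $S$ and a direct count of edges, and this case analysis — rather than any deeper additive input — is where the real work lies.
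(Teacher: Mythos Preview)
Your overall strategy matches the paper's: show that for girth $k\geq 8$ the set $A_0$ must lie in a short arithmetic progression, then exploit this interval structure to build an explicit bad subgraph. The paper indeed finishes with a case analysis of roughly the shape you anticipate (splitting on whether $0$ is an endpoint of the progression, and handling parities via explicit cycles).

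However, your Step~2 has a genuine gap. You assert that iterated Vosper along the chain $A_0, 2A_0,\ldots,(k-2)A_0,(k-2)A_0+A$ forces $A_0$ into a progression of length $|A|+O(1)$, on the grounds that each step is ``within an additive constant of the Cauchy--Davenport minimum''. This is not what the bounds give. From $|A|>p/k$ and $(k-1)|A|\leq p-1$ you only know the \emph{total} slack along the chain is at most $p-1-(k-1)|A|$, and since $|A|$ can be as small as $\lceil p/k\rceil$, this slack can be of order $p/k$, not $O(1)$. Vosper's theorem requires \emph{exact} equality $|X+Y|=|X|+|Y|-1$; with slack of order $p/k$ spread over $k-2$ steps it tells you nothing. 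Even the one-more-element extension (Hamidoune--R{\o}dseth) is far from enough on its own.

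This is precisely why the paper brings in heavier additive machinery. To obtain the progression containment (Lemma~\ref{lem:contained_in_small_AP}), it uses a Freiman $3k-3$--type theorem in $\Z/p\Z$ due to Grynkiewicz (Theorem~\ref{thm:grynk_thm}, applied as Corollary~\ref{cor:grynk_cor}), which gives AP structure from a doubling bound of the form $|2A_0|\leq \tfrac{7}{3}|A_0|-3$ rather than from exact Vosper equality; it also needs Hamidoune--R{\o}dseth (Theorem~\ref{thm:hamidoune}) to sharpen the iterated Cauchy--Davenport bound on $|(k-2)A_0|$, and an auxiliary bound $k\leq\sqrt{p+2}$ (Lemma~\ref{lem:improved_lower_bound}) to make the size hypotheses of Grynkiewicz's theorem hold. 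Your proposal would need to replace the Vosper iteration by this kind of argument before the explicit-construction step can begin.
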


Our proof of Proposition \ref{prop:girth_is_small} will rely on a weak version of a theorem due to Grynkiewicz.
\begin{theorem}[{\cite[Theorem 19.3]{GrynkBook}}]\label{thm:grynk_thm}
Let $p$ be a prime, let $\alpha \in (0, 0.45695]$ and let $X \subseteq \Z/p\Z$ be a nonempty subset such that $|2X| = 2|X| - 1 + r < p$. Suppose that the following two bounds hold:
\begin{align}
    |2X| &\leq (2+\alpha)|X| - 3, \label{eqn:first_bound_in_grynk}\\
    |2X| &\leq \min \left\{ \frac{p+3}{2}, \frac{9 - \left(1 + 2\alpha\right)^2(2+\alpha)}{9 - \left(1 + 2\alpha\right)^2}p \right\}.
    \label{eqn:second_bound_in_grynk}
\end{align}
Then, $X$ is contained in an arithmetic progression of length at most $|X| + r$.
\end{theorem}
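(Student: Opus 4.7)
The plan is to assume, for contradiction, that the girth satisfies $k \geq 8$, force $A$ into a short arithmetic progression via Theorem \ref{thm:grynk_thm}, and then exploit this structure to produce a bad subgraph of $G_v$, contradicting Proposition \ref{prop:graph_theory_formulation}.

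The first step is to derive an upper bound on $|2A_0|$. Starting from the inequality $|(k-2)A_0 + A| \leq p-1$ of Lemma \ref{lem:trivial_facts_about_k} and applying Cauchy--Davenport $(k-3)$ times to peel off copies of $A_0$ and one more to peel off $A$, I obtain
\[
|(k-2)A_0 + A| \geq |2A_0| + (k-3)|A| - 1,
\]
so $|2A_0| \leq p - (k-3)|A|$. Combined with the lower bound $|A| > p/k$ from Proposition \ref{prop:set_is_large}, this forces the doubling $|2A_0|/|A_0|$ to be not much bigger than $3$ when $k \geq 8$. I would then verify the two hypotheses (\ref{eqn:first_bound_in_grynk}) and (\ref{eqn:second_bound_in_grynk}) of Theorem \ref{thm:grynk_thm} for $X = A_0$ with a suitably chosen $\alpha \in (0, 0.45695]$, most likely splitting into subcases according to where $|A|$ lies in the interval $(p/k,\, p/(k-1))$. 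The conclusion is that $A_0$ is contained in an arithmetic progression of length at most $|A_0| + r$, where $r = |2A_0| - 2|A_0| + 1$.

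After dilating by the common difference of this progression, I may assume $A \subseteq \{1, 2, \ldots, \ell\}$ modulo $p$ for some $\ell < p/2$ only slightly larger than $|A|$. Here the key consequence is that a sum $a_1 + \cdots + a_j$ of elements of $A$ vanishes modulo $p$ only when the corresponding integer sum is exactly a positive multiple of $p$, so every short cycle of $G_v$ corresponds to a very constrained additive configuration. I would then produce a bad subgraph in one of two ways: either by locating an integer combination of length $4$ or $6$ summing to $p$ (giving a $4$- or $6$-cycle, which contradicts $k \geq 8$); or, when $A$ is too concentrated for such a cycle to exist, by exhibiting an odd induced subgraph on a small explicit vertex set (for example a short interval $\{0, 1, \dots, m\}$, a translate of $A_0$, or a set built by the swap procedure from the proof of Lemma \ref{lem:at_least_two}), and tracking each outdegree by intersecting shifts of $A$ with that set.

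The main obstacle is this last step of exhibiting the bad subgraph. The girth assumption explicitly rules out the most natural candidates, so the argument must rely on finer parity properties of somewhat elaborate induced subgraphs inside the AP. I expect it to split into cases according to $k$ (treating $k=8$, and possibly $k \in \{10, 12\}$, separately from a uniform argument for large $k$) and according to how densely $A$ fills its containing progression; each case should supply either an unexpected short directed cycle or an odd induced subgraph in which every vertex has odd outdegree, contradicting Proposition \ref{prop:graph_theory_formulation} and thereby ruling out $k \geq 8$.
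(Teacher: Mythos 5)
You have not proved the statement under review. Theorem \ref{thm:grynk_thm} is a Freiman-type structure theorem in $\Z/p\Z$: for a set $X$ with $|2X| = 2|X| - 1 + r < p$ satisfying the two hypotheses (\ref{eqn:first_bound_in_grynk}) and (\ref{eqn:second_bound_in_grynk}), the set $X$ lies in an arithmetic progression of length at most $|X| + r$. Your proposal never engages with this claim; instead it outlines an argument for Proposition \ref{prop:girth_is_small} (equivalently, Lemma \ref{lem:contained_in_small_AP} together with the subsequent case analysis), and it does so by \emph{invoking} Theorem \ref{thm:grynk_thm} as a black box in its second paragraph. As a proof of the stated theorem this is circular: the conclusion you are meant to establish is assumed as a tool. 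None of the ingredients in your sketch --- girth, Cayley digraphs, bad subgraphs, Proposition \ref{prop:graph_theory_formulation}, the inequality (\ref{eqn:bound_from_girth}) --- bears on why a set of small doubling in $\Z/p\Z$ must be efficiently covered by a progression.

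For what it is worth, the paper itself gives no proof of this theorem either: it is quoted from Grynkiewicz's monograph, where it is established with genuinely additive-combinatorial machinery (a $3k-4$-type rectification argument transferring the problem to $\Z$, together with Vosper/Kneser-type tools), and the hypotheses (\ref{eqn:first_bound_in_grynk}) and (\ref{eqn:second_bound_in_grynk}) exist precisely to make that transfer possible. Reproducing such a proof is well beyond a girth analysis of $G_v$. If your actual target was Lemma \ref{lem:contained_in_small_AP} or Proposition \ref{prop:girth_is_small}, then your outline is broadly in the spirit of what the paper does there --- bound $|2A_0|$ via Cauchy--Davenport and (\ref{eqn:bound_from_girth}), check the hypotheses of Theorem \ref{thm:grynk_thm}, and then hunt for short cycles or bad subgraphs inside the containing progression --- but that is a different statement from the one you were asked to prove, and your write-up leaves its hardest step (producing the bad subgraph) as an acknowledged obstacle rather than an argument.
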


\begin{remark*}
Theorem \ref{thm:grynk_thm} is similar to Freiman's $3k-3$ theorem \cite[Theorem 5.11]{TaoVu}, except in $\Z/p\Z$ rather than $\Z$ (and with slightly altered conditions). In Freiman's $3k-3$ theorem, the conditions (\ref{eqn:first_bound_in_grynk}) and (\ref{eqn:second_bound_in_grynk}) are not needed; instead, the conclusion is satisfied by any set $A$ with $|2A| < 3|A| - 3$. It is conjectured that a similar weakening of the conditions should hold in $\Z/p\Z$ (see \cite[Conjecture 19.2]{GrynkBook}). 

There are various similar results in the literature which also provide weak forms of Freiman's $3k-3$ theorem in $\Z/p\Z$; for instance, Serra and Z\'{e}mor prove in \cite{SerraZemor} that the size constraint \eqref{eqn:second_bound_in_grynk} can be omitted, at the expense of a worse bound on the doubling $|2X| \leq (2 + \varepsilon)|X|$ for some small constant $\varepsilon$ (and sufficiently large $p$). Such a result could be used to obtain a version of Proposition \ref{prop:girth_is_small} with a weaker bound on the girth.
\end{remark*}

In particular, we obtain the following instance of Theorem \ref{thm:grynk_thm} by taking $\alpha = 1/3$. 
\begin{corollary}\label{cor:grynk_cor}
Let $p$ be a prime, and let $X \subseteq \Z/p\Z$ be a nonempty subset. Suppose that
\begin{align}
    |2X| &\leq \frac{7}{3}|X| - 3 \label{eqn:first_bound_in_grynk_cor}\\
    |2X| &\leq \frac{17}{42}p \label{eqn:second_bound_in_grynk_cor}.
\end{align}
Then $X$ is contained in an arithmetic progression with $|2X| - |X| + 1$ terms.
\end{corollary}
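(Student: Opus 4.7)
The plan is simply to apply Theorem \ref{thm:grynk_thm} with the choice $\alpha = \tfrac{1}{3}$, and to verify that under this choice the hypotheses and conclusion of the theorem reduce exactly to those of the corollary.

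First, note that $\tfrac{1}{3}\in(0,0.45695]$, so $\alpha=\tfrac{1}{3}$ is an admissible choice. With this $\alpha$, we have $2+\alpha=\tfrac{7}{3}$, so condition \eqref{eqn:first_bound_in_grynk} becomes precisely $|2X|\leq \tfrac{7}{3}|X|-3$, matching \eqref{eqn:first_bound_in_grynk_cor}. Next, with $1+2\alpha=\tfrac{5}{3}$ and $(1+2\alpha)^2=\tfrac{25}{9}$, a short computation gives
\[
\frac{9-(1+2\alpha)^2(2+\alpha)}{9-(1+2\alpha)^2}
\;=\;\frac{9-\tfrac{25}{9}\cdot\tfrac{7}{3}}{9-\tfrac{25}{9}}
\;=\;\frac{68/27}{56/9}
\;=\;\frac{17}{42}.
\]
Moreover $\tfrac{17}{42}p\leq \tfrac{p+3}{2}$ is equivalent to $-4p\leq 63$, which holds for every $p>0$, so the minimum in \eqref{eqn:second_bound_in_grynk} equals $\tfrac{17}{42}p$. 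Hence condition \eqref{eqn:second_bound_in_grynk} reduces to \eqref{eqn:second_bound_in_grynk_cor}.

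The side assumption $|2X|<p$ required by Theorem \ref{thm:grynk_thm} holds automatically from \eqref{eqn:second_bound_in_grynk_cor}, since $\tfrac{17}{42}<1$. By Cauchy--Davenport (Theorem \ref{thm:cauchy-davenport}), $|2X|\geq\min(2|X|-1,p)=2|X|-1$, so the integer $r:=|2X|-(2|X|-1)$ defined in the theorem is nonnegative. Theorem \ref{thm:grynk_thm} then places $X$ in an arithmetic progression of length at most
\[
|X|+r \;=\; |X|+|2X|-2|X|+1 \;=\; |2X|-|X|+1,
\]
which is exactly the bound claimed in the corollary. Since every step is a straightforward substitution or arithmetic check rather than a genuine mathematical obstacle, there is no hard case to dispatch: the corollary is literally the instance $\alpha=\tfrac{1}{3}$ of the theorem, recorded in this form because this is the shape in which it will be used in the sequel.
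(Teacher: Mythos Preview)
Your proof is correct and follows exactly the approach the paper indicates: substituting $\alpha=\tfrac{1}{3}$ into Theorem~\ref{thm:grynk_thm} and verifying the arithmetic. The paper merely states ``we obtain the following instance \ldots\ by taking $\alpha=1/3$'' without spelling out the computations, so your version is a faithful expansion of the same argument.
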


We will also need a strong upper bound on the value of $k$, depending on $p$, in the case that $A_0$ is not an arithmetic progression.
\begin{lemma}\label{lem:improved_lower_bound}
If $A_0$ is not an arithmetic progression, then $k \leq \sqrt{p+2}$.
\end{lemma}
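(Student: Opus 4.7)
The idea is to combine iterated Vosper's theorem with the girth-based upper bound on $|(k-1)A_0|$. Since $A_0$ is not an arithmetic progression, it must have $|A_0| \geq 3$ (as every set of size $\leq 2$ containing $0$ is trivially an AP), so $m = |A| \geq 2$.

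First I would record the key inequality from Lemma \ref{lem:trivial_facts_about_k}(2): because $0 \notin (k-2)A_0 + A$, and since $(k-1)A_0 = \{0\} \cup ((k-2)A_0 + A)$ (with disjoint union under the girth hypothesis), we obtain $|(k-1)A_0| = 1 + |(k-2)A_0 + A| \leq p$. The main step is then to iterate Vosper's theorem (Theorem \ref{thm:vosper}): since $A_0$ is not an AP, for any set $X \subseteq \Z/p\Z$ with $|X + A_0| \leq p - 2$ one has the strengthening $|X + A_0| \geq |X| + |A_0|$ of Cauchy--Davenport. Applying this iteratively with $X = A_0, 2A_0, \ldots, (k-2)A_0$ produces the chain $|tA_0| \geq t(m+1)$ for $2 \leq t \leq k-1$, provided the size condition is maintained at every step. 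In particular, $|(k-1)A_0| \geq (k-1)(m+1)$.

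Combining the iterated Vosper lower bound with the girth upper bound $|(k-1)A_0| \leq p$ gives $(k-1)(m+1) \leq p$. From Proposition \ref{prop:set_is_large}, $m > p/k$, and since $m$ is an integer and $p$ is prime (so $k \nmid p$), this forces $km \geq p+1$, i.e.\ $m + 1 \geq (p+k+1)/k$. Substituting and clearing denominators yields
\[(k-1)(p + k + 1) \leq kp,\]
which expands to $k^2 \leq p + 1 < p + 2$, so $k \leq \sqrt{p+2}$ as required.

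The main obstacle is verifying Vosper's size hypothesis $|tA_0| \leq p - 2$ at every iteration step. I would handle this by a short case analysis on the value of $|(k-1)A_0|$. When $|(k-1)A_0| \leq p - 2$ the iteration goes through cleanly (in fact yielding $k \leq \sqrt{p+2} - 1$). In the edge cases $|(k-1)A_0| \in \{p-1, p\}$, the hypothesis $m \geq 2$ combined with Cauchy--Davenport applied backwards shows $|(k-2)A_0| \leq |(k-1)A_0| - m \leq p - 2$, so iterated Vosper still applies up through step $k-2$; one then uses plain Cauchy--Davenport at the final step and combines with the lower bound on $m$ to recover the desired inequality.
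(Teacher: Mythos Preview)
Your overall strategy---iterated Vosper plus the girth bound $0\notin (k-2)A_0+A$---is exactly the paper's, and your main-case computation is fine. The gap is in the edge cases. When $|(k-1)A_0|\in\{p-1,p\}$ you can only iterate Vosper up to $(k-2)A_0$, and then one application of Cauchy--Davenport gives $(k-1)(m+1)-1\le p$ (equivalently $(k-2)(m+1)\le p-m$). Plugging in the \emph{real} lower bound $m+1\ge(p+k+1)/k$ at this point yields only
\[
k^2-k-1\le p,\qquad\text{i.e.}\quad k^2\le p+k+1,
\]
which does \emph{not} imply $k\le\sqrt{p+2}$. For a concrete counterexample to the implication, take $p=89$ and $k=10$: then $k^2=100=p+k+1$, but $\sqrt{p+2}=\sqrt{91}<10$. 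So ``recover the desired inequality'' fails as stated.

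The paper closes this gap by switching from the rational bound $m\ge(p+1)/k$ to the integer bound $m\ge r+1$, where $p=rk+s$ with $0<s<k$. Substituting $m=r+1$ into $(k-2)(m+1)\le p-m$ gives $2k-3\le r+s$, and then a short case split on $r$ versus $k-1$ finishes: if $r\ge k-1$ one reads off $p\ge k^2-2$ directly, while the residual case forces $p=k^2-k-1$ and $|A|=k-1$, which is odd and hence impossible (an odd-regular $G_v$ is itself a bad subgraph). Your argument recovers the same inequality $2k-3\le r+s$ if you use the integer bound, so the fix is precisely to replace the rational substitution by the $p=rk+s$ analysis and invoke the parity of $|A|$.
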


\begin{proof}
Suppose that $A_0$ is not an arithmetic progression. Note that this implies $|A|\geq 2$. 

Equation (\ref{eqn:bound_from_girth}) tells us that $|(k-2)A_0 + A| \leq p-1$. The Cauchy-Davenport inequality gives $|(k-2)A_0| \leq p-|A|$, and thus a repeated application of Vosper's theorem gives us that $(k-2) |A_0|\leq |(k-2) A_0| \leq p-|A|$.

Write $p = rk + s$, where $0 < s < k$ and $r \geq 0$. We have that $|A| \geq r+1$ by Proposition \ref{prop:set_is_large}, and so the above inequality $(k-2) |A_0| \leq p-|A|$ gives $(k-2)(r+2) \leq (k-2)|A_0| \leq p - |A| \leq (rk+s) - (r+1)$. This rearranges to give
\begin{equation}\label{eqn:improved_lower_bound}
    2k - 3 \leq r + s.
\end{equation}

If $r < k-1$, then this equation gives $r = k-2$, $s = k-1$, and hence $p = k^2 - k -1$. The only value of $|A|$ satisfying Proposition \ref{prop:set_is_large} is $|A| = k-1$, which is odd because $k$ is even. However, $|A|$ must be even, else the whole of $G_v$ is a bad subgraph, so we must have $r \geq k-1$. If $r \geq k$, we have $p \geq k^2$. Else we have $r = k-1$ and equation \ref{eqn:improved_lower_bound} gives $s \geq k - 2$, and so $p \geq k^2 - 2$.
\end{proof}

Finally, we will need to use the following theorem due to Hamidoune and R{\o}dseth \cite{hamidoune}, which is an extension of Vosper's Theorem. Say that an \emph{almost progression} is an arithmetic progression, possibly missing one term. 
\begin{theorem}[{\cite[Theorem 3]{hamidoune}}]\label{thm:hamidoune}
Suppose that $A, B \subseteq \Z/p\Z$ are sets with $|A|, |B| \geq 3$. Suppose further that 
\[
7 \leq |A+B| \leq |A| + |B| \leq p-4.
\]
Then $A$ and $B$ are both almost progressions with the same common difference.
\end{theorem}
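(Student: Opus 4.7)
The plan is to reduce to Vosper's theorem (Theorem~\ref{thm:vosper}) by augmenting $A$ by a single element. By the Cauchy--Davenport inequality, $|A+B|\ge |A|+|B|-1$, so the hypothesis forces $|A+B|\in\{|A|+|B|-1,\,|A|+|B|\}$. If $|A+B|=|A|+|B|-1$, Vosper's theorem applies directly (the sizes are at least $3$ and $|A+B|\le p-4\le p-2$), showing that $A$ and $B$ are arithmetic progressions with the same common difference, which are in particular almost progressions.

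Assume now that $|A+B|=|A|+|B|$. The key step is to find some $a_0\in (\Z/p\Z)\setminus A$ with $a_0+B\subseteq A+B$. Given such an $a_0$, the set $A':=A\cup\{a_0\}$ satisfies $|A'+B|=|A+B|=|A'|+|B|-1$, and since $|A'|\ge 4$, $|B|\ge 3$, and $|A'+B|\le p-4\le p-2$, Vosper's theorem yields that $A'$ and $B$ are arithmetic progressions with a common difference $d$. Then $B$ is itself an arithmetic progression of difference $d$, and $A=A'\setminus\{a_0\}$ is an almost progression of the same difference, as required.

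To produce such an extension I would pass to the complement $C:=(\Z/p\Z)\setminus (A+B)$, which has size $|C|=p-|A|-|B|\ge 4$ by hypothesis. Since $A+B$ and $C$ are disjoint, $A\cap(C-B)=\emptyset$, giving $|C-B|\le p-|A|$, while Cauchy--Davenport gives $|C-B|\ge|C|+|B|-1=p-|A|-1$. So $|C-B|$ equals either $|C|+|B|-1$ or $|C|+|B|$. In the first case Vosper's theorem applies to $C$ and $-B$ (sizes at least $3$; $|C-B|\le p-3\le p-2$), producing both as arithmetic progressions with a common difference $d$; then $C-B$ is an arithmetic progression of difference $d$, hence so is its complement in $\Z/p\Z$ (the complement of any arithmetic progression in $\Z/p\Z$ is itself an arithmetic progression of the same common difference). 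Since $A$ sits inside this complement and has size exactly one less than it, $A$ is an almost progression of difference $d$; combined with $B$ being an arithmetic progression of difference $d$, this completes this branch.

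The main obstacle is the remaining case $|C-B|=|C|+|B|$, where the dual sumset also sits in the Hamidoune--R{\o}dseth regime rather than Vosper's. Here I would invoke Hamidoune's isoperimetric method: study the minimal subsets of $\Z/p\Z$ realising extremal boundary in the Cayley digraph generated by $B$ (the so-called $1$- and $2$-atoms), and show that the simultaneous failure of every single-point extension of $A$ forces an atom to coincide with a translate of $A$ or $B$ in a way that directly exhibits the desired almost-progression structure with a common difference. Handling this atomic case cleanly, and ensuring the common differences for $A$ and $B$ match, is the technical heart of the argument.
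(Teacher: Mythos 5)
This statement is not proved in the paper at all: it is imported as a black box from Hamidoune and R{\o}dseth (their Theorem 3), so there is no internal proof to compare against. Judged on its own terms, your proposal is a partial reduction to Vosper's theorem, and two of its three branches are sound: the case $|A+B|=|A|+|B|-1$ is immediate from Vosper, and in the case $|A+B|=|A|+|B|$ your duality trick with $C=(\Z/p\Z)\setminus(A+B)$ correctly disposes of the sub-case $|C-B|=|C|+|B|-1$ (Vosper applies to $C$ and $-B$, the complement of an arithmetic progression in $\Z/p\Z$ is again one with the same difference, and $A$ sits inside that complement with one element to spare).

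The remaining sub-case $|C-B|=|C|+|B|$ is a genuine gap, and it is not a minor one. Note that for any sets, $(\Z/p\Z)\setminus\bigl(((\Z/p\Z)\setminus(A+B))-B\bigr)=\{z: z+B\subseteq A+B\}$, so the identity $|C-B|=p-|A|$ forces $\{z: z+B\subseteq A+B\}=A$ exactly. In other words, the stuck case is precisely the case in which \emph{no} element $a_0\notin A$ with $a_0+B\subseteq A+B$ exists, so your ``key step'' of augmenting $A$ by one point provably cannot be carried out there, and the same obstruction survives the symmetric variants (one can run the argument with $C-A$, or iterate complements, but one can be stuck in all of them simultaneously). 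The closing appeal to ``Hamidoune's isoperimetric method'' and the structure of $1$- and $2$-atoms is a pointer to the actual published proof rather than an argument: identifying the atoms, showing they are progressions, and matching the common differences of $A$ and $B$ is the entire technical content of the Hamidoune--R{\o}dseth paper, none of which is reproduced here. As it stands the proposal proves a weaker statement (the conclusion under the additional hypothesis that $|A+B|=|A|+|B|-1$, or that $|C-B|\le |C|+|B|-1$), not the theorem.
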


Corollary \ref{cor:grynk_cor} and Theorem \ref{thm:hamidoune} are used to prove the following result; the proof is rather technical and much of the additional work is due to fighting over the $-1$'s from Cauchy-Davenport.
\begin{lemma}
\label{lem:contained_in_small_AP}
If a small $v\neq 0$ works together with $\e$, and the girth of $v$ is at least 8, then $A_0$ is contained in an arithmetic progression of length at most $\frac{p}{k-2}+1$.
\end{lemma}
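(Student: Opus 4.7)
The strategy is to use the constraint $|(k-2)A_0+A|\leq p-1$ from Lemma \ref{lem:trivial_facts_about_k} to bound the doubling $|2A_0|$, and then invoke a Freiman-type structural result (Corollary \ref{cor:grynk_cor} or Theorem \ref{thm:hamidoune}) to conclude that $A_0$ sits inside a short arithmetic progression.

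Step 1: bound $|2A_0|$. Writing $(k-2)A_0+A=2A_0+(k-4)A_0+A$ and applying Cauchy--Davenport (Theorem \ref{thm:cauchy-davenport}) twice, together with the iterated estimate $|jA_0|\geq j|A|+1$, I obtain
\[
|2A_0|\leq p-(k-3)|A|.
\]
Combined with $|A|>p/k$ from Proposition \ref{prop:set_is_large} and the assumption $k\geq 8$, this already gives $|2A_0|<3p/k\leq 3p/8<\tfrac{17}{42}p$, so the size condition \eqref{eqn:second_bound_in_grynk_cor} required by Corollary \ref{cor:grynk_cor} is automatic.

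Step 2: case split on the structure of $A_0$. If $A_0$ itself is an arithmetic progression, then $(k-2)A_0+A$ is an arithmetic progression of length $(k-1)|A|+1$, so the hypothesis forces $|A|\leq \tfrac{p-2}{k-1}$, and the desired bound $|A|+1\leq \tfrac{p}{k-2}+1$ follows by routine arithmetic. Otherwise, Lemma \ref{lem:improved_lower_bound} (and its proof via iterated Vosper's theorem) yields both $|A|\leq (p-k+2)/(k-1)$ and $p\geq k^2-2$. If $|2A_0|$ attains its minimal possible value $2|A_0|=2|A|+2$, then Theorem \ref{thm:hamidoune} applies and $A_0$ is an almost arithmetic progression, so it lies in an AP of length $|A|+2\leq \tfrac{p}{k-2}+1$, and we are done. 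In the remaining regime $|2A_0|\geq 2|A_0|+1$, I would apply Corollary \ref{cor:grynk_cor} to place $A_0$ inside an AP of length at most $|2A_0|-|A_0|+1\leq p-(k-2)|A|$, so the target reduces to proving $p-(k-2)|A|\leq \tfrac{p}{k-2}+1$, i.e., $|A|\geq \tfrac{p(k-3)-(k-2)}{(k-2)^2}$.

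The main obstacle is that the trivial lower bound $|A|>p/k$ is just slightly too weak to yield $|A|\geq \tfrac{p(k-3)-(k-2)}{(k-2)^2}$ directly, and an analogous difficulty arises in verifying the doubling hypothesis \eqref{eqn:first_bound_in_grynk_cor} of Corollary \ref{cor:grynk_cor}. I expect to close this gap by sharpening Step 1 through iterated Vosper's theorem, which is legitimate precisely because $A_0$ is not an arithmetic progression: this upgrades $|jA_0|\geq j|A|+1$ to $|jA_0|\geq j|A_0|=j|A|+j$ at each intermediate summation in the decomposition of $(k-2)A_0+A$, producing an extra $\Theta(k)$ slack in the bound on $|2A_0|$ which should be sufficient to carry through the two delicate inequalities.
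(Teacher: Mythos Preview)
Your overall strategy is sound, but the closing paragraph contains a genuine gap: the $\Theta(k)$ slack you hope to gain from iterated Vosper is not enough. Concretely, after the Vosper upgrade your bound becomes $|2A_0|\le p-(k-3)|A|-c$ with $c=O(k)$, and plugging in $|A|>p/k$ gives at best $|2A_0|<3p/k-c$. To verify \eqref{eqn:first_bound_in_grynk_cor} you need $|2A_0|\le\tfrac73|A_0|-3\approx \tfrac{7p}{3k}$, and since $3>\tfrac73$ this forces $c\ge \tfrac{2p}{3k}+O(1)$. Nothing prevents $p$ from being much larger than $k^2$ (Lemma \ref{lem:improved_lower_bound} is only a lower bound on $p$), so an additive $O(k)$ correction cannot close a $\Theta(p/k)$ gap. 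The same shortfall kills your final inequality $p-(k-2)|A|\le \tfrac{p}{k-2}+1$.

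The paper's remedy is to change the object you iterate on. Rather than squeezing more out of $|jA_0|$ via Vosper, it case-splits on whether $2A_0$ itself is an almost progression. If it is, a direct argument places $A_0$ in a short progression. If $2A_0$ is \emph{not} an almost progression, one can apply the contrapositive of Theorem~\ref{thm:hamidoune} repeatedly to the doubling chain $2A_0,4A_0,\dots,(k-2)A_0$, obtaining
\[
|(k-2)A_0|\ \ge\ \tfrac{k-2}{2}\,|2A_0|+\tfrac{k-4}{2}.
\]
This is a \emph{multiplicative} lower bound in $|2A_0|$, and combined with $|(k-2)A_0|\le p-|A|<p(k-1)/k$ it yields $|2A_0|\le \tfrac{2p(k-1)}{k(k-2)}-\tfrac{k-4}{k-2}\approx \tfrac{2p}{k}$, which is now comfortably below $\tfrac{7p}{3k}$ and makes both \eqref{eqn:first_bound_in_grynk_cor} and the length bound $|2A_0|-|A_0|+1\le \tfrac{p}{k-2}$ go through. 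Your case $|2A_0|=2|A_0|$ is a special instance of the ``$2A_0$ almost progression'' branch, but you also need to handle $|2A_0|=2|A_0|+1,\,2|A_0|+2,\dots$ up to the Grynkiewicz threshold, and that is exactly where the Hamidoune iteration on $2A_0$ does the work.
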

\begin{proof}
Our proof will fall into three cases, depending on the structure of $A_0$ and $2A_0$.

\textbf{Case 1:} \emph{$A_0$ is already an arithmetic progression.} In this case, we may use the upper bound of Proposition \ref{prop:set_is_large} to see that \[|A_0| < \frac{p}{k-1} + 1 < \frac{p}{k-2} + 1.\] Thus, we may take the progression $P$ to be $A_0$, and it will already be of the required length.

\textbf{Case 2:} \emph{$A_0$ is not an arithmetic progression, but $2A_0$ is an almost progression.}
First, we will use (\ref{eqn:bound_from_girth}) to obtain an upper bound on $|2A_0|$. Indeed, one application of Cauchy-Davenport tells us that $|(k-2)A_0| \leq p - |A|$, and a repeated application of Cauchy-Davenport gives that
\[
\frac{k-2}{2}(|2A_0| - 1) \leq |(k-2)A_0|.
\] 
Since $|A| > p/k$ by Proposition \ref{prop:set_is_large}, this immediately tells us that
\begin{equation}
    |2A_0| < \frac{2p(k-1)}{k(k-2)} + 1.
\end{equation}
Now, suppose without loss of generality that $2A_0$ is an almost progression with common difference 1, which we will treat as a subset of $\Z$ in the natural way. Say that the endpoints of $2A_0$ are $-r$ and $s$, for $r, s \geq 0$. In particular, $-r$ and $s$ are both elements of $2A_0$.

We immediately obtain that 
\begin{equation}\label{eqn:rs_upper_bound}
r + s = |2A_0| \leq \frac{2p(k-1)}{k(k-2)} + 1,
\end{equation}
since there is at most one element of $[-r, s]$ missing from $2A_0$.

Now, we claim that $A_0 \subseteq [-\floor{r/2}, \floor{s/2}]$. Indeed, $0 \in A_0$, so for any $x \in A_0$, $x + 0 \in 2A_0 \subseteq [-r, s]$. Therefore, $A_0 \subseteq [-r, s]$.

However, if $x \in A_0$, then $2x \in [-r, s]$. If $x \in [-r, -\floor{r/2}-1]$, then $2x \in [-2r, -r-1]$. Because $p - 2r > 2s \geq s$, which follows from (\ref{eqn:rs_upper_bound}) and the fact that $k \geq 8$, it follows that $A_0$ does not meet $[-r, -\floor{r/2} - 1]$. A similar argument rules out $[\floor{s/2}+1, s]$, and so it must be the case that $A_0 \subseteq [-\floor{r/2}, \floor{s/2}]$ as claimed.

Thus, $A_0$ is contained in a progression of length at most $\floor{r/2} + \floor{s/2} + 1$, which is at most 
\[
\frac{r+s}{2} + 1 \leq \frac{p(k-1)}{k(k-2)} + \frac{3}{2}.
\]
This will be no greater than $p/(k-2) + 1$ provided that $p \geq k(k-2)/2$, which follows from Lemma \ref{lem:improved_lower_bound}.

\textbf{Case 3:} \emph{$2A_0$ is not an almost progression.} Our plan is to prove that $A_0$ obeys the conditions (\ref{eqn:first_bound_in_grynk_cor}) and (\ref{eqn:second_bound_in_grynk_cor}). Again, we may use (\ref{eqn:bound_from_girth}) and (\ref{eqn:bounds_on_set_size}) to show that $|(k-2)A_0| \leq p - |A| < p(k-1)/k$.

Observe that $|2A_0| \geq 3$ and $|4A_0| \geq 7$, both of which follow from the assertion that $|A_0| \geq 3$ and the Cauchy-Davenport theorem. Furthermore, $|(k-2)A_0| \leq p(k-1)/k$, which is less than $p-4$ by Lemma \ref{lem:improved_lower_bound} and the fact that $k \geq 8$. Thus, we may apply the contrapositive of Theorem \ref{thm:hamidoune} $(k-4)/2$ times to show that 
\[
|(k-2)A_0| \geq \frac{k-2}{2}|2A_0| + \frac{k-4}{2},
\]
which, combined with our upper bound $|(k-2)A_0| \leq p - |A| < p(k-1)/k$, gives
\begin{equation}\label{eqn:sumset_upper_bound}
|2A_0| \leq \frac{2p(k-1)}{k(k-2)}- \frac{k-4}{k-2}.
\end{equation}

Now, we will first prove that $A_0$ satisfies (\ref{eqn:second_bound_in_grynk_cor}). Indeed, this is the case provided that
\[
\frac{2p(k-1)}{k(k-2)}- \frac{k-4}{k-2} \leq \frac{17}{42}p.
\]
Since $k \geq 8$, the left hand side is at most $7p/24$, which is less than $17p/42$.

It remains to prove that $A_0$ satisfies (\ref{eqn:first_bound_in_grynk_cor}). Suppose that this is not the case; in other words,
\begin{equation}
|2A_0| > \frac{7}{3}|A_0| - 3.
\end{equation}
In view of (\ref{eqn:sumset_upper_bound}) and Proposition \ref{prop:set_is_large}, we deduce that
\[
\frac{2p(k-1)}{k(k-2)} - \frac{k-4}{k-2} > \frac{7}{3} \left( \frac{p}{k} + 1 \right) - 3.
\]
Rearranging, this becomes
\[
p \left( \frac{k-8}{3k(k-2)} \right) < \frac{2}{3} - \frac{k-4}{k-2}.
\]
However when $k = 8$ both sides of this are 0, and when $k > 8$ the left hand side is positive and the right hand side is negative, which gives a contradiction.

Thus, $A_0$ satisfies (\ref{eqn:first_bound_in_grynk_cor}) and (\ref{eqn:second_bound_in_grynk_cor}), and so we may apply Corollary \ref{cor:grynk_cor}. This tells us that $A_0$ is contained in an arithmetic progression with $|2A_0| - |A_0| + 1$ terms, and this is at most $p/(k-2)$ by Proposition \ref{prop:set_is_large} and the bound (\ref{eqn:sumset_upper_bound}).
\end{proof}
We are now ready to prove Proposition \ref{prop:girth_is_small}: if a small $v \neq 0$ works together with $\e$, then the girth of $v$ is 4 or 6.
\begin{proof}[Proof of Proposition \ref{prop:girth_is_small}]
Suppose that $v$ is a vector with girth $k$ that works with $\e$. Suppose further that $k$ is not 4 or 6; this implies $k \geq 8$ since $k$ is even.

By Lemma \ref{lem:contained_in_small_AP}, $A_0$ is contained in an arithmetic progression of length at most $\frac{p}{k-2} + 1$.
Without loss of generality, we may assume $A_0$ is contained in an arithmetic progression $P$ of common difference 1, and that both endpoints of $P$ are elements of $A_0$. We will derive a contradiction by finding a bad subgraph. We will split the proof into two cases depending on whether or not $0$ is an endpoint of $P$.

\textbf{Case A:} \emph{$0$ is not an endpoint of $P$.} We will treat the elements of $P$ as elements of $\Z$ in the natural way. 

Let $P = [-R, S]$, where $R \neq S$ follows from the fact that $A$ and $-A$ are disjoint. Without loss of generality, assume that that $R < S$. The bound $|P| \leq \frac{p}{k-2} + 1$ tells us that $R + S \leq \frac{p}{k-2}$. Let $-r$ be the largest negative element of $A_0$ and $s$ be the smallest positive element of $A_0$. Consider $H = G_v[[-r, s]]$, the subgraph of $G_v$ induced on the interval $[-r, s]$. 

Each nonnegative vertex of $H$ has an outgoing edge corresponding to $-r$, and each nonpositive vertex of $H$ has an outgoing edge corresponding to $+s$. Consequently, each vertex of $H$ has positive outdegree. In particular, $H$ contains a cycle $C$, which we may assume to be induced by taking the shortest such cycle. In other words, $G_v$ contains an induced cycle $C$ contained in $[-r, s]$. 

If $C$ is an odd cycle, then it is the required bad subgraph. Otherwise, $C$ must be even. Translate so that $-r$ is the smallest element of $C$. We claim that $-r - S$ has exactly one outgoing edge pointing towards $C$, and no incoming edges from $C$. Indeed, there is an edge directed from $-r - S$ to $-r$ because $S$ is assumed to be in $A$.

Now, the outgoing edges from $-r - S$ go to $-r - S + A$, which is contained in $-r - S + [-R, S]$, and the incoming edges to $-r - S$ come from $-r - S - A$, which is contained in $-r - S + [-S, R]$. Because $S > R$, we have that \[(-r - S + A) \cup (-r - S - A) \subseteq -r - S + [-S, S],\] and so there will be no other edges between $-r - S$ and $C$ provided that $(-r - S + [-S, S]) \cap [-r, s] = \{-r\}$.

To see that this is the case, observe that $-r - S + [-S, S] = [-r - 2S, -r]$. Working in $\Z/p\Z$, the intervals $[-r - 2S, -r - 1]$ and $[-r, s]$ will be disjoint provided that $p - r - 2S > s$. This holds because $r + s$ and $S$ are both less that $\frac{p}{k-2} < \frac{p}{3}$.

Thus, there is exactly one outgoing edge from $-r - S$ to $C$, and no incoming edges from $C$ to $-r - S$. This means that each vertex in $C \dju \{-r - S\}$ has outdegree 1, and so this is our required bad subgraph.

\textbf{Case B:} \emph{$0$ is an endpoint of $P$.} We will treat elements of $P$ as elements of $[0, p]$ in the natural way. Let $m = \max{P}$, so $m < \frac{p}{k-2}$. 

We claim that any $r$ with the property that $m/2 \leq r \leq 3m/2$ is the sum of two elements of $A_0$. Indeed, $|A_0| > p/k + 1$, so there are fewer than $m - p/k - 1$ elements of $P \setminus A_0$. However, given any $r$ with $m/2 \leq r \leq 3m/2$, there are at least $m/4$ disjoint pairs of elements of $P$ which sum to $r$. Thus, if $r$ is not expressible as the sum of two elements of $A$, then it must be the case that
\[
\frac{m}{4} < m - \frac{p}{k} - 1.
\]
Since $m < p/(k-2)$ and $k \geq 8$, this cannot happen.

Now, choose $r$ with $m/2 \leq r \leq 3m/2$ so that $p - r$ is divisible by $m$ (this is possible since there are at least $m$ integers in the interval $[m/2,3m/2]$), and let $r_1$ and $r_2$ be two elements of $A_0$ which sum to $r$. This gives two cases depending on whether $r_1$ and $r_2$ are both nonzero or at least one is zero; we prove only the first case, as the second is easier.

We write $p = sm + r_1 + r_2$; since $v$ has girth at least 8, we must have $s \geq 6$. If $s$ is odd, then we have an odd induced cycle of length $s+2$ whose elements are the partial sums of
\[
\underbrace{m + \dots + m}_{s-1 \text{ terms}} + r_1 + m + r_2 = p.
\]
Note that the order of the elements is important, and that $m$ is chosen (and placed in between $r_1$ and $r_2$) such that the odd cycle 
\[
\{0,m,2m,\dots,(s-1)m,(s-1)m+r_1,sm+r_1\}
\]
is induced.

Otherwise, $s$ is even. By a similar argument to the one used to show that $r$ is a sum of two elements of $A_0$, we can deduce that $m$ is the sum of two elements of $A$; we write $m = m_1 + m_2$. This gives us an odd induced cycle of length $s+3$, whose elements are the partial sums of
\[
m + m_1 + m + m_2 + \underbrace{m + \dots + m}_{s-4 \text{ terms}} + r_1 + m + r_2 = p.\qedhere
\]
\end{proof}

\section{Proof of main result}
\label{sec:proof}
Suppose towards contradiction that there exists a prime $p$ with primitive root $2$ for which $h_2(p)>2$. Then there are three vectors $u,v,w\in \F_2^p$ that are linearly independent and work together. By Lemma \ref{lem:irreducible}, we may assume that $u=\e$. By Lemma \ref{lem:nothing_works_with_sym} and Proposition \ref{prop:girth_is_small}, it must be the case that $v$ and $w$ are non-symmetric vectors of girth $4$ or $6$. Recall that we call a vector $u$ small if there is no $i$ for which $u_i=u_{-i}=1$. By Lemma \ref{lem:no_twoway_edges}, we may assume $v$ and $w$ are small (possibly replacing $v$ or $w$ with $\e + v$ or $\e + w$).
We can moreover control the number of edges in the graphs $G_v$ and $G_w$ by Proposition \ref{prop:set_is_large}.

Since $v, w$ and $\e$ all work together, $v + w$ must also work with $\e$. Now, $v + w$ cannot be symmetric, else the fact that $\e, v$ and $v+w$ work together would contradict Lemma \ref{lem:nothing_works_with_sym}. 
To complete the proof of Theorem \ref{thm:main_theorem}, we will divide into cases depending on how many of $v, w$ and $v+w$ have girth 4 and how many have girth 6. For this, we use a second graph-theoretic formulation of the problem given in Proposition \ref{prop:graph_theory_formulation_two_vector}. We first need to establish that $v+w$ is also small.

Since Conjecture \ref{conj:our_in_intro} has been verified by computer for all $3 \leq n \leq 43$, we can assume that either $p > 43$ or $p = 7$. A simple brute force search confirms that $h_2(7) = 2$, and so we can assume throughout this section that $p > 43$ (although a much weaker bound would suffice).

\begin{lemma}\label{lem:no_ssl}
Suppose that $v, w \in \F_2^p$ are small vectors such that $v, w$ and $\e$ work together. Then, the vector $v+w$ must also be small.
\end{lemma}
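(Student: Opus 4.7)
The plan is to argue by contradiction. Assume $v + w$ is not small: there exists $i \neq 0$ with $(v + w)_i = (v + w)_{-i} = 1$. Since $v$ and $w$ are individually small, we cannot have $v_i = v_{-i} = 1$ or $w_i = w_{-i} = 1$, so after possibly swapping $v$ and $w$ we may assume $v_i = w_{-i} = 1$ and $v_{-i} = w_i = 0$. Writing $A_v = \{z : v_z = 1\}$ and $A_w = \{z : w_z = 1\}$, I partition $\Z/p\Z$ according to which of $v_z, v_{-z}, w_z, w_{-z}$ equal $1$. The relevant strata are $A^{+} = \{z \in A_v \setminus A_w : -z \in A_w \setminus A_v\}$, $A^{-} = -A^{+}$, $P_v = \{z \in A_v \setminus A_w : -z \notin A_v \cup A_w\}$, $P_w = \{z \in A_w \setminus A_v : -z \notin A_v \cup A_w\}$, and $C = A_v \cap A_w$. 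Smallness of $v$ and $w$ gives $A_v = A^{+} \sqcup P_v \sqcup C$ and $A_w = A^{-} \sqcup P_w \sqcup C$, and by construction $i \in A^{+}$.

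The key tool is the natural extension of Proposition \ref{prop:graph_theory_formulation} to the triple $\e, v, w$: the triple works together if and only if there is no odd-sized $S \subseteq \Z/p\Z$ in which every vertex has odd outdegree in $G_v[S]$ or in $G_w[S]$. I apply this to five three-vertex subgraphs $S = \{0, x, y\}$, in each case computing the outdegree of all three vertices in both $G_v[S]$ and $G_w[S]$. A routine case analysis yields:
\begin{enumerate}[label=(\roman*)]
    \item $\{0, i, -i\}$ is bad unless $2i \in A^{+} \cup P_v \cup (-P_w)$;
    \item $\{0, i, -i'\}$ with $i' \in A^{+} \setminus \{i\}$ is bad unless $i + i' \in A^{+}$;
    \item $\{0, a, -i\}$ with $a \in P_v$ is bad unless $a + i \in P_v$;
    \item $\{0, b, i\}$ with $b \in P_w$ is bad unless $b - i \in P_w$;
    \item $\{0, -c, i\}$ with $c \in C$ is bad unless $c + i \in A^{+} \cup C \cup (-P_w)$.
\end{enumerate}

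Setting $Z = A^{+} \cup P_v \cup (-P_w) \cup C$, the five conclusions together give $A^{+} + Z \subseteq Z$: (i) and (ii) cover $A^{+} + A^{+}$, (iii) gives $A^{+} + P_v \subseteq P_v$, (iv) rewritten as $A^{+} + (-P_w) \subseteq -P_w$ covers $A^{+} + (-P_w)$, and (v) covers $A^{+} + C$. Each stratum excludes $0$ (since $v_0 = w_0 = 0$), so $0 \notin Z$. But $A^{+}$ is nonempty: picking $i \in A^{+} \subseteq Z$ and iterating $x \mapsto x + i$ yields $\{i, 2i, \ldots, (p-1)i\} \subseteq Z$, and then one further step gives $pi = 0 \in Z$, contradicting $0 \notin Z$. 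The main obstacle is the bookkeeping in the five outdegree computations; for each $S$ one must check that the three elements are distinct (using smallness and stratum disjointness) and enumerate which of the six directed edges are present before reading off the two-graph parity condition for $S$ to be bad.
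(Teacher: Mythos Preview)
Your overall strategy is sound and, once corrected, gives a proof that is considerably more elementary than the paper's. The paper bounds $|A^{(v)}|,|A^{(w)}|,|A^{(\e+v+w)}|$ via Proposition~\ref{prop:set_is_large}, forces $p=3m+1$ with all three sizes equal to $m$, applies Vosper's theorem to pin down $A_0^{(v)}$ as an arithmetic progression, and then exhibits an induced $5$-cycle. Your argument avoids all of the additive combinatorics and works directly with three-vertex test sets, needing no lower bound on $p$. However, as written there is a genuine gap in steps (iii) and (iv).

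The problem is that you only invoke the \emph{outdegree} version of the two-graph criterion. Take (iii): in $S=\{0,a,-i\}$ with $a\in P_v$, vertex $0$ has outdegree $1$ in each of $G_v,G_w$, vertex $a$ has outdegree $v_{-(a+i)}$ in $G_v$ and $w_{-(a+i)}$ in $G_w$, and vertex $-i$ has outdegree $1+v_{a+i}$ in $G_v$ and $w_{a+i}$ in $G_w$. So $S$ fails to be bad precisely when either $-(a+i)\notin A_v\cup A_w$ (vertex $a$ is saved) or $a+i\in A_v\setminus A_w$ (vertex $-i$ is saved). The first alternative allows, for instance, $a+i\in P_w$ or $a+i\in N:=\{z\neq 0: v_{\pm z}=w_{\pm z}=0\}$, neither of which lies in your $Z$. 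So claim (iii) is false as stated, and even its weakening to $a+i\in Z$ fails. The same issue arises in (iv). Incidentally, (ii) as stated is also too strong --- the correct conclusion is $i+i'\in A^{+}\cup P_v\cup(-P_w)$ --- though this weaker version still lies in $Z$.

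The fix is to use the \emph{indegree} criterion as well (equivalently, apply the outdegree test to $-S$, as in Lemma~\ref{lem:from_in_to_out}). For (iii), the set $-S=\{0,-a,i\}$ is not bad (in outdegree) exactly when $a+i\in(A_v\setminus A_w)\cup\bigl(-(A_w\setminus A_v)\bigr)=A^{+}\cup P_v\cup(-P_w)\subseteq Z$, which is what you need. The analogous computation for (iv) gives $i-b\in A^{+}\cup P_v\cup(-P_w)\subseteq Z$. With these corrections, $i+Z'\subseteq Z'$ holds already for $Z'=A^{+}\cup P_v\cup(-P_w)$ (so case (v) and the stratum $C$ are not even needed), and your iteration argument goes through.
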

As before, set $A^{(v)} = \{i:v_i = 1\}$ and $A^{(v)}_0 = A^{(v)}\dju\{0\}$, and define $A^{(w)}$ and $A^{(w)}_0$ similarly. Since $v$ and $w$ are small, $A^{(v)}$ is disjoint from $-A^{(v)}$, and the same holds for $A^{(w)}$.
\begin{proof}[Proof of Lemma \ref{lem:no_ssl}]
Suppose otherwise that $v + w$ is large. Since $v$ and $w$ are small, they must have girth at least 4, and so, by Proposition \ref{prop:set_is_large}, $A^{(v)}$ and $A^{(w)}$ have size at most $\floor{p /3}$. Since $A^{(v + w)} = A^{(v)} \triangle A^{(w)}$, $A^{(v + w)}$ must have size at most $2\floor{p/3}$, which means that $|A^{(\e + v + w)}| \geq p - 1 - 2 \floor{p/3}$. 

However, by assumption $\e + v + w$ is a small vector that works with $\e$, and so has girth at least 4. In particular, $|A^{(\e + v + w)}| \leq \floor{p/3}$.

Combining these two equations, we find that $(p - 1)/3 \leq \floor{p/3}$, which implies that, for some $m \geq 2$, $p = 3m+1$ and $|A^{(v)}| = |A^{(w)}| = |A^{(\e + v + w)}| = m$.

To rule out this special case, we will use Vosper's Theorem to find the structure of $v$ and show that $G_v$ has an induced $5$-cycle, which contradicts the assertion that $v$ works with $\e$. By two applications of Cauchy-Davenport, $|2A^{(v)}_0 + A^{(v)}| \geq 3m$. However, $|2A^{(v)}_0 + A^{(v)}| \leq 3m$ by (\ref{eqn:bound_from_girth}). Therefore, equality holds and, in particular, \[|A^{(v)}_0 + A^{(v)}| = |A^{(v)}_0| + |A^{(v)}| - 1 = 2m \leq p - 2.\] Hence, we may apply Vosper's Theorem to see that $A_0^{(v)}$ and $A^{(v)}$ are arithmetic progressions with the same common difference; without loss of generality, the common difference is 1, and $A_0^{(v)} = \{0, \dots, m\}$.

Now, we have an induced 5-cycle whose vertices are $\{0, m-1, m+1, 2m, 2m+2\}$. Indeed, $m-1$ and $2$ are both elements of $[m]$, and there will be no other edges provided that $2m - 2 > m$, which holds for $p > 7$.
\end{proof}

As with Proposition \ref{prop:graph_theory_formulation}, we can rephrase our problem in terms of a graph theoretical problem. Since $v + w$ is small, there cannot be any edges that have different directions in $G_v$ and $G_w$. Thus, we may encode $G_v$ and $G_w$ in a single graph $G$ with vertex set $\Z/p\Z$ using edges coloured with three colours. We do this as follows.
\begin{itemize}
    \item If there is an edge from $i$ to $i +j$ in both $G_v$ and $G_w$ (so $j \in A^{(v)} \cap A^{(w)}$), then draw a green edge from $i$ to $i + j$.
    \item If there is an edge from $i$ to $i + j$ in $G_v$ but not in $G_w$ (so $j \in A^{(v)} \setminus A^{(w)}$), then draw a red edge from $i$ to $i + j$.
    \item Finally, if there is an edge from $i$ to $i + j$ in $G_w$ but not in $G_v$ (so $j \in A^{(w)} \setminus A^{(v)}$), then draw a blue edge from $i$ to $i+j$. 
\end{itemize}
Thus, $G_v$ consists of the red and green edges of $G$, and $G_w$ consists of the blue and green edges. Furthermore, $G_{v+w}$ consists of the red and blue edges.

The relevance of $G$ to our problem is given by the following proposition, which is analogous to Proposition \ref{prop:graph_theory_formulation}.
\begin{proposition}
\label{prop:graph_theory_formulation_two_vector}
If $v, w \in \F_2^p$ are two small vectors with $v_0 = w_0 = 0$, then $v, w$ and $\e$ work together if and only if there is no induced subgraph $H$ of $G$ on an odd number of vertices such that, for each $i \in H$, the numbers of red, blue and green outneighbours of $i$ in $H$ are not all of the same parity.
\end{proposition}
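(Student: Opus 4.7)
The plan is to mirror the proof of Proposition \ref{prop:graph_theory_formulation}, now tracking the outdegree parities in $G_v[A]$ and $G_w[A]$ simultaneously and reading them off the three-colour structure of $G$.

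First, I would dispose of the case in which $|v|$ or $|w|$ has odd weight by exhibiting $H := G[\Z/p\Z]$ as a bad subgraph. By vertex-transitivity of $G$, every vertex of $H$ has the same constant red, blue and green outdegrees $|A^{(v)} \setminus A^{(w)}|$, $|A^{(w)} \setminus A^{(v)}|$ and $|A^{(v)} \cap A^{(w)}|$, and if, say, $|v|$ (the sum of the red and green counts) is odd, then the red and green counts differ in parity, so the three counts are not all of the same parity. The same $H$ also certifies, via Proposition \ref{prop:graph_theory_formulation}, that $v$ does not work with $\e$, whence $v, w, \e$ cannot work together either, so both directions of the iff hold trivially in this case. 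The case $|w|$ odd is symmetric. We may therefore assume $|v|$ and $|w|$ are both even, and, since $|\e|=p-1$ is even as well, adding the all-ones vector to $x$ leaves each of $\e \cdot \sigma^k x$, $v \cdot \sigma^k x$ and $w \cdot \sigma^k x$ unchanged, so it suffices to consider test vectors $x \in \F_2^p$ of odd weight.

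Next, I would reuse the inner-product calculation from the proof of Proposition \ref{prop:graph_theory_formulation}: for $x$ of odd weight with support $A$, one has $\e \cdot \sigma^{-k} x = 0$ precisely when $k \in A$, and for each such $k$, the quantities $v \cdot \sigma^{-k} x$ and $w \cdot \sigma^{-k} x$ are congruent modulo $2$ to the outdegree of $k$ in $G_v[A]$ and in $G_w[A]$, respectively. Consequently, $v, w, \e$ fail to work together on $x$ if and only if every $k \in A$ has odd outdegree in at least one of $G_v[A]$ and $G_w[A]$.

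The final step is the colour translation. Writing $r_k, b_k, g_k$ for the numbers of red, blue and green outneighbours of $k$ in $H = G[A]$, the definition of the colouring gives outdegrees $r_k + g_k$ in $G_v[A]$ and $b_k + g_k$ in $G_w[A]$; both are even precisely when $r_k \equiv g_k \equiv b_k \pmod 2$, so at least one is odd precisely when $r_k, b_k, g_k$ are not all of the same parity. Assembling the equivalences yields the proposition. The argument is essentially routine parity bookkeeping; the only delicate point is the reduction to odd-weight $x$, which is the reason the extreme case $H = G[\Z/p\Z]$ must be treated as a separate preliminary step when $|v|$ or $|w|$ is odd.
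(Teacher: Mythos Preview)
Your proposal is correct and follows essentially the same approach as the paper: both reduce to the outdegree calculation of Proposition~\ref{prop:graph_theory_formulation} and then translate parities of the $G_v$- and $G_w$-outdegrees into the three-colour condition. The paper's version is terser---it simply cites the proof of Proposition~\ref{prop:graph_theory_formulation} for the odd-weight reduction and the inner-product/outdegree identity---whereas you spell out the $|v|$ or $|w|$ odd case and the passage to odd-weight $x$ explicitly, but the logical content is identical.
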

As with Lemma \ref{lem:from_in_to_out}, the outdegrees can be replaced with indegrees. Thus, for the remainder of the proof, we will call an induced subgraph $H$ of $G$ \textit{bad} if
\begin{enumerate}
    \item $H$ has an odd number of vertices and
    \item at least one of the following holds:
    \begin{enumerate}
        \item for each vertex in $H$, the outdegrees in the three colours do not all have the same parity;
        \item for each vertex in $H$, the indegrees in the three colours do not all have the same parity.
    \end{enumerate}
\end{enumerate}
In other words, Proposition \ref{prop:graph_theory_formulation_two_vector} implies that $v$ and $w$ work together with $\e$ if and only if $G$ has no bad subgraphs.
\begin{proof}[Proof of Proposition \ref{prop:graph_theory_formulation_two_vector}]
The vectors $v$, $w$ and $\e$ fail to work together if and only if there is a vector $x$ for which any shift $\sigma^k x$ is not orthogonal to at least one of them. Let $B = \{i:x_i = 1\}$.

As shown in the proof of Proposition \ref{prop:graph_theory_formulation}, we find that $v \cdot \sigma^{-k} x = 1$ if and only if $k$ has odd outdegree in $G_v[B]$, and similarly with $v$ replaced by $w$.

Thus, $v, w$ and $\e$ fail to work together if and only if there is a subset $B$ of odd cardinality such that, for each $k \in B$, $k$ has odd outdegree in at least one of $G_v[B]$ and $G_w[B]$. 
Since $G_v$ consists of the red and green edges of $G$, the outdegree of some $k\in B$ is even in $G_v[B]$ if and only if the number of red and green outedges of $k$ in $G[B]$ has the same parity. The same statement holds for $w$ with red replaced by blue.
Hence the statement that $k$ has odd outdegree in at least one of $G_v[B]$ and $G_w[B]$ is equivalent to the assertion that the outdegrees of $k$ in $G[B]$ in the three colours are not all of the same parity.
\end{proof}
\begin{observation}
\label{obs:no_triangle}
For any colouring of the edges, a directed triangle is a bad subgraph.
\end{observation}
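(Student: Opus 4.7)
The plan is to directly unpack the definition of a bad subgraph on the induced subgraph $H = G[\{a,b,c\}]$ supported on a directed triangle $a \to b \to c \to a$. The first clause is immediate: $|V(H)| = 3$ is odd, so all that remains is to verify clause (2a) at every vertex.

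The main step is to check that $H$ contains no edges beyond the three cycle edges, so that each vertex has outdegree exactly one in $H$ with its single outgoing edge belonging to exactly one of the three colours red/blue/green. Once that is in place, the outdegree vector at each vertex of $H$ is $(1,0,0)$ in some order, so its entries are not all of the same parity, verifying clause (2a) at every vertex and concluding that $H$ is bad.

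The only slightly non-trivial point is ruling out reverse chords, for which I would use that $v$, $w$, and $v+w$ are all small (the last supplied by Lemma \ref{lem:no_ssl}). Writing $d = b - a$, a reverse edge $b \to a$ alongside $a \to b$ would force both $d$ and $-d$ to lie in $A^{(v)} \cup A^{(w)}$, and a short case split on the colour of $a \to b$ rules this out. If $d$ is green, then $d \in A^{(v)} \cap A^{(w)}$, so smallness of $v$ and of $w$ each force $-d \notin A^{(v)}$ and $-d \notin A^{(w)}$, contradicting the existence of the reverse edge. If $d$ is red, then smallness of $v$ gives $-d \notin A^{(v)}$, forcing $-d \in A^{(w)} \setminus A^{(v)}$; but then both $d$ and $-d$ lie in $A^{(v+w)} = A^{(v)} \triangle A^{(w)}$, contradicting smallness of $v+w$. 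The blue case is symmetric. Hence $H$ is precisely the directed $3$-cycle, and the parity argument goes through.

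I do not expect any real obstacle here — the observation is essentially a one-line parity check, with the chord-free property of $H$ supplied by the smallness conditions on $v$, $w$, and $v+w$ established earlier.
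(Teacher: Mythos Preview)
Your argument is correct and matches the paper's (implicit) reasoning: a directed triangle has three vertices, each with exactly one outgoing edge in $H$, so the colour-outdegree triple is a permutation of $(1,0,0)$ and clause (2a) holds. One remark: your case analysis ruling out reverse chords, while correct, is redundant here, since the paper has already established just before defining $G$ that $G$ contains no two-way edges (this is precisely the consequence of $v$, $w$, and $v+w$ all being small, the last via Lemma~\ref{lem:no_ssl}); the paper simply uses this as a standing property of $G$ rather than re-deriving it.
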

Let us write $A$ for $A^{(v)} \cup A^{(w)}$ so there is an edge from $i$ to $i+j$ in $G$ if and only if $j \in A$. Let $A_\green = A^{(v)} \cap A^{(w)}$ (so there is a green edge from $i$ to $i + j$ if and only if $j \in A_\green$) and similarly define $A_\red = A^{(v)} \setminus A^{(w)}$ and $A_\blue = A^{(w)} \setminus A^{(v)}$.

Since $v$, $w$ and $v + w$ all work together with $\e$, Proposition \ref{prop:girth_is_small} tells us they must each have girth 4 or girth 6. We will divide the remainder of the proof into cases depending on how many of $v$, $w$ and $v+w$ have girth 4 and how many have girth 6.

\begin{lemma}\label{lem:no_two_4}
Suppose that at least two of $v, w$ and $v+w$ have girth 4. Then $G$ has a bad subgraph.
\end{lemma}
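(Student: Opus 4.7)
The plan is to find a bad subgraph in $G$, ideally a directed triangle, since by Observation~\ref{obs:no_triangle} any directed triangle is automatically a bad subgraph.

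First, I would reduce to a canonical case by symmetry. The three sub-hypotheses (which pair among $\{v, w, v+w\}$ has girth $4$) are interchangeable under the action permuting $\{v, w, v+w\}$. For example, under the relabelling $(v, w) \mapsto (v, v+w)$, one has $v' + w' = w$, and tracing through the definitions shows that the colour classes $A_\red, A_\green, A_\blue$ of $G$ undergo the swap $\red \leftrightarrow \green$ (with $\blue$ fixed). Since the bad-subgraph condition is symmetric in the three colours, I may assume without loss of generality that $v$ and $w$ themselves have girth $4$. Proposition~\ref{prop:set_is_large} then gives $p/4 < |A^{(v)}|, |A^{(w)}| < p/3$, so in particular $|A^{(v)}| + |A^{(w)}| > p/2$. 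Since $v, w$ and $v+w$ are all small, an argument analogous to the one in the proof of Lemma~\ref{lem:no_ssl} shows that $A = A^{(v)} \cup A^{(w)}$ is disjoint from $-A$, and hence $|A| \leq (p-1)/2$.

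A directed triangle in $G$ corresponds to a triple $a, b, c \in A$ with $a + b + c = 0$. The girth-$4$ hypotheses on $v$ and $w$ rule out any such triple lying entirely within $A^{(v)}$ or entirely within $A^{(w)}$, so any triangle in $G$ must mix the two Cayley graphs, using at least one edge from $A_\red$ and at least one from $A_\blue$. I would apply Cauchy--Davenport to $A^{(v)} + A^{(w)}$, obtaining
\[
|A^{(v)} + A^{(w)}| \geq |A^{(v)}| + |A^{(w)}| - 1 > p/2 - 1,
\]
and then argue via careful accounting --- using the upper bounds $|A^{(v)}|, |A^{(w)}| < p/3$ coming from girth $4$, together with the identity $|A_\green| = |A^{(v)}| + |A^{(w)}| - |A|$ --- that $(A^{(v)} + A^{(w)}) \cap (-A)$ must be non-empty, producing the desired mixed triangle.

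When the elementary pigeonhole bound is inconclusive, I would fall back on Vosper's theorem (Theorem~\ref{thm:vosper}), which in the extremal case forces $A^{(v)}$ and $A^{(w)}$ to be arithmetic progressions with the same common difference. In that setting, the $4$-cycle structure of $G_v$ and $G_w$ becomes very explicit, and one can directly exhibit either a triangle or, failing that, a short induced odd cycle (such as an induced $5$-cycle) as the required bad subgraph. The main obstacle is precisely this Vosper-extremal analysis: teasing out a bad subgraph in the progression setting requires careful case analysis on the common difference of the progressions, on the relative shift between $A^{(v)}$ and $A^{(w)}$, and on the size of the green overlap $|A_\green|$; different sub-cases may require slightly different bad subgraphs (triangle, induced $5$-cycle, or larger) to be produced.
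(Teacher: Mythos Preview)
Your proposal has a genuine gap: the ``careful accounting'' you promise does not follow from the bounds you have written down. From Cauchy--Davenport you get $|A^{(v)}+A^{(w)}| > p/2 - 1$, and from smallness $|{-A}| = |A| \leq (p-1)/2$; these do not force an intersection, since their sum need not exceed $p$. Likewise, the three-fold bound $|A^{(v)} + A^{(w)} + A| \geq |A^{(v)}| + |A^{(w)}| + |A| - 2 > 3p/4 - 2$ is well short of $p$. So the pigeonhole step is simply not there, and you are left relying entirely on the unexecuted Vosper case analysis.

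The missing idea is that you never use the third vector $v+w$. It also works with $\e$, is small by Lemma~\ref{lem:no_ssl}, and has girth $4$ or $6$ by Proposition~\ref{prop:girth_is_small}; hence Proposition~\ref{prop:set_is_large} gives $|A^{(v+w)}| > p/6$. Now the identity $A^{(v+w)} = A^{(v)} \triangle A^{(w)}$ means every element of $A = A^{(v)} \cup A^{(w)}$ lies in exactly two of $A^{(v)}, A^{(w)}, A^{(v+w)}$, so
\[
|A| \;=\; \tfrac12\bigl(|A^{(v)}| + |A^{(w)}| + |A^{(v+w)}|\bigr) \;>\; \tfrac12\Bigl(\tfrac{p}{4} + \tfrac{p}{4} + \tfrac{p}{6}\Bigr) \;=\; \tfrac{p}{3}.
\]
Then $|2A_0 + A| \geq 3|A| > p$ by Cauchy--Davenport, so $0 \in 2A_0 + A$, giving a directed cycle of length at most $3$ in $G$; since $G$ has no $2$-cycles this is a triangle, and you are done by Observation~\ref{obs:no_triangle}. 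No Vosper, no case analysis.
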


\begin{proof}
Without loss of generality, we can assume that both $v$ and $w$ have girth 4. By Proposition \ref{prop:set_is_large}, $|A^{(v)}|, |A^{(w)}| > p/4$ and $|A^{(v + w)}| > p/6$. Since $A^{(v + w)} = A^{(v)} \triangle A^{(w)}$, each element $j \in A$ is in exactly 2 of $A^{(v)}$, $A^{(w)}$ and $A^{(v+w)}$. Hence, 
\begin{align*}
    |A| &= \frac{1}{2}\left(|A^{(v)}| + |A^{(w)}| + |A^{(v + w)}|\right) \\
    &> \frac{1}{2}\left(\frac{p}{4} + \frac{p}{4} + \frac{p}{6}\right)= \frac{p}{3}.
\end{align*}
Thus, the Cauchy-Davenport Theorem tells us that $|2(A \dju \{0\}) + A | \geq p$, and in particular $0$ is the sum of at most 3 elements of $A$. Thus, there is a cycle of length at most $3$ in $G$. There are no two-way edges in $G$, so $G$ contains a directed triangle.
\end{proof}

For the remaining two cases, we will require the following lemma.
\begin{figure}
    \centering
    \begin{tikzpicture}
        \node (0) at (0,0) {$0$};
        \node (r) at (4, 2) {$r$};
        \node (g) at (4,0) {$g$};
        \node (b) at (4, -2) {$b$};
        \node (rg) at (8, 2) {$r+g$};
        \node (rb) at (8,0) {$r+b$};
        \node (bg) at (8, -2) {$b+g$};
        \node (rgb) at (12,0) {$r+g+b$};
        
        \draw[->, red] (0) -- (r);
        \draw[->, red] (b) -- (rb);
        \draw[->, red] (g) -- (rg);
        \draw[->, red] (bg) -- (rgb);
        \draw[->, green] (0) -- (g);
        \draw[->, green] (r) -- (rg);
        \draw[->, green] (b) -- (bg);
        \draw[->, green] (rb) -- (rgb);
        \draw[->, blue] (0) -- (b);
        \draw[->, blue] (r) -- (rb);
        \draw[->, blue] (g) -- (bg);
        \draw[->, blue] (rg) -- (rgb);
        \draw[->, black] (rgb) to [bend left = 60] (0);
    \end{tikzpicture}
    \caption{A rainbow 4-cycle implies the existence of a subgraph as depicted in the figure. The black edge can be coloured either red, green or blue.}
    \label{fig:everything-coloured}
\end{figure}
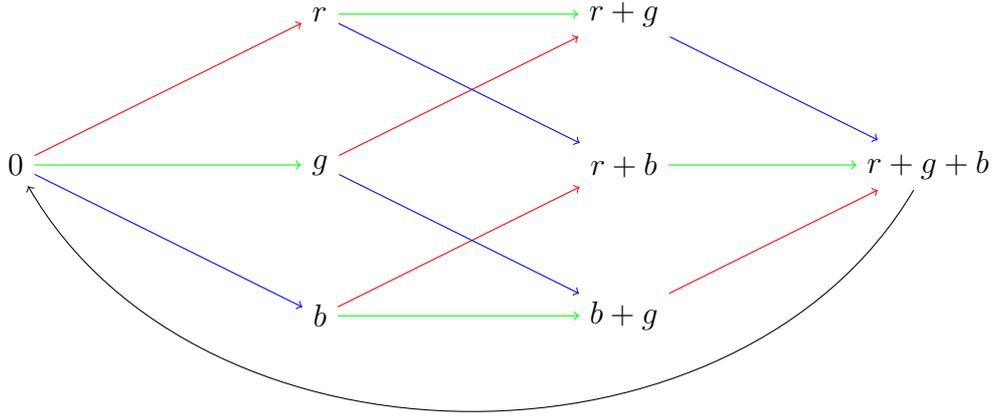

\begin{lemma}\label{lem:no_rainbow_4}
If $G$ contains a 4-cycle which includes edges of all three colours, then $G$ contains a bad subgraph.
\end{lemma}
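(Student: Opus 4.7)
The plan is to split into cases. If $G$ contains a directed triangle, it is itself a bad subgraph by Observation~\ref{obs:no_triangle}, so assume henceforth that $G$ is triangle-free. Using vertex-transitivity and the symmetry between the three colours, I may assume the rainbow 4-cycle is
\[
0 \to r \to r+g \to r+g+b \to 0,
\]
with $r \in A_\red$, $g \in A_\green$, $b \in A_\blue$, and chord shift $c = -(r+g+b) \in A_\red$ (the cases where the chord is green or blue are handled by relabelling the colours).

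Next I would exploit triangle-freeness to rule out the six shifts $\pm(r+g), \pm(r+b), \pm(g+b)$ from $A$: for example $r+g \in A$ would close the triangle $0 \to r+g \to r+g+b \to 0$ via the shifts $r+g, b, c$, while $-(r+g) \in A$ would close the triangle $0 \to r \to r+g \to 0$ via $r, g, -(r+g)$, and the remaining four are analogous. These are precisely the ``diagonal'' edges that would extend the cube-plus-chord picture of Figure~\ref{fig:everything-coloured}.

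The core step is to produce a bad induced 5-vertex subgraph of $G$. My candidate is $S = G[\{0, r, g, r+g, r+g+b\}]$. By the restrictions above, the only edges that can appear in $S$ are the six canonical cube-plus-chord edges --- $0 \to r$ red, $0 \to g$ green, $r \to r+g$ green, $g \to r+g$ red, $r+g \to r+g+b$ blue, and the chord $r+g+b \to 0$ red --- together with at most one exceptional edge $r \to g$ or $g \to r$ contributed by the shift $g-r$ or $r-g$ if one of them lies in $A$. A direct calculation shows that in the absence of this exceptional shift, or if its colour is red or blue, the outdegree parities at every vertex of $S$ are mixed across the three colours, making $S$ a bad subgraph. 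The only troublesome sub-case (up to swapping $r$ and $g$) is $g-r \in A_\green$, in which the extra green edge collapses $r$'s outdegree parity vector to $(0,0,0)$; here I would switch to the alternative subset $S' = G[\{0, r, g-r, r+g, r+g+b\}]$ and verify that now the indegree parities are mixed at every vertex of $S'$.

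The main obstacle will be handling further ``exotic'' shifts, such as $2r$, $2r+b$, or $g-2r$, that might additionally lie in $A$ and contribute further edges inside $S'$. For each such combination I expect either the mixed parities of $S'$ to survive a direct check, a suitable symmetric variant of $S$ obtained by permuting the roles of $r, g, b$ to work instead, or the exotic shift to force a three-term vanishing sum in $A$, contradicting triangle-freeness. Enumerating these subcases is elementary but by far the longest part of the argument.
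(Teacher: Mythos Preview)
Your overall setup matches the paper's: both arguments examine induced $5$-vertex subgraphs drawn from the cube $\{0,r,g,b,r+g,r+b,g+b,r+g+b\}$, beginning with $S=G[\{0,r,g,r+g,r+g+b\}]$. However, your analysis of $S$ contains a concrete error. You claim that when the exceptional edge between $r$ and $g$ is red or blue, the outdegree parities remain mixed, and that ``up to swapping $r$ and $g$'' the only bad case is $g-r\in A_\green$. But if $r-g\in A_\red$ (a red edge $g\to r$), then $g$ has red outdegree~$2$ and green/blue outdegree~$0$, all even; and $r$ has red indegree~$2$, all even --- so $S$ is \emph{not} bad in this case either. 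The swap you invoke (exchanging $r\leftrightarrow g$ together with red${}\leftrightarrow{}$green) is not a symmetry of your configuration, because you have already fixed the chord $c$ to be red; the swap would recolour it green. Thus there are genuinely two problematic subcases for $S$, namely $g-r\in A_\green$ and $r-g\in A_\red$, exactly as the paper records.

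Even granting a fix here, your fallback set $S'=\{0,r,g-r,r+g,r+g+b\}$ introduces three unconstrained shifts $2r$, $g-2r$, $2r+b$, and you defer handling these to an unspecified ``elementary but long'' enumeration. The paper avoids this entirely by \emph{not} fixing the chord colour and instead running the $S$-analysis symmetrically on all three faces $\{0,r,g,r+g,r+g+b\}$, $\{0,r,b,r+b,r+g+b\}$, $\{0,g,b,g+b,r+g+b\}$. This yields one two-way constraint on each of the edges $rg$, $rb$, $gb$; a pigeonhole on colours then forces (wlog) red edges $g\to r$ and $b\to r$. Two further $5$-sets, $\{0,r,b,r+g,r+g+b\}$ and $\{0,b,g,r+g,r+g+b\}$, each have at most one undetermined edge and finish the argument with no exotic shifts. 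So the paper's route is both shorter and free of the case explosion you anticipate.
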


\begin{proof}
Suppose that $G$ contains a 4-cycle which includes edges of all three colours. Label a red edge $r$ (so the edge is directed from some $i$ to $i+r$), label a blue edge $b$, a green edge $g$ and the final edge $x$. Since this proof doesn't use any properties of the individual colours, we can assume, without loss of generality, that the edges are in the order $r, g, b, x$. We will consider subgraphs induced on (subsets of) $\{0, r, g, b, r+g, r+b, b+g, r+ g + b\}$.

First, consider the subgraph $H = G[\{0,r,g, r+g, r+g+b\}]$, as illustrated by Figure \ref{fig:subset-coloured}. 
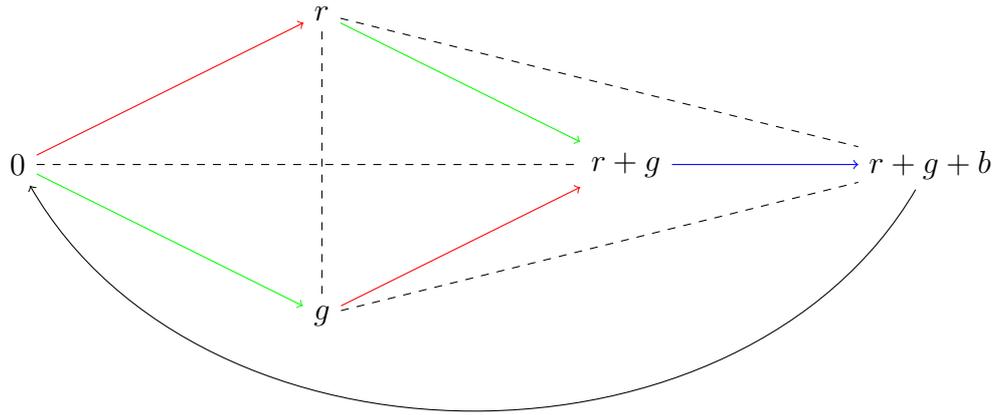
\begin{figure}[ht]
    \centering
      \begin{tikzpicture}
        \node (0) at (0,0) {$0$};
        \node (r) at (4, 2) {$r$};
        \node (g) at (4, -2) {$g$};
        \node (rg) at (8, 0) {$r+g$};
        \node (rgb) at (12,0) {$r+g+b$};
        
        \draw[->, red] (0) -- (r);
        \draw[dashed] (0) -- (rg);
        \draw[dashed] (r) -- (rgb);
        \draw[dashed] (g) -- (rgb);
        \draw[dashed] (r) -- (g);
        \draw[->, red] (g) -- (rg);
        \draw[->, green] (0) -- (g);
        \draw[->, green] (r) -- (rg);
        \draw[->, blue] (rg) -- (rgb);
        \draw[->, black] (rgb) to [bend left = 60] (0);
    \end{tikzpicture}
    \caption{If none of the dotted edges are present, then the depicted subgraph of $G$ is a bad subgraph.}
    \label{fig:subset-coloured}
\end{figure}
Observe that there can be no edge between $0$ and $r+g$. If there were an edge from $0$ to $r+g$, then $\{0, r+g, r+g+b\}$ would induce a directed triangle, and if there were an edge from $r+g$ to $0$ then $\{0, g, r+g\}$ would induce a directed triangle. In either case, we obtain a bad subgraph. Similarly, there can be no edge between $r+g+b$ and either $r$ or $g$. Thus, the only possible edge that can be added is between $r$ and $g$.

Now, observe that if there is no edge between $r$ and $g$, then $H$ is a bad subgraph. In fact, the only way to stop $H$ from being bad is to have a red edge from $g$ to $r$, or a green edge from $r$ to $g$.

The same argument can be applied to subgraphs induced by $\{0,r,b, r+b, r+g+b\}$ and $\{0, g, b, b + g, r + g + b\}$ to show that there is either a blue edge from $r$ to $b$ or a red edge from $b$ to $r$, and either a blue edge from $g$ to $b$ or a green edge from $b$ to $g$. If we take edges of all three colours, then $\{r, g, b\}$ would induce a directed triangle. Thus, one colour appears twice between pairs of $r, g$ and $b$; without loss of generality, that colour is red. We may hence assume there are red edges directed from $g$ and $b$ to $r$.

Now, consider the subgraph $H$ induced by the vertices $\{0, r, b, r+g, r + g +b\}$, as seen in Figure \ref{fig:subset-coloured-2}. 

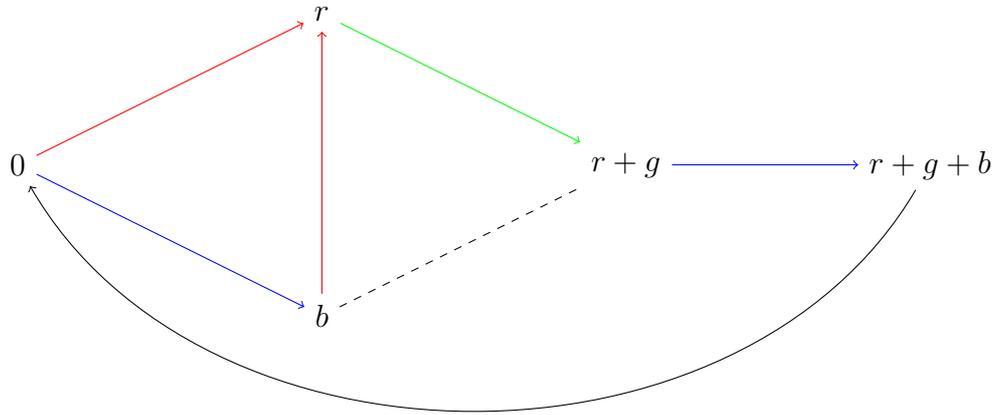
\begin{figure}
    \centering
    \begin{tikzpicture}
        \node (0) at (0,0) {$0$};
        \node (r) at (4, 2) {$r$};
        \node (b) at (4, -2) {$b$};
        \node (rg) at (8, 0) {$r+g$};
        \node (rgb) at (12,0) {$r+g+b$};
        
        \draw[->, red] (0) -- (r);
        \draw[->, red] (b) -- (r);
        \draw[dashed] (b) -- (rg);
        \draw[->, green] (r) -- (rg);
        \draw[->, blue] (0) -- (b);
        \draw[->, blue] (rg) -- (rgb);
        \draw[->, black] (rgb) to [bend left = 60] (0);
    \end{tikzpicture}
    \caption{Vertex $r$ has an even number of edges coming in for each colour. If the dotted line is not an edge, then the depicted subgraph is bad on outdegrees.}
        \label{fig:subset-coloured-2}
\end{figure}
As before, the only edge that is undetermined is a possible edge between $b$ and $r+g$; such an edge cannot be directed from $r+g$ to $b$, else $\{b, r, r+g\}$ induces a triangle.

If there is no edge from $b$ to $r+g$, then $H$ is bad. Indeed, $H$ is bad unless there is a red edge from $b$ to $r+g$.

Finally, consider the subgraph induced on $\{0, b, g, r +g, r+b+g\}$ as depicted in Figure \ref{fig:subset-coloured-3}. 

\begin{figure}
    \centering
    \begin{tikzpicture}
        \node (0) at (0,0) {$0$};
        \node (b) at (4, 2) {$b$};
        \node (g) at (4, -2) {$g$};
        \node (rg) at (8, 0) {$r+g$};
        \node (rgb) at (12,0) {$r+g+b$};
        
        \draw[->, blue] (0) -- (b);
        \draw[->, green] (0) -- (g);
        \draw[->, red] (b) -- (rg);
        \draw[->, red] (g) -- (rg);
        \draw[->, blue] (rg) -- (rgb);
        \draw[dashed] (b) -- (g);
        \draw[->, black] (rgb) to [bend left = 60] (0);
    \end{tikzpicture}
    \caption{There is no way to add an edge to the depicted subgraph without creating a bad subgraph.}
    \label{fig:subset-coloured-3}
\end{figure}
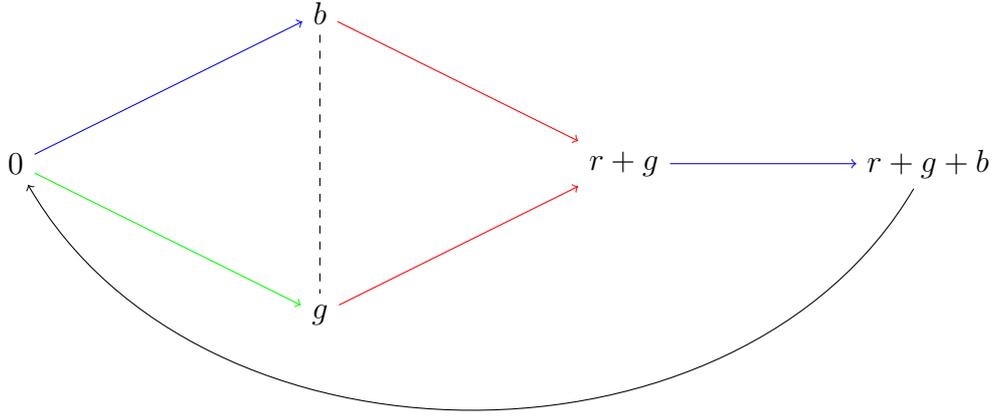
We know that there is an edge between $b$ and $g$ and that edge is either blue or green. In both cases, this is the bad subgraph we sought.
\end{proof}

\begin{lemma}\label{lem:no_666}
Suppose that $v, w$ and $v+w$ all have girth 6. Then $G$ has a bad subgraph.
\end{lemma}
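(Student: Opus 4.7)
The plan is to show that under the girth-6 assumption for all three vectors, the sets $A^{(v)}, A^{(w)}, A^{(v+w)}$ are each large enough that $A = A^{(v)} \cup A^{(w)}$ satisfies $|A| > p/4$, which via Cauchy--Davenport forces $G$ to have a short cycle. Concretely, Proposition \ref{prop:set_is_large} gives $|A^{(v)}|, |A^{(w)}|, |A^{(v+w)}| > p/6$, and since $A^{(v+w)} = A^{(v)} \triangle A^{(w)}$ each element of $A$ lies in exactly two of these three sets. Therefore
\[
|A| = \tfrac{1}{2}\bigl(|A^{(v)}| + |A^{(w)}| + |A^{(v+w)}|\bigr) > \tfrac{p}{4}.
\]

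Next I would pin down the girth of $G$. Observation \ref{obs:no_triangle} rules out triangles. Any 4-cycle in $G$ that uses at most two colours lies in one of the two-colour subgraphs $G_v, G_w, G_{v+w}$; but each has girth 6 by assumption, so no such 4-cycle exists. A 4-cycle using all three colours would be a rainbow 4-cycle, and Lemma \ref{lem:no_rainbow_4} then produces a bad subgraph. Thus we may assume $G$ has girth at least 5.

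Finally, I would use Cauchy--Davenport (applied four times) to deduce
\[
|5A| \geq \min(5|A| - 4,\, p) = p,
\]
where the first term exceeds $p$ because $|A| > p/4$ and $p > 43$. Hence $0 \in 5A$, so there exist $a_1,\dots,a_5 \in A$ with $a_1 + \cdots + a_5 = 0$, giving a closed walk of length $5$ in $G$ through $0$. Because $G$ has no cycle of length less than $5$, this walk cannot have a repeated vertex (else it would contain a shorter cycle), so it is a genuine $5$-cycle, and being a shortest cycle it is induced. In this induced $5$-cycle every vertex has exactly one outgoing edge of one colour and zero outgoing edges of each of the other two colours; so the out-degree-parities at every vertex form the pattern $(1,0,0)$ (in some order), which is not constant. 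Thus the $5$-cycle is a bad subgraph, contradicting that $\e, v, w$ work together.

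There is no real obstacle here beyond verifying the sumset inequality; the work was effectively done in Lemmas \ref{lem:no_rainbow_4} and \ref{lem:no_two_4}, which together push the girth of $G$ to at least $5$, at which point the $|A| > p/4$ bound is comfortably enough to force a $5$-cycle. The only thing one has to be careful about is that the closed walk produced by Cauchy--Davenport is an actual cycle, which follows automatically from the girth-$\geq 5$ hypothesis.
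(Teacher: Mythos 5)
Your proof is correct, and its main branch is exactly the paper's argument: the girth-6 hypotheses give $|A^{(v)}|,|A^{(w)}|,|A^{(v+w)}|>p/6$, hence $|A|>p/4$; a short cycle in $G$ cannot be a triangle (Observation \ref{obs:no_triangle}) and cannot use only two colours (it would lie in one of $G_v$, $G_w$, $G_{v+w}$, all of girth 6), so it is a rainbow 4-cycle and Lemma \ref{lem:no_rainbow_4} finishes. The only divergence is your second branch: because you apply Cauchy--Davenport to $5A$ rather than to $3A_0+A$, you only obtain a cycle of length \emph{exactly} 5 in the residual case, and so you need the extra observation that an induced odd cycle is bad. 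The paper instead notes $|3A_0+A|\geq\min(4|A|,p)=p$, so $0$ is already a sum of at most four elements of $A$ and a cycle of length at most 4 always exists; this makes your girth-$\geq 5$ case vacuous. Your handling of that case is nonetheless sound (the closed walk from $0\in 5A$ has distinct partial sums since $0\notin kA$ for $k\leq 4$, and a shortest directed cycle in a Cayley digraph is induced), so this is a harmless redundancy rather than a gap.
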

\begin{proof}
As in the proof of Lemma \ref{lem:no_two_4}, we learn that $|A| > p/4$, and so, by 3 applications of the Cauchy-Davenport Theorem, $G$ contains a cycle of length at most 4. Since there can be no directed triangles (Observation \ref{obs:no_triangle}), the girth must be at least $4$, so there is an induced 4-cycle $C$. 

Suppose that $C$ contains only red and green edges; then $C$ is contained entirely within $G_{v}$, contradicting the fact that $v$ has girth 6. Therefore, $C$ must contain at least one blue edge. Similarly $C$ must contain at least one red edge and at least one green edge. Thus, $C$ must contain all three colours, and so we are done by Lemma \ref{lem:no_rainbow_4}.
\end{proof}

\begin{lemma}\label{lem:no_664}
Suppose that $v$ and $w$ have girth 6, and $v + w$ has girth 4. Then $G$ has a bad subgraph.
\end{lemma}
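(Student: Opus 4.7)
The plan is to use the hypothesis that $v+w$ has girth $4$ to fix an induced 4-cycle $C = \{0, a, a+b, a+b+c\}$ in $G_{v+w}$, with $a, b, c, d := -(a+b+c) \in A^{(v+w)}$, and then produce a bad subgraph in $G$ by analysing $C$ inside $G$. The first step is to check whether $C$ remains induced in $G$. The only potentially missing chords are labelled by $\pm(a+b)$ or $\pm(b+c)$; the other candidate diagonals are ruled out automatically because $v, w, v+w$ are all small (Lemma~\ref{lem:no_ssl}), so $A \cap (-A) = \emptyset$. None of $\pm(a+b), \pm(b+c)$ can lie in $A^{(v+w)}$ (this would give a triangle in $G_{v+w}$, contradicting its girth $4$), so any chord of $C$ in $G$ must be green. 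But a green chord at a diagonal of $C$ closes a directed triangle in $G$ (together with two edges of $C$), yielding a bad subgraph by Observation~\ref{obs:no_triangle}.

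If no such chord exists, $C$ is already induced in $G$ and uses only red and blue edges. Since $G_v$ and $G_w$ each have girth $6$, neither all-red nor all-blue is possible, so the colour pattern on the four edges of $C$ must match one of RRRB, RBBB, RRBB, or RBRB (up to cyclic rotation). I would then proceed by a case analysis on these patterns. In each case the aim is to find an additional vertex $x \in \Z/p\Z$ so that the subgraph of $G$ induced on $C \cup \{x\}$ is bad (necessarily five vertices, hence odd). Each vertex of $C$ already has out-degree $1$ in a single colour within $C$, so $x$ must be chosen so that no $y \in C$ has a $y \to x$ edge whose colour matches $y$'s out-edge in $C$ (otherwise $y$'s out-degree profile flips from $(1,0,0)$ to $(2,0,0)$ and becomes uniformly even), and so that $x$'s own out-degrees towards $C$ are not uniformly even across the three colours. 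The tools available are: the smallness of $v, w, v+w$; the absence of monochromatic and bichromatic triangles in $G$ forced by the various girths (giving exclusions such as $-(A_\red+A_\red), -(A_\blue+A_\blue), -(A_\red+A_\blue) \subseteq (\Z/p\Z) \setminus A$); the absence of rainbow triangles, already handled in the first case; and the Cauchy--Davenport lower bounds $|A^{(v)}|, |A^{(w)}| > p/6$ and $|A^{(v+w)}| > p/4$.

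The main obstacle is the case analysis in this second step. Once $C$ is an induced red-blue 4-cycle of a specified colour pattern, pinpointing the correct fifth vertex is combinatorially delicate, and I expect that in some patterns (especially RBRB, whose cyclic symmetry frustrates the naïve extensions) a direct construction of the bad subgraph will fail. In those cases the fallback is to argue indirectly: travel from a vertex of $C$ along a green edge and try to locate a rainbow 4-cycle elsewhere in $G$, then invoke Lemma~\ref{lem:no_rainbow_4} rather than exhibiting an explicit five-vertex bad subgraph.
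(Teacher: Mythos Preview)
Your opening reduction is sound: any shortest $4$-cycle of $G_{v+w}$ either acquires a green chord in $G$ (giving a directed triangle, hence a bad subgraph by Observation~\ref{obs:no_triangle}) or sits inside $G$ as an induced red--blue $4$-cycle. The implicit fact $A_\green \neq \emptyset$ is also available, since $|A^{(v+w)}| < p/3$ while $|A^{(v)}| + |A^{(w)}| > p/3$.

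However, the proposal stops short of a proof precisely where the real difficulty lies. You say you would ``proceed by a case analysis'' on the four colour patterns and then concede that you ``expect that in some patterns \dots\ a direct construction of the bad subgraph will fail.'' No construction is actually given for any of the four patterns, and the fallback --- ``travel from a vertex of $C$ along a green edge and try to locate a rainbow $4$-cycle elsewhere'' --- is not an argument but a hope. In the RBRB and RRBB cases especially, there is no reason a single fifth vertex should suffice: green is absent from $C$, so the parity constraints you impose on a fifth vertex $x$ involve only red and blue, and nothing prevents all four vertices of $C$ from having no out-edge to $x$ while $x$ has no out-edge into $C$. Your framework discards the green colour at exactly the moment it becomes essential.

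The paper's route is genuinely different. Rather than starting from the $4$-cycle in $G_{v+w}$ and ignoring green, it first proves that \emph{no} $4$-cycle of $G$ can contain a green edge (combining Lemma~\ref{lem:no_rainbow_4} with the girth-$6$ hypothesis on $v$ and $w$). It then picks any $g \in A_\green$ and uses Cauchy--Davenport on $A^{(v+w)}$, whose size exceeds $p/4$, to write $-g = a_1 + a_2 + a_3 + a_4$ with each $a_i \in A^{(v+w)}_0$ and all $a_i$ nonzero (fewer nonzero terms would produce a $3$- or $4$-cycle containing $g$, already excluded). This gives a $5$-cycle with exactly one green edge and at least one edge of each other colour. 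A detailed analysis on six explicit vertices then forces $r - g \in A_\red$ for a suitable red $r$; iterating $r \mapsto r - g$ covers all of $\Z/p\Z$ since $p$ is prime, a contradiction. The idea you are missing is to build the green edge into the configuration from the outset and extract an \emph{iterable} conclusion, rather than trying to finish inside a purely red--blue $4$-cycle.
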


\begin{proof}

As in the proof of Lemma \ref{lem:no_666}, since $G_{v}$ and $G_{w}$ do not contain a 4-cycle, we find that there cannot be a 4-cycle in $G$ with no blue edges or with no red edges. Since there cannot be a 4-cycle with all three colours by Lemma \ref{lem:no_rainbow_4}, we find that $G$ cannot have a 4-cycle containing a green edge. Note that $G$ can also not contain an induced directed 3-cycle or 5-cycle, as such a subgraph is bad.

By Proposition \ref{prop:set_is_large}, $p/4<|A^{(v+w)}|<p/3$, and by Cauchy-Davenport
\[
|3A^{(v+w)}_0+A^{(v+w)}|\geq \min\{p, 4|A^{(v+w)}|\}\geq p.
\]
Thus, $3A^{(v+w)}_0+A^{(v+w)}$ is the whole of $\Z/p\Z$. Consequently, for any $g\in A_\green$, we can find $a_1,\dots,a_4\in A^{(v+w)}_0$ not all zero such that $a_1+a_2+a_3+a_4=-g$. Note that $-g\not \in A^{(v+w)}=A_\red\cup A_\blue$, so at least two of the $a_i$ are non-zero. Exactly two non-zero $a_i$ corresponds to a directed 3-cycle, whereas exactly three non-zero $a_i$ would correspond to a directed 4-cycle with a green edge, both of which cannot exist by the observations above. We conclude that all $a_i$ must be non-zero and that $G$ contains a (non-induced) 5-cycle $C$ with exactly one green edge.

The cycle $C$ must contain at least one red edge and at least one blue edge. Indeed, suppose $C$ doesn't contain a red edge. Then $C$ consists entirely of blue and green edges and is contained in $G_w$, which contradicts our assumption that the girth of $G_w$ is 6. The argument that $C$ contains a blue edge is identical. Without loss of generality there are at least two red edges, so we may assume that $r = a_1\in A_\red$, $r' = a_2 \in A_\red$ and $ b = a_3 \in A_\blue$. Set $ x= a_4 \in A_\red \cup A_\blue$.

We will now consider subgraphs induced on (subsets of) $\{0,b,b+g,b+r,b+g+r,-x\}$ as in Figure \ref{fig:two-pentagons}.
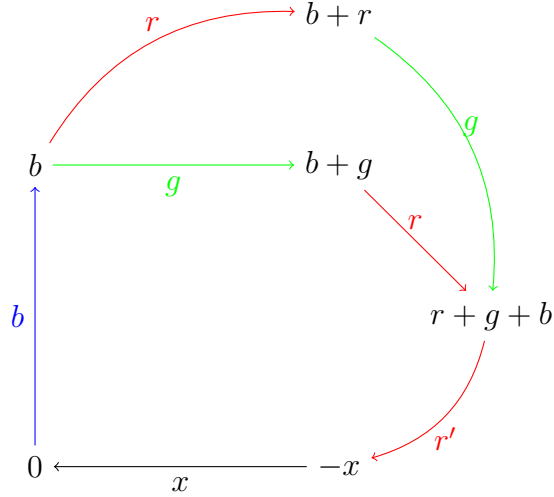
\begin{figure}
    \centering
    \begin{tikzpicture}
        \node (A) at (0,0) {$0$};
        \node (B) at (0,4) {$b$};
        \node (C) at (4,4) {$b+g$};
        \node (D) at (4,6) {$b+r$};
        \node (E) at (6,2) {$r+g+b$};
        \node (F) at (4,0) {$-x$};
\path[->]
(A) edge[blue] node[left] {$b$} (B)
(B) edge[green] node[below] {$g$} (C)
(B) edge[bend left,red] node[above] {$r$} (D)
(C) edge[red] node[above] {$r$} (E)
(D) edge[bend left,green] node[above] {$g$} (E)
(E) edge[red, bend left] node[midway,below] {$r'$} (F)
(F) edge[] node[below] {$x$} (A)
   ;
    \end{tikzpicture} 
    \caption{If the graph $G$ contains a 5-cycle, then we may assume it contains the subgraph depicted.}
    \label{fig:two-pentagons}
\end{figure}

First, consider $H = G[\{0, b, b+g, r+g+b, -x\}]$. Since $G$ has no induced 5-cycles, there must be another edge present. As shown in Figure \ref{fig:two-pentagons-extra-edges}, there are only two possible edges that can be added without creating a 3-cycle or a 4-cycle containing a green edge, from $0$ to $b + g$ and from $b$ to $r + g + b$.
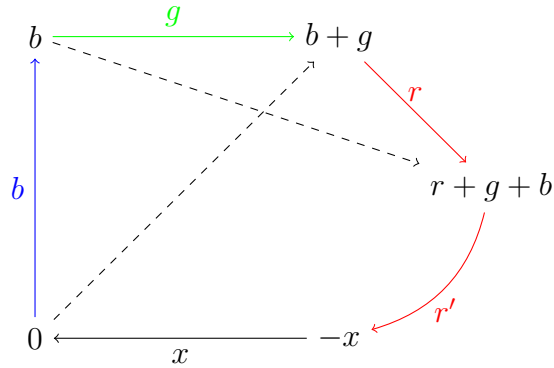
\begin{figure}
    \centering
    \begin{tikzpicture}
        \node (A) at (0,0) {$0$};
        \node (B) at (0,4) {$b$};
        \node (C) at (4,4) {$b+g$};
        \node (E) at (6,2) {$r+g+b$};
        \node (F) at (4,0) {$-x$};
\path[->]
(A) edge[blue] node[left] {$b$} (B)
(A) edge[dashed] node[] {} (C)
(B) edge[green] node[above] {$g$} (C)
(B) edge[dashed] node[] {} (E)
(C) edge[red] node[above] {$r$} (E)
(E) edge[red, bend left] node[midway,below] {$r'$} (F)
(F) edge[] node[below] {$x$} (A)
   ;
    \end{tikzpicture}
    \caption{The only edges that can be added to the subgraph without creating a rainbow 4-cycle or a directed triangle are depicted with dotted lines.}
    \label{fig:two-pentagons-extra-edges}
\end{figure}
Since $\{0, b, b+g, r+g+b, - x\}$ is an odd-sized set, $H$ will be bad unless there is a vertex for which the indegree has the same parity in every colour, and also a vertex for which the outdegree has the same parity in every colour. 

As we cannot add green edges without creating a 4-cycle with a green edge, $H$ will be bad because of indegree unless we have a red edge from $b$ to $b+g+r$. Similarly, $H$ will be bad because of outdegree unless we have a blue edge from $0$ to $b+g$.

A similar analysis on $\{0, b, b+r, r+g+b, -x \}$ shows us that there is a red edge from $b+r$ to $-x$, and so the only unknown edge is a possible edge between $b+g$ and $b+r$, as illustrated by Figure \ref{fig:two-pentagons-complete} (on the next page).
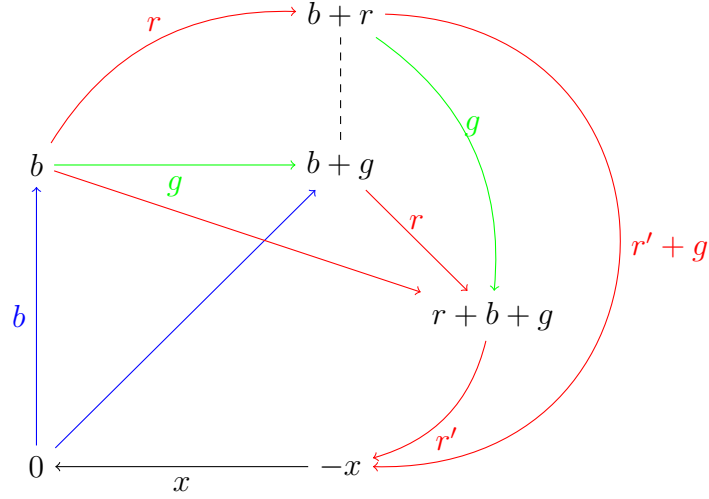
\begin{figure}
    \centering
    \begin{tikzpicture}
        \node (A) at (0,0) {$0$};
        \node (B) at (0,4) {$b$};
        \node (C) at (4,4) {$b+g$};
        \node (D) at (4,6) {$b+r$};
        \node (E) at (6,2) {$r+b+g$};
        \node (F) at (4,0) {$-x$};
\path[->]
(A) edge[blue] node[left] {$b$} (B)
(A) edge[blue] node[] {} (C)
(B) edge[green] node[below] {$g$} (C)
(B) edge[bend left,red] node[above] {$r$} (D)
(B) edge[red] node[] {} (E)
(C) edge[red] node[above] {$r$} (E)
(C) edge[-,dashed] node[] {} (D)
(D) edge[bend left,green] node[above] {$g$} (E)
(E) edge[red, bend left] node[midway,below] {$r'$} (F)
(F) edge[] node[below] {$x$} (A)
(D.east) edge[red, bend left=90, looseness=1.8] node[right] {$r'+g$} (F.east)
   ;
    \end{tikzpicture}  
    \caption{The dotted line indicates the only place where we might be able to add an edge to the subgraph.}
    \label{fig:two-pentagons-complete}
\end{figure}
We make the following two claims:
\begin{enumerate}
    \item There must be either a red edge from $b+g$ to $b+r$ or a red edge from $b+r$ to $b+g$.
    \item There must be either a red edge from $b+g$ to $b+r$ or a blue edge from $b+r$ to $b+g$.
\end{enumerate}
To see the first claim, consider $H = G[\{0,b,b+g,b+r,-x\}]$. The only way that $H$ can have a vertex all of whose indegrees have the same parity is if there is a red edge from $b+g$ to $b+r$ or a red edge from $b+r$ to $b+g$. Similarly, the second claim follows by considering the outdegrees of $G[\{0, b+g, b+r, b+g+r,-x\}]$.

The only way both 1 and 2 can hold is if there is a red edge from $b+g$ to $b+r$. In particular, $r - g \in A_\red$.
\begin{figure}
    \centering
    \begin{tikzpicture}
        \node (A) at (0,0) {$0$};
        \node (B) at (0,4) {$b$};
        \node (C) at (4,4) {$b+g$};
        \node (D) at (4,6) {$b+r$};
        \node (E) at (6,2) {$r+g+b$};
        \node (F) at (4,0) {$-x$};
\path[->]
(A) edge[blue] node[left] {$b$} (B)
(A) edge[blue] node[] {} (C)
(B) edge[green] node[below] {$g$} (C)
(B) edge[bend left,red] node[above] {$r$} (D)
(B) edge[red] node[] {} (E)
(C) edge[red] node[above] {$r$} (E)
(C) edge[red] node[left] {$r-g$} (D)
(D) edge[bend left,green] node[above] {$g$} (E)
(E) edge[red, bend left] node[midway,below] {$r'$} (F)
(F) edge[] node[below] {$x$} (A)
(D.east) edge[red, bend left=90, looseness=1.8] node[right] {$r'+g$} (F.east)
   ;
    \end{tikzpicture}  
    \caption{Starting from the 5-cycle depicted in Figure \ref{fig:two-pentagons-extra-edges}, we show $G$ either contains a bad subgraph or it contains the subgraph depicted in the figure.}
    \label{fig:two-pentagons-everything}
\end{figure}
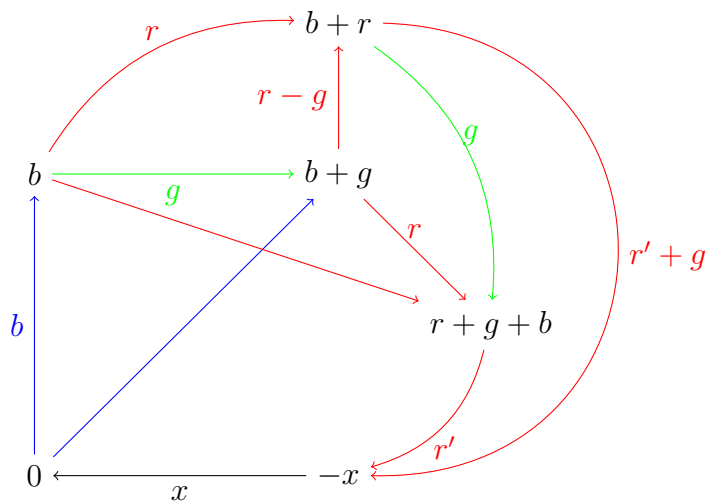

Thus, given a 5-cycle $b + r + g + r' + x = 0$, we get that $r-g$ and $r'+g$ are red, and $b + (r-g) + g + (r' + g) + x = 0$ is also a 5-cycle. We may iterate this argument to see that each element of the sequence $r, r-g, r-2g, \dots$ will be red. However, $p$ is prime, which means that the sequence will hit every element of $\Z/p\Z$. This yields the required contradiction, and so $G$ must have a bad subgraph.
\end{proof}

Lemmas \ref{lem:no_two_4}, \ref{lem:no_666} and \ref{lem:no_664} combine to show that regardless of the girths of $v$ and $w$, $G$ must have a bad subgraph, and so $v$ and $w$ cannot work together with $\e$. This completes the proof of Theorem \ref{thm:main_theorem}.

\section{Generalisation to arbitrary finite fields}
\label{sec:hqn}
We will now investigate $h_q(n)$, the maximal codimension of a cyclically covering subspace in $\F_q^n$, for $q$ a prime power. We consider this problem in the orthogonal complement in an analogous way to the case $q=2$: 
we say that a vector $v \in \mathbb{F}_q^n$ \emph{works} if for every $x \in \mathbb{F}_q^n$ there is a $k\in \{0,\dots, n-1\}$ such that $v \cdot \sigma^k x = 0$, and that the vectors $v^{(1)}, v^{(2)}, \dotsb , v^{(m)}$ \emph{work together} if for every $x \in \mathbb{F}_q^n$ there exists a $k$ such that \[ v^{(1)} \cdot \sigma^k x = v^{(2)} \cdot \sigma^k x = \dotsb = v^{(m)} \cdot \sigma^k x = 0 .\] 
As before, $h_q(n)$ is given by the largest $m$ for which there exist $v^{(1)}, v^{(2)}, \dots, v^{(m)}$ that are linearly independent and work together. 

The following result is an analogue of Theorem \ref{thm:sum_mult_bound}, and can be proven by replacing $2$ with $q$ in the proof of Theorem \ref{thm:sum_mult_bound}.
\begin{theorem}\label{thm:sum_mult_bound_q}
Let $q$ be a prime power. For all $m,n \in \mathbb{N}$, $h_q(mn) \geq h_q(m)+ h_q(n)$.
\end{theorem}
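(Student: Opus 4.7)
The plan is to mirror, step for step, the proof of Theorem~\ref{thm:sum_mult_bound}, checking that every operation used there remains valid over an arbitrary $\F_q$ rather than just $\F_2$. Start with $a=h_q(m)$ linearly independent vectors $v^{(1)},\dots,v^{(a)}\in\F_q^m$ that work together and $b=h_q(n)$ linearly independent vectors $w^{(1)},\dots,w^{(b)}\in\F_q^n$ that work together; the goal is to build $a+b$ linearly independent vectors in $\F_q^{mn}$ that work together.

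The construction is the same as before: define $\widetilde{v}^{(i)}\in\F_q^{mn}$ by repeating each coordinate of $v^{(i)}$ exactly $n$ times, and $\widetilde{w}^{(j)}\in\F_q^{mn}$ as the concatenation of $m$ copies of $w^{(j)}$. To verify that these work together, given $\widetilde{x}\in\F_q^{mn}$ I would form the compressions
\[
y_s \;=\; \sum_{r=0}^{m-1}\widetilde{x}_{rn+s}\in\F_q, \qquad z_r \;=\; \sum_{s=0}^{n-1}\widetilde{x}_{rn+s}\in\F_q,
\]
and use the identities $\widetilde{w}^{(j)}\cdot\sigma^k\widetilde{x}=w^{(j)}\cdot\sigma^k y$ and $\widetilde{v}^{(i)}\cdot\sigma^{n\ell}\widetilde{x}=v^{(i)}\cdot\sigma^\ell z$, which hold formally in any field. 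The first identity supplies a shift $k$ after which every $\widetilde{w}^{(j)}$ is orthogonal to $\sigma^k\widetilde{x}$; replacing $\widetilde{x}$ by $\sigma^k\widetilde{x}$ reduces to $k=0$. The second identity then supplies $\ell$ making every $\widetilde{v}^{(i)}$ orthogonal after shifting by $n\ell$, and because the extra shift is a multiple of $n$ it only cyclically permutes $y$, so orthogonality of the $\widetilde{w}^{(j)}$ is not destroyed.

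For linear independence, the families $\{\widetilde{v}^{(i)}\}$ and $\{\widetilde{w}^{(j)}\}$ are each independent because the original families are, so it suffices to prove that their spans meet only in $0$. Applying the working-together property of $v^{(1)},\dots,v^{(a)}$ to $x=e_0\in\F_q^m$ produces an index $r$ with $v^{(i)}_r=0$ for every $i$; hence every vector in the span of the $\widetilde{v}^{(i)}$ vanishes on the whole block $\{rn,\dots,rn+n-1\}$. On the other hand, any nonzero combination of the $\widetilde{w}^{(j)}$ restricted to that block coincides with the corresponding combination of the $w^{(j)}\in\F_q^n$, which is nonzero by linear independence.

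I expect no genuine obstacle: nothing in the $q=2$ proof uses $1+1=0$ or any other $\F_2$-specific relation. Each manipulation (sums of coordinates, dot products, cyclic shifts, the two key identities above) is a formal calculation over an arbitrary commutative ring. The one ingredient worth flagging is the extraction of the ``empty coordinate" $r$ used for linear independence, but this is characteristic-free, since it follows purely from testing the working-together hypothesis on $e_0$. With these observations in hand, the proof of Theorem~\ref{thm:sum_mult_bound} transfers verbatim to yield $h_q(mn)\geq h_q(m)+h_q(n)$.
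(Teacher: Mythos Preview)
Your proposal is correct and is exactly the approach the paper takes: the authors simply remark that one may replace $2$ by $q$ throughout the proof of Theorem~\ref{thm:sum_mult_bound}, and you have spelled this out. One small slip worth fixing: the extra shift by $n\ell$ does not cyclically permute $y$, it leaves $y$ \emph{unchanged} (since $y_s=\sum_r \widetilde{x}_{rn+s}$ is invariant under re-indexing $r$), and this is precisely what is needed---a genuine cyclic permutation of $y$ would not preserve $w^{(j)}\cdot y=0$.
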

Theorem \ref{thm:mult2} generalises as follows.
\begin{theorem}
\label{thm:pboundforq}
Let $p$ be a prime, and let $q$ be a power of $p$. Then $h_q(pn)\leq ph_q(n)$. 
\end{theorem}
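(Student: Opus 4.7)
The plan is to generalise the proof of Theorem~\ref{thm:mult2} using a filtration that exploits characteristic~$p$. Since $q$ is a power of $p$, we have $X^{pn} - 1 = (X^n - 1)^p$ in $\F_q[X]$, so under the polynomial identification from Section~\ref{sec:rootsandconj} we may write $\F_q^{pn} \cong R := \F_q[X]/(T^p)$, where $T := X^n - 1$ is nilpotent of order $p$. Cyclic shift corresponds to multiplication by $X$, and the quotient $R/TR \cong R' := \F_q[X]/(X^n - 1)$ recovers $\F_q^n$.

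Take linearly independent $v^{(1)}, \dots, v^{(a)} \in R$ that work together, and set $V := \Span(v^{(1)}, \dots, v^{(a)})$. I would study the descending filtration
\[
V = V \cap R \supseteq V \cap TR \supseteq V \cap T^2 R \supseteq \dots \supseteq V \cap T^p R = 0,
\]
with successive dimension drops $d_j := \dim(V \cap T^j R) - \dim(V \cap T^{j+1} R)$ summing to $a$. Any $v \in V \cap T^j R$ can be written uniquely as $v = T^j u + T^{j+1} r$ where $u$ has degree less than $n$ (so represents an element of $R'$) and $r \in R$; the map $v \mapsto u$ descends to an injection $(V \cap T^j R)/(V \cap T^{j+1} R) \hookrightarrow R'$. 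The goal is to show that the image consists of vectors that work together in $R'$, which would give $d_j \leq h_q(n)$ and hence $a \leq p \, h_q(n)$.

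The key computation relies on the characteristic-$p$ identity $(X^n - 1)^{p-1} = 1 + X^n + \dots + X^{(p-1)n}$, which follows from the binomial theorem and $\binom{p-1}{i} \equiv (-1)^i \pmod p$. Consequently, for any polynomial $z$ of degree less than $n$, $T^{p-1} z$ corresponds to the vector $(z, z, \dots, z) \in \F_q^{pn}$ consisting of $p$ copies of $z$. Then for $v \in V \cap T^j R$ with $T^j$-coefficient $u$, and for any $y \in R'$, using $T^p = 0$ one computes
\[
v \, T^{p-1-j} y = T^{p-1}(uy),
\]
where $uy$ denotes the product taken in $R'$; hence the $X^k$-coefficient of $v \, T^{p-1-j} y$ equals the $X^{k \bmod n}$-coefficient of $uy$. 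Applying the working-together property of the $v^{(i)}$ to the test vector $\tilde y := T^{p-1-j} y$ yields a single $k$ for which $(v^{(i)} \tilde y)_k = 0$ for all $i$; restricting to those indices with $v^{(i)} \in V \cap T^j R$ gives a common $k_0 := k \bmod n$ with $(u^{(i)} y)_{k_0} = 0$ in $R'$, so the images work together.

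The technical content lies in the ring-theoretic bookkeeping in the key computation, particularly in verifying the identity for $T^{p-1}$ in characteristic $p$ and the reduction $v \, T^{p-1-j} y = T^{p-1}(uy)$. Once these elementary identities are in place, summing $d_j \leq h_q(n)$ over $j = 0, 1, \dots, p-1$ yields the theorem. The argument specialises to the proof of Theorem~\ref{thm:mult2} when $p = 2$, where the two-step filtration corresponds exactly to the dichotomy $u^{(i)} \neq 0$ versus $u^{(i)} = 0$ used there.
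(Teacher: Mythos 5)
Your proof is correct. It is in essence the same argument as the paper's, but packaged ring-theoretically rather than via coordinate polynomials, and the two filtrations literally coincide: the paper defines $V_d$ as the span of vectors whose coordinate functions $\pi_i(v)\colon j\mapsto v_{nj+i}$ are polynomials of degree at most $d$ over $\F_p$, and since multiplication by $T=X^n-1$ acts as the finite difference operator on these functions, one has $V\cap T^{p-1-d}R=\{v\in V: T^{d+1}v=0\}=V_d$. Where the two proofs genuinely differ is in the key computation: the paper uses the test vector $y_{nj+i}=j^{p-1-d}x_i$ together with Lemma \ref{lem:poly_sum} ($\sum_{x\in\F_p}f(x)=0$ unless $\deg f=p-1$) to show that pairing extracts the leading coefficient $w^{(t)}_i$, whereas you use the test element $T^{p-1-j}y$ together with nilpotency of $T$ and the characteristic-$p$ identity $T^{p-1}=1+X^n+\dots+X^{(p-1)n}$ to get $vT^{p-1-j}y=T^{p-1}(uy)$ directly. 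Your version buys a cleaner one-line computation (no explicit change of summation index $\lambda$, no separate lemma on power sums) and makes the specialisation to Theorem \ref{thm:mult2} at $p=2$ transparent; the paper's version has the mild advantage of staying entirely in coordinates and not requiring the reader to track which quantities are well-defined modulo which power of $T$. One small imprecision worth fixing in a write-up: in the decomposition $v=T^ju+T^{j+1}r$ only $u$ is unique (it is $s\bmod T$ where $v=T^js$, well-defined because the annihilator of $T^j$ is $T^{p-j}R$ and $j\le p-1$); the remainder $r$ is not, but this does not affect the argument since only $u$ is used.
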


To prove Theorem \ref{thm:pboundforq}, we use the following lemma.
\begin{lemma}\label{lem:poly_sum}
Let $p$ be a prime, and let $q$ be a power of $p$. Suppose that $f$ is a polynomial of degree at most $p-1$ over $\F_q$. Then
\begin{equation}
    \sum_{x=0}^{p-1} f(x) = \begin{cases} 0 & \deg{f} < p-1 \\ -c_{p-1} & \deg{f} = p-1, \end{cases}
\end{equation}
where $c_{p-1}$ refers to the leading coefficient of $f$ when $f$ has degree $p-1$.
\end{lemma}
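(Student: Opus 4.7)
The plan is to reduce the claim to the computation of the power sums $S_k := \sum_{x=0}^{p-1} x^k$ for $0 \leq k \leq p-1$, viewed as elements of $\F_q$, and then establish the two-line formula
\[
S_k = \begin{cases} 0 & \text{if } 0 \leq k \leq p-2, \\ -1 & \text{if } k = p-1. \end{cases}
\]
Once this is done, writing $f(x) = \sum_{k=0}^{p-1} c_k x^k$ and using linearity gives $\sum_{x=0}^{p-1} f(x) = \sum_k c_k S_k$, and only the $k=p-1$ term contributes, yielding exactly the stated formula. So the work reduces to evaluating $S_k$.

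First I would handle the two easy endpoints. For $k=0$ we have $S_0 = p \cdot 1 = 0$ in $\F_q$, because $q$ is a power of $p$ so $\F_q$ has characteristic $p$. For $k=p-1$, Fermat's little theorem in $\F_p \subseteq \F_q$ gives $x^{p-1} = 1$ for every $x \in \{1,2,\dots,p-1\}$, so $S_{p-1} = (p-1)\cdot 1 + 0 = -1$ in $\F_q$.

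For the intermediate range $1 \leq k \leq p-2$, I would use that $\F_p^\ast$ is cyclic: pick a generator $g$ (so $g$ has multiplicative order $p-1$ in $\F_p^\ast \subseteq \F_q^\ast$). Then
\[
S_k = \sum_{x=1}^{p-1} x^k = \sum_{j=0}^{p-2} (g^k)^j = \frac{(g^k)^{p-1} - 1}{g^k - 1},
\]
which is valid provided $g^k \neq 1$ in $\F_q$. Since $1 \leq k \leq p-2$ and $g$ has order $p-1$, indeed $g^k \neq 1$; and the numerator $(g^{p-1})^k - 1 = 1 - 1 = 0$, so $S_k = 0$.

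There is essentially no main obstacle here: the only thing one has to be careful about is working in $\F_q$ rather than $\F_p$, but because $\F_p$ embeds in $\F_q$ as a subfield (so Fermat's little theorem and the cyclicity of $\F_p^\ast$ transfer verbatim) and because characteristic $p$ kills the factor of $p$ in the $k=0$ case, everything goes through cleanly.
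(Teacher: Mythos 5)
Your proof is correct and follows essentially the same route as the paper's: reduce to the power sums $S_k$ by linearity, dispose of $k=0$ and $k=p-1$ directly, and use a primitive root $g$ of $\F_p^\times$ together with $g^k\neq 1$ to kill the intermediate cases. The only (cosmetic) difference is that you evaluate $\sum_j (g^k)^j$ as a geometric series, whereas the paper derives $(\,\omega^r-1)S_r=0$ from the bijection $x\mapsto \omega x$ on $\F_p^\times$; both arguments rest on exactly the same facts.
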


\begin{proof}
Lemma \ref{lem:poly_sum} will follow from the special case of monomials. If $f$ is constant, then clearly $\sum_x f(x)=pf(0) = 0$. If $f(x) = x^r$ for some $0 < r < p-1$, then let $\omega \in \Z/p\Z$ be a primitive root. Then,
\[
\sum_x x^r = \sum_x (\omega x)^r = \omega^r \sum_x x^r,
\]
where the first equality follows from the fact that multiplying by $\omega$ is a bijection on $(\Z/p\Z)^\times$. Thus $(\omega^r - 1)\sum_x f(x) = 0,$ and so $\sum_x f(x) = 0$ as $\omega$ is a primitive root. 
Finally, observe that $\sum_x x^{p-1}$ = $\sum_{x \neq 0} 1 = p-1$.
\end{proof}

\begin{proof}[Proof of Theorem \ref{thm:pboundforq}]
Given a vector $v \in \F_q^{pn}$, we will define its \emph{degree} as follows.
For $i\in\{0,\dots,n-1\}$ and $v \in \F_q^{pn}$, let 
\begin{align*}
\pi_i(v):\F_p&\to \F_q\\
j &\mapsto v_{nj+i}.
\end{align*}
Any function taking $\F_p$ to $\F_q$ can be written uniquely as the restriction to $\F_p$ of a polynomial of degree at most $p-1$ over $\F_q$. Let $d_i$ be the degree of $\pi_i(v)$. 
The degree $d$ of $v$ is defined as $d = \max_i d_i$.

Suppose that $V \subseteq \F_q^{pn}$ is the vector space spanned by $h_q(np)$ vectors that are linearly independent and work together. For $d\in\{0,\dots,p-1\}$, let $V_d \leq V$ be the subspace consisting of all vectors of degree at most $d$. Since every vector has degree at most $p-1$, we immediately get that $V_{p-1} = V$.

Then, Theorem \ref{thm:pboundforq} will follow from the following claim.
\begin{claim*}
If $V_{-1}$ is defined to be the trivial subspace, then for each $0 \leq d \leq p-1$, $\dim{V_d} \leq \dim{V_{d-1}} + h_q(n)$.
\end{claim*}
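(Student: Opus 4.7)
The plan is to interpret the quotient $V_d / V_{d-1}$ as a subspace of $\F_q^n$ whose elements work together, so that its dimension is bounded by $h_q(n)$. Concretely, I define the linear map $L_d \colon V_d \to \F_q^n$ by setting $L_d(v)_i$ equal to the coefficient of $j^d$ in the polynomial representative of $\pi_i(v)$. Its kernel is exactly $V_{d-1}$, so $\dim V_d - \dim V_{d-1} = \dim L_d(V_d)$, and it suffices to show that $L_d(V_d)$ works together in $\F_q^n$: given any $y \in \F_q^n$, I need to produce $k \in \{0, \dots, n-1\}$ with $L_d(v) \cdot \sigma^k y = 0$ for every $v \in V_d$.

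Given such a $y$, I lift it to a test vector $\tilde{y} \in \F_q^{pn}$ via the tensor product construction
\[
    \tilde{y}_{nj+i} \;=\; j^{p-1-d}\, y_i \qquad (0 \le j \le p-1,\ 0 \le i \le n-1),
\]
where $j$ is identified with its image in $\F_p$. Because $V$ is spanned by vectors that work together, there exists $K \in \{0, \dots, pn-1\}$ with $v \cdot \sigma^K \tilde{y} = 0$ for every $v \in V$. Writing $K = n\ell + k$ with $0 \le k < n$ and $0 \le \ell < p$, the proof is complete once I verify the identity $v \cdot \sigma^K \tilde{y} = -\, L_d(v) \cdot \sigma^k y$ for every $v \in V_d$, since this forces $L_d(v) \cdot \sigma^k y = 0$ for all $v \in V_d$.

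To verify the identity, I expand $v \cdot \sigma^K \tilde{y}$ by splitting the sum over $a = nj+i$ according to whether $i \ge k$ (no wraparound when subtracting $K$) or $i < k$ (wraparound to the previous $n$-block, which after re-indexing contributes a $j \mapsto j+1$ shift). This yields
\[
    v \cdot \sigma^K \tilde{y} \;=\; \sum_{j \in \F_p} (j-\ell)^{p-1-d}\, Q_{v,k}(j),
\]
where $Q_{v,k}(j) = \sum_{i < n-k} \pi_{i+k}(v)(j)\, y_i + \sum_{i \ge n-k} \pi_{i+k-n}(v)(j+1)\, y_i$. For $v \in V_d$, each $\pi_i(v)$ has degree at most $d$, so $Q_{v,k}$ does too; and since the shift $j \mapsto j+1$ preserves the $j^d$-coefficient of a polynomial of degree $\le d$, a short computation identifies this coefficient of $Q_{v,k}$ with $\sum_i L_d(v)_{(i+k) \bmod n}\, y_i = L_d(v) \cdot \sigma^k y$. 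Thus $(j-\ell)^{p-1-d} Q_{v,k}(j)$ has degree at most $p-1$ with $j^{p-1}$-coefficient $L_d(v) \cdot \sigma^k y$, and Lemma \ref{lem:poly_sum} delivers the identity.

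The main obstacle is the careful index bookkeeping in the expansion above: one must track the $i < k$ wraparound correctly and notice that the resulting $j \mapsto j+1$ shift is harmless, because it does not change the leading coefficient of a polynomial of degree at most $d$. Once this is handled, the argument goes through uniformly for all $d \in \{0, \dots, p-1\}$, including the edge case $d = 0$ (where $\phi(j) = j^{p-1}$ is the indicator of $\F_p \setminus \{0\}$).
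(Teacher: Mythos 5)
Your proposal is correct and follows essentially the same route as the paper: the same lift $\tilde{y}_{nj+i}=j^{p-1-d}y_i$, the same appeal to Lemma \ref{lem:poly_sum} to extract the $j^d$-coefficient, and the same identification of that coefficient with $L_d(v)\cdot\sigma^k y$. The only differences are organizational — you phrase the dimension count via the map $L_d$ and rank–nullity instead of extending a basis of $V_{d-1}$, and you group the double sum by $j$ (absorbing the wraparound into a harmless $j\mapsto j+1$ shift) where the paper fixes $i$ and introduces the offset $\lambda$ — but the substance is identical.
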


\begin{claimproof}[Proof of Claim]
Suppose that $v^{(1)}, \dots, v^{(r)}$ are vectors in $V_{d}$ that extend a basis of $V_{d-1}$ to a basis of $V_{d}$; thus, $r = \dim{V_d} - \dim{V_{d-1}}$. We must prove that $r \leq h_q(n)$.

For $t\in [r]$, define $w^{(t)} \in \F_q^n$ such that $w^{(t)}_i$ is the coefficient of $X^d$ in the polynomial corresponding to $\pi_i(v^{(t)})$. We claim that $w^{(1)},\dots,w^{(r)}$ are linearly independent and work together, proving $r\leq h_q(n)$.

We first show they work together. Let $x \in \F_q^n$. We must prove that there is a shift $k$ such that $w^{(t)} \cdot \sigma^k x = 0$ for each $t\in [r]$.

Define the vector $y \in \F_q^{pn}$ by $y_{ nj + i} = j^{p-1-d} x_i$, for $i\in\{0,\dots,n-1\}$ and $j\in \{0,\dots,p-1\}$. Since the $v^{(t)}$ work together, there must be a shift $k$ with the property that $v^{(t)} \cdot \sigma^k y = 0$ for all $t\in [r]$. By definition, 
\begin{align*}
    0=v^{(t)} \cdot \sigma^k y &= \sum_{i = 0}^{n-1}\sum_{j = 0}^{p-1} v^{(t)}_{nj+i} y_{nj + i - k}.
\end{align*}
Now, we will prove that
\begin{equation}\label{eqn:v_w_relation}
    \sum_{j = 0}^{p-1} v^{(t)}_{nj+i} y_{nj + i - k} = - w^{(t)}_i x_{i-k}
\end{equation}
for each $t \in [r]$, where the index of $x_{i-k}$ is taken modulo $n$.

To prove this, note that by definition of $w^{(t)}$, we may write 
\[
v^{(t)}_{nj+i} = (\pi^{(t)}_i(v))(j) = w^{(t)}_i j^d + f(j)
\]
where $f$ is some polynomial of degree at most $d-1$. There is a unique $\lambda \in \Z/p\Z$ with the property that, for each $j \in \{0, \dots, p-1\}$,
\[
nj + i - k = n(j + \lambda) + \ell,
\]
where $\ell \in \{0, \dots, n-1\}$ satisfies $\ell \equiv i-k \mod n$. This gives that $y_{nj + i - k} = x_{i - k}(j + \lambda)^{p-1-d}$, and thus
\begin{align*}
    \sum_{j = 0}^{p-1} v^{(t)}_{nj+i} y_{nj + i - k} &= \sum_{j=0}^{p-1} \left(w^{(t)}_i j^d + f(j)\right) x_{i - k}(j + \lambda)^{p-1-d} \\
    &= -w^{(t)}_ix_{i-k}
\end{align*}
by Lemma \ref{lem:poly_sum}.

It follows that
\begin{align*}
    0=-v^{(t)} \cdot \sigma^k y &= \sum_{i = 0}^{n-1} w^{(t)}_ix_{i-k} = w^{(t)} \cdot \sigma^k x,
\end{align*}
for each $t\in [r]$. This proves that the $w^{(t)}$ work together.

It remains to show that the $w^{(t)}$ are linearly independent. Suppose otherwise; then, there must exist a linear relation
\[
\sum_{t=1}^r \lambda_t w^{(t)} = 0
\]
for some $\lambda_t \in \F_q$ not all zero.

By definition, this means that each $\pi_i(\sum_{t=1}^r \lambda_t v^{(t)}) : \F_p \rightarrow \F_q$ has the coefficient of $X^d$ equal to zero, and thus has degree at most $d-1$. In other words, $\sum_{t=1}^r \lambda_t v^{(t)} \in V_{d-1}$, contradicting the assertion that the $v^{(t)}$ extend a basis of $V_{d-1}$.
\end{claimproof}

Applying the claim, we conclude that
\begin{align*}
h_q(pn)=\dim V &= \sum_{d=0}^{p-1} \left(\dim{V_{d}} - \dim{V_{d-1}}\right) \leq p h_q(n). \qedhere
\end{align*}
\end{proof}

Since the cyclic shifts of a subspace $U \leq \mathbb{F}_q^n$ can only cover at most $|U| \cdot n$ vectors in $\mathbb{F}_q^n$, if $U$ is cyclically covering, we must have $|U| \geq q^n/n$. This implies $\dim(U) = \log_q(|U|) \geq n - \log_q(n)$, and $h_q(n) \leq \log_q(n)$. In particular, for $n<q$ we have $h_q(n) = 0$, and this gives the following corollary of Theorem \ref{thm:pboundforq}.
\begin{corollary}
Let $p$ be prime, and let $q$ be a power of $p$. Then $h_q(\ell p^d)=0$ for any $\ell <q$. 
\end{corollary}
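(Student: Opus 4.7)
The plan is to combine Theorem \ref{thm:pboundforq} with the easy observation that $h_q(n) = 0$ whenever $n < q$. Since $\ell < q$, the trivial counting bound $h_q(n) \leq \log_q(n)$ mentioned just before the corollary gives $h_q(\ell) \leq \log_q(\ell) < 1$, and as $h_q(\ell)$ is a non-negative integer this forces $h_q(\ell) = 0$.

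Now I would apply Theorem \ref{thm:pboundforq} inductively $d$ times to the base value $n=\ell$:
\[
h_q(\ell p^d) \leq p \cdot h_q(\ell p^{d-1}) \leq p^2 \cdot h_q(\ell p^{d-2}) \leq \dots \leq p^d \cdot h_q(\ell) = 0.
\]
Since $h_q$ is always non-negative, this yields $h_q(\ell p^d) = 0$, as required.

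There is no substantial obstacle here; the corollary is a direct two-line consequence of Theorem \ref{thm:pboundforq} together with the trivial upper bound. The only minor verification is that the hypothesis of Theorem \ref{thm:pboundforq} is met at each step, which is immediate because $q$ is assumed to be a power of the prime $p$ throughout.
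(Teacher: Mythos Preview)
Your proposal is correct and matches the paper's own argument exactly: the paper also derives $h_q(\ell)=0$ from the trivial bound $h_q(n)\le\log_q(n)$ for $\ell<q$, and then applies Theorem \ref{thm:pboundforq} iteratively $d$ times. There is nothing to add.
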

\begin{remark*}
In Theorem \ref{thm:pboundforq}, we require that the characteristic of the field $\F_q$ is the same $p$ as in the bound. For example, $h_3(2n) \leq 2h_3(n)$ fails for $n = 4$. Indeed, observe that $h_3(4) = 0$ (as noted in \cite{CameronEllisRaynaud18}), whereas $h_3(8) = 1$ follows from \cite[Theorem 5]{CameronEllisRaynaud18}.
\end{remark*}

For $q>2$, we have the following stronger version of Theorem \ref{thm:main_theorem}.
\begin{theorem}\label{thm:main_thm_hq}
Suppose that $q$ is an odd prime, and $p > q$ is a prime with $q$ as a primitive root. Then $h_q(p) = 0$; in other words, there are \emph{no} nonzero vectors $v \in \F_q^p$ that work.
\end{theorem}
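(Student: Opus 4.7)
The plan is to work with the polynomial formulation from Section \ref{sec:rootsandconj}. An analogue of Lemma \ref{lem:factors_of_cyclo} for general prime powers (see \cite[Theorem 2.47(ii)]{FiniteFieldsBook}) shows that when $q$ is a primitive root modulo $p$, the polynomial $\Phi(X) := 1 + X + \dots + X^{p-1}$ is irreducible over $\F_q$. Combined with $X^p - 1 = (X-1)\Phi(X)$, the Chinese Remainder Theorem yields a ring isomorphism
\[
\F_q[X]/(X^p - 1) \;\cong\; \F_q \;\oplus\; K, \qquad K := \F_q[X]/(\Phi) \cong \F_{q^{p-1}}.
\]
By the same argument as in the proof of Proposition \ref{prop:think_about_polys} (which goes through verbatim over $\F_q$), a vector $v$ fails to work if and only if there exists some $x \in \F_q^p$ such that every coefficient of $f_v f_x$ (reduced modulo $X^p - 1$) lies in $\F_q^\times$.

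Suppose for contradiction that $v \neq 0$ works, and write $f_v \leftrightarrow (a, b)$ with $a = f_v(1) \in \F_q$ and $b \in K$. A short computation gives $\Phi \cdot X \equiv \Phi \pmod{X^p - 1}$, so the ideal generated by $\Phi$ in $\F_q[X]/(X^p - 1)$ equals $\F_q \cdot \Phi$. Hence, if $b = 0$ then $f_v = c\Phi$ for some $c \in \F_q^\times$, so $v = c(1,\dots,1)$; but then $v \cdot \sigma^k e_0 = c \neq 0$ for every $k$, contradicting that $v$ works. Therefore $b \neq 0$, which means $f_v$ is invertible in $K$, and the image of the multiplication map $f_x \mapsto f_v f_x$ is precisely $a\F_q \oplus K$.

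It now suffices to exhibit some $h$ in this image all of whose coefficients lie in $\F_q^\times$; the corresponding $x$ (obtained via CRT by prescribing $f_x \bmod \Phi$) will then witness that $v$ does not work. If $a \neq 0$ we simply take $h = \Phi$. The main obstacle is the case $a = 0$: here $h$ must additionally satisfy $h(1) = 0$, so we are asking for a tuple $(c_0, \dots, c_{p-1}) \in (\F_q^\times)^p$ with $\sum_k c_k = 0$. This is exactly where the hypothesis $q \geq 3$ is essential: no such tuple exists for $q = 2$ and odd $p$, which is consistent with $h_2(p) = 2 \neq 0$. For odd $q$ a tuple is easy to produce: set $c_2 = \dots = c_{p-1} = 1$ and pick $c_0, c_1 \in \F_q^\times$ with $c_0 + c_1 = 2 - p$, which is possible because $\F_q^\times$ has at least two elements (so one can always avoid setting either coordinate to $0$). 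Producing such an $h$ finishes the contradiction and shows $h_q(p) = 0$.
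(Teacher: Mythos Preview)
Your proof is correct and follows essentially the same approach as the paper: both use the CRT decomposition $\F_q[X]/(X^p-1)\cong \F_q\oplus K$ and, for each possible shape of $f_v$, exhibit a multiple of $f_v$ with all coefficients in $\F_q^\times$. The only cosmetic differences are that you dispose of the $b=0$ case by a direct test vector on $v=c(1,\dots,1)$ (whereas the paper absorbs it into the ``$a\neq 0$'' divisibility argument), and in the $a=0$ case you build the zero-sum tuple $(c_0,c_1,1,\dots,1)$ abstractly, while the paper writes down the explicit product $(X-1)(X^{p-2}+X^{p-4}+\dots+X-1)=1-2X+X^2+\sum_{i\ge 3}(-X)^i$.
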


\begin{remark*}
It is known by a result of Heath-Brown \cite{HeathBrown86} that for all but at most two primes $q$, there are infinitely many primes $p$ such that $q$ is a primitive root of $p$. In particular, we know unconditionally that for all but two primes $q$, $h_q(n)$ does not tend to infinity among $n$ coprime to $q$. It is widely believed that there are no primes $q$ that are primitive roots for only finitely many $p$; if this were true, we would know that $h_q(n)$ does not tend to infinity for any $q$ (among $n$ coprime to $q$).
\end{remark*}

\begin{proof}
Suppose that $v \in \F_q^p$ is nonzero. We consider the polynomial $f_v \in \F_q[X] /(X^p - 1)$ which corresponds to a vector $v$, after making the natural modifications to the definition given in Section \ref{sec:rootsandconj}. An analogue of Proposition \ref{prop:think_about_polys} holds over $\F_q$, with the same proof. Thus, $v$ works if and only if, for every $x \in \F_q^p$, there is a $k$ such that the coefficient of $X^k$ in $f_v f_x$ is $0$. That is  $v$ fails to work if and only if there exists $x$ such that $f_v f_x$ has no zero coefficients. In other words, $v$ fails to work if and only if $f_v$ is a factor of a polynomial with no zero coefficients.

Since $q$ is a primitive root for the prime $p$, the analogue of Lemma \ref{lem:factors_of_cyclo} tells us that $X^p - 1$ factors into irreducible polynomials as 
\[
X^p - 1 = (X - 1)(1 + X + \dots + X^{p-1}).
\]

Then we can write  \[\frac{\F_q[X]}{( X^p - 1 )} =  \frac{\F_q[X]}{( X - 1 )} \oplus \frac{\F_q[X]}{( 1 + X + X^2 + \dotsb + X^{p-1} )}.\] 
Suppose $f_v$ is non-zero in $\F_q[X]/( X^p - 1 )$. Write $a \in \F_q[X]/(X-1)$ for $f_v \mod X-1$ and $b \in \F_q[X]/(1 + X + \dots + X^{p-1})$ for $f_v \mod 1 + X + \dots + X^{p-1}$. The assertion that $f_v$ is nonzero is then equivalent to the assertion that either $a$ or $b$ is nonzero.

Suppose first that $a$ is nonzero. Since $\F_q[X]/(X - 1)$ is a field, there is an inverse $a^{-1}$ for $a$. Let $c = 1 + X + \dots + X^{p-1} \mod X-1$, and let $g \in \F_q[X]/(X^p - 1)$ denote the polynomial that is $ca^{-1} \mod X-1$ and $0 \mod 1 + X + \dots + X^{p-1}$. Thus, $f_vg$ is $c \mod X-1$ and $0 \mod 1 + X + \dots + X^{p-1}$, and so is equal to $1 + X + \dots + X^{p-1} \mod X^p - 1$. In particular, $f_v$ is a factor of $1 + X + \dots + X^{p-1}$.

On the other hand, suppose that $a$ is zero. Then $b$ must be nonzero, and a similar argument shows that $f_v$ is a factor of $X - 1$.
Thus, any nonzero polynomial in $\F_q[X]/( X^p - 1 )$ is a factor of either $X - 1$ or $1 + \dots + X^{p-1}$.

Now, $1 + \dots + X^{p-1}$ itself has no coefficient equal to zero. Also, $X-1$ is a factor of
\[(X-1) (X^{p-2} + X^{p-4} + \dotsb + X - 1) = 1 - 2X + X^2 + \sum_{i=3}^{p-1} (-X)^i,\]
which also has no coefficient equal to zero.

Thus, any nonzero polynomial is a factor of a polynomial with no coefficient equal to zero, and so any nonzero vector does not work.
\end{proof}

\section{Conclusion}
\label{sec:concl}
Let $\ord_p(2)$ denote the order of $2$ in $\left(\Z/p\Z\right)^{\times}$.
We have shown that $h_2(p)\leq 2$ for all Artin primes $p$, where Artin primes are exactly the primes satisfying $\ord_p(2)=p-1$. It would be interesting to see whether $h_2(p)$ is still small if $p$ is ``almost'' an Artin prime. For example, is there a function $f:\N \to \N$ such that $h_2(p)\leq f\left(\frac{p-1}{\ord_p(2)}\right)$?

Since we now know $2$ appears infinitely often in the multiset $\{h_2(n) : n\in \N\}$ (assuming Artin's conjecture is true), another interesting direction for future work is to see which other numbers appear infinitely often. 
\begin{problem}
For which $k\in \N$ are there infinitely many $n$ such that $h_2(n)=k$?
\end{problem}
The lower bound in Theorem \ref{thm:sum_mult_bound} suggests that one might at least expect there to be infinitely many such $k$. Indeed, a bound on $h_2(3n)$ which is only a function of $h_2(n)$ would suffice for this. To this end we propose the following problem.
\begin{problem}
For which $k \in \N$ is there a function $f_k : \mathbb{N} \to \mathbb{N}$ such that $h_2(kn)\leq f_k(h_2(n))$ for all $n\in \N$?
\end{problem}

From our results, it follows that $h_2(a_n)\to \infty$ whenever the number of odd prime factors of $a_n$ tends to infinity. Our computer searches show that the sequence $h_2(3 \cdot 2^n)$ begins 1, 2, 3, 3, but we are unable to determine for which $n$ we have $h_2(3 \cdot 2^{n}) < h_2(3 \cdot 2^{n+1})$. It would be interesting to determine for which $x$ we have $h_2(2x)>h_2(x)$. We are not even able to determine if $h_2(3\cdot 2^n)\to \infty$ as $n \to \infty$, and more generally, we pose the following question.
\begin{problem}
For which $k\in \N$ do we have $h_2(k \cdot 2^n)\to \infty$?
\end{problem}

We remark that we also leave open our own conjecture that the only vectors that work with $\e$ are the symmetric vectors (Conjecture \ref{conj:our_in_intro}).
We have verified that Conjecture \ref{conj:our_in_intro} holds for all odd $n$ with $3 \leq n \leq 43$, if $7 \nmid n$.
In the case that $n = 7$, there are precisely 12 non-symmetric vectors $v$ that work with $\e$. The first of these is the example $v = (0,1,1,0,0,0,0)$ given in Observation \ref{obs:counterexample_7}, and the other 11 can derived from this by scaling by non-zero $\ell$ (in other words, replacing $x_i$ by $x_{\ell i}$ for every $i$), replacing $v$ with $v + \e$, and combinations of the two. For $n = 21$ and $n = 35$, the only non-symmetric vectors $v$ that work with $\e$ may be constructed from the $n=7$ case in a systematic way.

Finally, the following problem posed by Cameron, Ellis and Raynaud \cite{CameronEllisRaynaud18}, to which we have shown some partial progress, remains very interesting.
\begin{problem}
For which $n$ is $h_q(n)=0$?
\end{problem}

\paragraph{Acknowledgements}
We would like to thank the referees for their valuable feedback which vastly improved the readability of this paper.

\bibliographystyle{plain}
\bibliography{cyclicallycovering}

\begin{thebibliography}{10}

\bibitem{Cameron94}
P.J. Cameron.
\newblock {Problems from the Thirteenth British Combinatorial Conference}.
\newblock {\em Discrete Mathematics}, 125:407--417, 1994.

\bibitem{CameronEllisRaynaud18}
P.J. {Cameron}, D.~{Ellis}, and W.~{Raynaud}.
\newblock {Smallest cyclically covering subspaces of $\mathbb{F}_q^n$, and
  lower bounds in Isbell's conjecture}.
\newblock {\em arXiv:1810.03485v2}, 2019.

\bibitem{CameronFranklKantor89}
P.J. Cameron, P.~Frankl, and W.M. Kantor.
\newblock Intersecting families of finite sets and fixed-point-free 2-elements.
\newblock {\em {European Journal of Combinatorics}}, 10(2):149--160, 1989.

\bibitem{GrynkBook}
D.J. Grynkiewicz.
\newblock {\em Structural additive theory}, volume~30 of {\em Developments in
  Mathematics}.
\newblock Springer, Cham, 2013.

\bibitem{hamidoune}
Y.O. Hamidoune and \O.J. R{\o}dseth.
\newblock An inverse theorem mod {$p$}.
\newblock {\em Acta Arithmetica}, 92(3):251--262, 2000.

\bibitem{HeathBrown86}
D.R. Heath-Brown.
\newblock {Artin's conjecture for primitive roots}.
\newblock {\em The Quarterly Journal of Mathematics}, 37(1):27--38, 1986.

\bibitem{Hooley67}
C.~Hooley.
\newblock {On Artin's conjecture.}
\newblock {\em {Journal f\"{u}r die reine und angewandte Mathematik}},
  225:209--220, 1967.

\bibitem{FiniteFieldsBook}
R.~Lidl and H.~Niederreiter.
\newblock {\em Finite fields}, volume~20 of {\em Encyclopedia of Mathematics
  and its Applications}.
\newblock Cambridge University Press, Cambridge, second edition, 1997.

\bibitem{SerraZemor}
O.~Serra and G.~Z\'{e}mor.
\newblock Large sets with small doubling modulo {$p$} are well covered by an
  arithmetic progression.
\newblock {\em Ann. Inst. Fourier (Grenoble)}, 59(5):2043--2060, 2009.

\bibitem{TaoVu}
T.~Tao and V.H. Vu.
\newblock {\em Additive combinatorics}, volume 105 of {\em Cambridge Studies in
  Advanced Mathematics}.
\newblock Cambridge University Press, Cambridge, 2010.

\end{thebibliography}
\appendix

\section{Known bounds on \texorpdfstring{$h_2(n)$}{h2(n)} for small \texorpdfstring{$n$}{n}}
\label{sec:comp_exp}
The following two theorems of Cameron, Ellis and Raynaud provide several exact values of $h_2(n)$.
\begin{theorem}[{\cite[Theorem 5]{CameronEllisRaynaud18}}]
\label{thm:CERqdminus1}
If $q$ is a prime power and $d \in \N$, then
$h_q(q^d-1) = d-1$.
\end{theorem}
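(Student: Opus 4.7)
I would prove the two inequalities separately. The upper bound $h_q(q^d-1) \leq d-1$ follows from a simple volume argument: if $U \leq \F_q^n$ is cyclically covering, then since $\bigcup_{i=0}^{n-1} \sigma^i(U) = \F_q^n$, we must have $|U| \cdot n \geq q^n$, so $\dim U \geq n - \log_q n$. Applied to $n = q^d - 1$ this gives codimension at most $\log_q(q^d-1) < d$, hence at most $d-1$. The substantive content is thus the matching lower bound, and the plan is to construct $d-1$ explicit linearly independent vectors in $\F_q^{q^d-1}$ that work together.

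\textbf{The construction.} Fix a primitive element $\alpha$ of $\F_{q^d}$, so that $\alpha^0, \alpha^1, \dots, \alpha^{q^d-2}$ enumerate $\F_{q^d}^\times$; identify coordinates of $\F_q^{q^d-1}$ with these powers via $i \leftrightarrow \alpha^i$. Choose any $d-1$ linearly independent $\F_q$-linear maps $L_1, \dots, L_{d-1} \colon \F_{q^d} \to \F_q$, and define $v^{(j)} \in \F_q^{q^d-1}$ by $v^{(j)}_i = L_j(\alpha^i)$. The key computation is that for any $x \in \F_q^{q^d-1}$, setting $y = \sum_{i} x_i \alpha^i \in \F_{q^d}$, linearity of $L_j$ gives
\[
v^{(j)} \cdot \sigma^k x = \sum_{i} L_j(\alpha^{i+k}) x_i = L_j\!\left(\alpha^k y\right).
\]

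\textbf{Verification of working together.} If $y = 0$ then every shift works trivially. Otherwise, as $k$ ranges over $\{0,1,\dots,q^d-2\}$, the element $\alpha^k y$ ranges over all of $\F_{q^d}^\times$. Since $L_1,\dots,L_{d-1}$ are $d-1$ linearly independent linear forms on the $d$-dimensional $\F_q$-space $\F_{q^d}$, their common kernel is a one-dimensional subspace with $q-1 \geq 1$ nonzero elements. Hence we can choose $k$ so that $\alpha^k y$ lies in this common kernel, making all inner products vanish simultaneously.

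\textbf{Linear independence.} Finally, I would check that the $v^{(j)}$ are linearly independent: any relation $\sum_j c_j v^{(j)} = 0$ forces $(\sum_j c_j L_j)(\alpha^i) = 0$ for all $i$, and since the $\alpha^i$ span $\F_{q^d}$ over $\F_q$, this forces the linear form $\sum_j c_j L_j$ to vanish identically, whence $c_j = 0$ for all $j$ by independence of the $L_j$. There is no serious obstacle here; the only nontrivial ingredient is the bijection between $\F_q$-linear forms on $\F_{q^d}$ and the multiplicative-cyclic structure of $\F_{q^d}^\times$, and the proof essentially writes itself once one identifies coordinates of $\F_q^{q^d-1}$ with $\F_{q^d}^\times$ via a primitive root.
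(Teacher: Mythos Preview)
The paper itself does not prove this statement: it is quoted without proof in the appendix as a result of Cameron, Ellis and Raynaud, used only to populate the table of known values. So there is no in-paper proof to compare against.

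Your argument is nonetheless correct. The upper bound is exactly the counting bound recorded in the paper as Theorem~\ref{thm:CERlog}. For the lower bound, your identification of coordinates $\{0,\dots,q^d-2\}$ with $\F_{q^d}^\times$ via a primitive element $\alpha$, together with $d-1$ independent $\F_q$-linear forms $L_j$, is the standard construction; the key identity $v^{(j)}\cdot\sigma^k x = L_j(\alpha^k y)$ with $y=\sum_i x_i\alpha^i$ is correctly derived, and the existence of a good shift follows because $\{\alpha^k y : k\}=\F_{q^d}^\times$ meets the one-dimensional common kernel of the $L_j$. The linear independence check is also fine. This is, in essence, the proof given in the original Cameron--Ellis--Raynaud paper, where the cyclically covering subspace is described via the trace map (or equivalently via linear forms) on $\F_{q^d}$; you have reconstructed the source argument rather than anything new in the present paper.
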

\begin{theorem}[{\cite[Theorem 8]{CameronEllisRaynaud18}}]
\label{thm:CERsum}
Let $q$ be a prime power, and let $k, d \in \N$ such that $\gcd(d+ 1, q^k-1) = 1$. Set $n = \sum_{r=0}^d q^{kr} = \frac{q^{k(d+1)}-1}{q^k-1}$. 
Then $h_q(n) = kd$.
\end{theorem}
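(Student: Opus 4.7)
The plan is to prove both directions, with the upper bound $h_q(n) \leq kd$ coming from the volume bound and the lower bound $h_q(n) \geq kd$ from an explicit algebraic construction in $F = \F_{q^{k(d+1)}}$. For the upper bound, I would use the fact recalled in Section~\ref{sec:hqn} that any cyclically covering subspace $U \leq \F_q^n$ satisfies $|U| \cdot n \geq q^n$, so the codimension is at most $\log_q n$. Bounding
\[
n \;=\; \sum_{r=0}^{d} q^{kr} \;<\; \frac{q^{k(d+1)}}{q^k - 1} \;\leq\; 2\, q^{kd} \;\leq\; q^{kd+1},
\]
gives $\log_q n < kd + 1$, and since the codimension is an integer, $h_q(n) \leq kd$.

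For the lower bound, let $K = \F_{q^k} \subset F$ and pick $\beta \in F^*$ of multiplicative order $n$, which exists because $n \mid q^{k(d+1)} - 1 = |F^*|$. Define the $\F_q$-linear map $\psi : \F_q^n \to F$ by $\psi(v) = \sum_{i=0}^{n-1} v_i \beta^i$ and set $U = \psi^{-1}(K)$. The key equivariance $\psi(\sigma v) = \beta \, \psi(v)$ is immediate from the definition of the cyclic shift. Assuming the $\F_q$-degree of $\beta$ is exactly $k(d+1)$, so that the image of $\psi$ is all of $F$, the codimension of $U$ in $\F_q^n$ is $\dim_{\F_q} F - \dim_{\F_q} K = k(d+1) - k = kd$.

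To show $U$ is cyclically covering, I would use the coprimality hypothesis as follows. Reducing $n = 1 + q^k + \cdots + q^{kd}$ modulo $q^k - 1$ term-by-term gives $n \equiv d+1 \pmod{q^k - 1}$, so $\gcd(n, q^k - 1) = \gcd(d+1, q^k - 1) = 1$. This means the subgroups $K^*$ and $\langle \beta \rangle$ of $F^*$ intersect trivially, and since their orders multiply to $(q^k - 1) \cdot n = |F^*|$, we obtain $F^* = K^* \cdot \langle \beta \rangle$ as an internal direct product. Consequently every nonzero $\alpha \in F^*$ factors as $\alpha = \kappa \beta^j$ with $\kappa \in K^*$; applying this to $\alpha = \psi(v)$ yields $\psi(\sigma^{-j} v) = \beta^{-j} \psi(v) = \kappa \in K$, so $\sigma^{-j} v \in U$. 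If $\psi(v) = 0$ then $v \in \ker \psi \subseteq U$ already.

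The main delicate point is verifying that the order of $q$ modulo $n$ really is $k(d+1)$, which is required to conclude $\F_q[\beta] = F$ and hence that $\psi$ is surjective. The argument should run as follows: if $q^m \equiv 1 \pmod n$ for some proper divisor $m$ of $k(d+1)$, then $n \leq q^m - 1$ forces $m > kd$; but any proper divisor of $k(d+1)$ is at most $k(d+1)/2$, and $k(d+1)/2 \leq kd$ whenever $d \geq 1$, giving a contradiction. Once this is established, everything slots together and we get $h_q(n) = kd$.
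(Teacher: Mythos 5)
The paper does not prove this statement; it is quoted from Cameron, Ellis and Raynaud as a black box (and their argument is essentially the same field-theoretic construction you give). Your proof is correct and complete: the volume bound $h_q(n)\leq\lfloor\log_q n\rfloor$ together with $n< q^{kd+1}$ gives the upper bound; and for the lower bound, the shift-equivariance $\psi(\sigma v)=\beta\psi(v)$, the verification that $\ord_n(q)=k(d+1)$ (so $\psi$ is onto and $U=\psi^{-1}(\F_{q^k})$ has codimension $kd$), and the coprime factorisation $F^*=K^*\cdot\langle\beta\rangle$ coming from $\gcd(n,q^k-1)=\gcd(d+1,q^k-1)=1$ all check out.
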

In addition, we make use of the following bounds.
\begin{theorem}[{\cite[Lemma 4]{CameronEllisRaynaud18}}]
\label{thm:CERlog}
For $q$ a prime power and $n \in  \mathbb{N}$, we have $h_q(n) \leq \floor{\log_q(n)}$.
\end{theorem}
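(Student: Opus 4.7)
The proof will be a straightforward counting argument, of exactly the type hinted at in Section \ref{sec:hqn} when the bound $h_q(n)\leq \log_q(n)$ is invoked in passing.

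The plan is to exploit the fact that a cyclically covering subspace $U\leq \F_q^n$ must, together with its $n$ cyclic shifts, cover all of $\F_q^n$, and then to apply a union bound on cardinalities. More precisely, I would start from a cyclically covering subspace $U$ realising the maximal codimension $h_q(n)$. By the definition of ``cyclically covering'',
\[
\F_q^n=\bigcup_{i=0}^{n-1}\sigma^i(U),
\]
and since each $\sigma^i$ is a bijection on $\F_q^n$, every set $\sigma^i(U)$ has the same cardinality $|U|$. Taking cardinalities and using the union bound gives $q^n=|\F_q^n|\leq n\,|U|$, i.e.\ $|U|\geq q^n/n$.

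Next, I would translate this cardinality bound into a dimension bound. Since $U$ is an $\F_q$-subspace, $|U|=q^{\dim U}$, so the inequality $q^{\dim U}\geq q^n/n$ yields $\dim U\geq n-\log_q n$, and hence
\[
\operatorname{codim} U = n-\dim U\leq \log_q n.
\]
Taking the maximum over all cyclically covering $U$ gives $h_q(n)\leq \log_q n$, and since $h_q(n)$ is an integer this at once improves to $h_q(n)\leq \lfloor\log_q n\rfloor$, as required.

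There is essentially no obstacle here: the only ingredients are that cyclic shifts are bijections preserving $|U|$, that a union of $n$ sets has size at most $n$ times the maximum size, and that subspace cardinalities are powers of $q$. The only micro-subtlety is the final step of passing from $\log_q n$ to $\lfloor\log_q n\rfloor$, which is immediate from the integrality of the codimension.
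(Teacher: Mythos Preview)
Your proof is correct and is essentially identical to the counting argument the paper itself sketches in Section~\ref{sec:hqn}: a cyclically covering $U$ together with its $n$ shifts must cover $\F_q^n$, so $n|U|\geq q^n$, whence $\operatorname{codim} U\leq \log_q n$ and integrality gives the floor.
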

\begin{theorem}[{\cite[Lemma 2]{CameronEllisRaynaud18}}]
\label{thm:CERlower2}
For odd positive integers $n > 3$, we have $h_2(n) \geq 2$.
\end{theorem}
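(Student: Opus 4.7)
The plan is to exhibit two linearly independent vectors in $\F_2^n$ that work together; by the equivalence established after the statement of Problem 1, this will give a cyclically covering subspace of codimension $2$, hence $h_2(n) \geq 2$.

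First, I would take the vector $\e = (0,1,1,\dots,1)$. Since $n$ is odd, $|\e| = n-1$ is even, so Theorem \ref{thm:elts_that_work} tells us that $\e \in W(n)$, i.e., $\e$ works. Next, I would take $v = e_1 + e_{n-1}$. This vector satisfies $v_0 = 0$ and is symmetric, since $v_1 = v_{-1} = 1$ and $v_i = v_{-i} = 0$ for all other $i \neq 0$. Proposition \ref{prop:all_sym_work} then guarantees that $\e$ and $v$ work together.

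It remains to verify that $\e$ and $v$ are linearly independent over $\F_2$. Since $v$ has weight $2$ while $\e$ has weight $n - 1 \geq 4$ (using $n > 3$ and $n$ odd, so $n \geq 5$), we have $v \neq 0$ and $v \neq \e$, so the two vectors are linearly independent in $\F_2^n$. This yields the required pair, completing the proof. There is no real obstacle here once Theorem \ref{thm:elts_that_work} and Proposition \ref{prop:all_sym_work} are in hand; indeed, the only role of the hypothesis $n > 3$ is to ensure $v \neq \e$, since for $n = 3$ the symmetric weight-$2$ vector $e_1 + e_2$ coincides with $\e$.
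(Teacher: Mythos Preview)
Your argument is correct. The paper does not give its own proof of this statement; it is quoted from \cite{CameronEllisRaynaud18} and used only as input to Table~\ref{tab:1}. That said, your derivation via Proposition~\ref{prop:all_sym_work} is entirely valid and pleasantly self-contained within the paper's framework: the symmetric vector $v=e_1+e_{n-1}$ satisfies $v_0=0$, so Proposition~\ref{prop:all_sym_work} gives that $\e$ and $v$ work together, and for odd $n\ge 5$ the two vectors are visibly linearly independent. The original reference instead writes down an explicit codimension-$2$ subspace, whereas you work in the orthogonal-complement formulation developed here; both yield the same bound with essentially the same effort.
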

Combining these results with our results and brute force calculations, we obtain the lower bounds $\ell$ and upper bounds $u$ on $h_2(n)$ for small values of $n$ given in Table \ref{tab:1}.

\begin{table}[ht]
\renewcommand{\arraystretch}{1.4}
\centering
\begin{tabular}{|c|c|c|c|}
\hline
$n$ & $(\ell, u)$& Reason for $\ell$ & Reason for $u$\\\hline
  1 & (0,0) & & \\
  2 & (0,0) &                                  &         Theorem \ref{thm:mult2} \\
  3 & (1,1) &                       Theorem \ref{thm:CERsum} &        Theorem \ref{thm:CERlog}\\
  4 & (0,0) &                                 &          Theorem \ref{thm:mult2}\\
  5 & (2,2) &                       Theorem \ref{thm:CERsum}&   Theorem \ref{thm:main_theorem}\\
  6 & (2,2) &    Brute force  &         Theorem \ref{thm:mult2}\\
  7 & (2,2) &                       Theorem \ref{thm:CERsum} &        Theorem \ref{thm:CERlog}\\
  8 & (0,0) &                                   &        Theorem \ref{thm:mult2}\\
  9 & (3,3) &                       Theorem \ref{thm:CERsum} &        Theorem \ref{thm:CERlog}\\
 10 & (2,2) &    Theorem \ref{thm:sum_mult_bound} &       Brute force\\
 11 & (2,2) &                    Theorem \ref{thm:CERlower2} &  Theorem \ref{thm:main_theorem}\\
 12 & (3,3) &                      Brute force &        Theorem \ref{thm:CERlog}\\
 13 & (2,2) &                    Theorem \ref{thm:CERlower2} &  Theorem \ref{thm:main_theorem} \\
 14 & (3,3) &                      Brute force &        Theorem \ref{thm:CERlog}\\
 15 & (3,3) &                       Theorem \ref{thm:CERsum} &        Theorem \ref{thm:CERlog}\\
 16 & (0,0) &                                  &         Theorem \ref{thm:mult2}\\
 17 & (4,4) &                       Theorem \ref{thm:CERsum} &        Theorem \ref{thm:CERlog}\\
 18 & (3,3) &    Theorem \ref{thm:sum_mult_bound} &       Brute force\\
 19 & (2,2) &                    Theorem \ref{thm:CERlower2} &  Theorem \ref{thm:main_theorem}\\
 20 & (3,3) &                      Brute force &       Brute force\\
 21 & (3,4) &    Theorem \ref{thm:sum_mult_bound} &        Theorem \ref{thm:CERlog}\\
 22 & (2,4) &   Theorem \ref{thm:sum_mult_bound}  &         Theorem \ref{thm:mult2}\\
 23 & (3,3) &                      Brute force &       Brute force\\
 24 & (3,3) &   Theorem \ref{thm:sum_mult_bound}  &        Brute force\\
 25 & (4,4) &    Theorem \ref{thm:sum_mult_bound} &        Theorem \ref{thm:CERlog}\\
 26 & (2,4) &   Theorem \ref{thm:sum_mult_bound}  &         Theorem \ref{thm:mult2}\\
 27 & (4,4) &    Theorem \ref{thm:sum_mult_bound} &        Theorem \ref{thm:CERlog}\\
 28 & (3,4) &   Theorem \ref{thm:sum_mult_bound}  &        Theorem \ref{thm:CERlog}\\
 29 & (2,2) &                    Theorem \ref{thm:CERlower2} &  Theorem \ref{thm:main_theorem}\\
 \hline
 \end{tabular}
 \caption{The upper and lower bounds on $h_2(n)$ we are aware of, along with their respective sources, are given for $n\in \{1,\dots,29\}$.}
   \label{tab:1}
\end{table}

\end{document}